\documentclass[12pt]{article}

\usepackage[margin=1.17in]{geometry}

\usepackage{amssymb,amsmath}
\usepackage{amsthm,enumitem}
\usepackage{hyperref}
\usepackage{mathrsfs}
\usepackage{textcomp}
\hypersetup{
colorlinks,%
citecolor=black,%
filecolor=black,%
linkcolor=black,%
urlcolor=black
}
\usepackage[abbrev,nobysame]{amsrefs}
\usepackage{todonotes}

\usepackage{verbatim}

\usepackage{sectsty}

\makeatletter

\newdimen\bibspace
\makeatother

\makeatletter

\newtheorem{Theorem}{Theorem}[section]
\newtheorem*{Theorem*}{Theorem A}
\newtheorem{Lemma}[Theorem]{Lemma}

\newtheorem*{Assumption*}{Assumption (H)}
\newtheorem*{Assumption**}{Assumption (G)}

\def\XXint#1#2#3{{\setbox0=\hbox{$#1{#2#3}{\int}$}
\vcenter{\hbox{$#2#3$}}\kern-.5\wd0}}

\newcommand{\Om}{\Omega}                
           \newcommand{\ud}{\mathrm{d}}
\newcommand{\be}{\begin{equation}}      \newcommand{\ee}{\end{equation}}

\newcommand{\A}{\mathcal{A}}
\newcommand{\LL}{\mathcal{L}}
\newcommand{\C}{\mathcal{C}}

\newcommand{\R}{\mathbb{R}}              
\newcommand{\D}{\mathcal{D}}

\newcommand{\M}{\mathscr{M}}

\newcommand{\abs}[1]{\lvert#1\rvert}

\begin{document}

\title{\textbf{Sharp global Alexandrov estimates and entire solutions of Monge-Amp\`ere equations}\bigskip}

\author{\medskip  Tianling Jin,\footnote{T. Jin was partially supported by NSFC grant 12122120, and Hong Kong RGC grants GRF 16304125, GRF 16303624 and GRF 16303822.}\quad Xushan Tu, \quad
Jingang Xiong\footnote{J. Xiong was partially supported by NSFC grants 12325104.}}

\date{}

\maketitle

\begin{center}
\vspace{-1cm}
\emph{{\small Dedicated to YanYan Li on the occasion of his 65th birthday, with \\ admiration and friendship.\medskip}}
\end{center}

\begin{abstract}  
This paper continues our work \cite{jin2025abp} on sharp Alexandrov estimates. We obtain a sharp global uniform distance estimate from a convex function to the class of unimodular convex quadratic polynomials in terms of the total variation of its Monge-Amp\`ere defect measure relative to Lebesgue measure. The estimate has an explicit optimal constant, and the inequality is strict in the regime of positive finite defect mass. In this regime we further prove asymptotic rigidity at infinity: every such convex function admits a unique quadratic asymptote with an explicit convergence rate, and satisfies a sharp affine invariant global Alexandrov estimate with equality if and only if the function solves the isolated singularity problem or the hyperplane obstacle problem. Standard subsolution methods are not well suited to this measure-theoretic setting and typically do not yield sharp constants, while the sharp Alexandrov estimates developed in our earlier work \cite{jin2025abp} play a central role here.

As an application, for entire solutions of Monge-Amp\`ere equations with multiple (possibly infinitely many) isolated singularities, we give an explicit quantitative mass-separation condition ensuring strict convexity and hence smoothness away from the set of the isolated singularities. 
\medskip

\noindent{\it Keywords}: Alexandrov estimates, Monge-Amp\`ere equation, entire solution, obstacle problem, isolated singularity.

\medskip

\noindent {\it MSC (2010)}: Primary 35B25; Secondary 35J96, 35R35.

\end{abstract}

\section{Introduction} 

In this paper, we begin by considering the following question for entire convex functions  $u: \R^n\to \R$. If $u$ stays finitely far away from some convex quadratic polynomial at infinity, how can one bound,  in the $L^\infty(\R^n)$ norm, the distance from $u$ to the class of convex quadratic polynomials?

We study this question in terms of the Monge-Amp\`ere measure associated with $u$. 
For a convex function $u:\R^n\to\R$, we denote by $\M u$ its associated Monge-Amp\`ere measure. We write $\LL$ as the Lebesgue measure on $\R^n$. For a signed Borel measure $\mu$ on $\R^n$, $|\mu|$ denotes its total variation measure, and $|\mu|(\R^n)$ its total mass.

Let $\A_n$ denote the set of positive definite symmetric $n \times n$ matrices with determinant $1$, and let $\omega_n$ be the Lebesgue measure of the unit ball in $\R^n$. Define
\begin{equation}\label{eq:def dn00}
d_{n,0}: =   \frac{\Gamma\left(\frac{1}{n}\right)\Gamma\left(\frac{n-2}{n}\right)}{2n\Gamma\left(\frac{n-1}{n}\right)}.
\end{equation}

Our first result is a sharp global deviation bound.
\begin{Theorem}\label{thm:global deviation} 
Suppose $n \ge 3$.  
Then for every entire convex function $u:\R^n\to\R$, we have
\begin{equation}\label{eq:global deviation}
\inf_{A \in \A_n, b \in \mathbb{R}^n, c \in \mathbb{R}}\left\| u - \left(\frac{1}{2}x^{\top}Ax  + b \cdot x+c\right) \right\|_{L^{\infty}(\R^n)} \leq 2^{-\frac{2}{n}} d_{n,0}  \left(\omega_n^{-1}|\M u-\LL|(\R^n)\right)^\frac{2}{n}.
\end{equation}  
Moreover, the inequality is strict whenever $0<|\M u-\LL|(\R^n)<\infty$, and the constant $2^{-\frac{2}{n}} d_{n,0}$ is sharp for each fixed value of $|\M u-\LL|(\R^n)$ in $(0,\infty)$.
\end{Theorem}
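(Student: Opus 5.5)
We reduce the statement to two extremal model problems and then compare a general $u$ with them. Write $\mu=\M u-\LL=\mu^+-\mu^-$ with $m_\pm=\mu^\pm(\R^n)$ and $m=m_++m_-$; assume $0<m<\infty$, since $m=\infty$ is trivial and $m=0$ forces $u$ to be a quadratic polynomial by J\"orgens--Calabi--Pogorelov, the Alexandrov solution being smooth. The first step is asymptotics. Since $\M u$ differs from $\LL$ by a finite measure and $n\ge3$, Caffarelli--Li type results for exterior Monge--Amp\`ere problems give $A\in\A_n$, $b$ and $c$ with $u-Q=O(|x|^{2-n})$ at infinity, where $Q=\frac12x^\top Ax+b\cdot x+c$. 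Precomposing with an affine map of determinant one, we may take $Q=\frac12|x|^2$. The key structural fact is that $u-Q$ tends to $0$ at infinity; without it the $L^\infty$ bound would fail.

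The second step treats the two model problems. If only positive defect is present, the extremal configuration is the isolated singularity problem $\M w=\LL+m_+\delta_0$. Its radial solution $w=\frac12|x|^2+\phi(|x|)$ is given by the ODE $(r\,w'(r))^n/n = r^n/n+m_+/\omega_n$ (suitably normalised), which yields
\[
w(0)-\lim_{|x|\to\infty}\Bigl(w-\frac{|x|^2}{2}\Bigr) = -\int_0^\infty\Bigl[\bigl(r^n+n m_+/\omega_n\bigr)^{1/n}-r\Bigr]\,dr .
\]
Substituting $r^n=t\,n m_+/\omega_n$ turns this integral into a Beta function, so it equals $d_{n,0}(n m_+/\omega_n)^{2/n}$ up to the normalisation. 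If only negative defect is present, the extremal configuration is the hyperplane obstacle problem, whose deviation depends on $m_-$ through the same constant. Computing this and matching constants is routine once the profiles are set up.

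The third step compares a general $u$ with the models, and this is where the sharp Alexandrov estimates from \cite{jin2025abp} are used. Fix $x_0$. We bound $(u-Q)(x_0)$ from above and from below separately. For the bound $\sup(u-Q)-\inf(u-Q)$ we use the sharp estimate: the oscillation of $u-Q$ on large balls with zero boundary data is controlled by the mass of $\mu^-$ via the symmetrised comparison, and the mass of $\mu^+$ via the dual argument. By rearrangement, the extremal case is precisely the radial model. We then let the radius tend to infinity, using $u-Q\to0$. This gives
\[
\sup(u-Q)-\inf(u-Q)\le d_{n,0}(m/\omega_n)^{2/n}
\]
up to the normalisation of \cite{jin2025abp}. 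Replacing $c$ by the midpoint value halves the oscillation, and this is the source of the factor $2^{-2/n}$ combined with the splitting $m_++m_-$. Concavity of $s\mapsto s^{2/n}$ shows that the worst split is the one that balances the two sides.

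Strictness and sharpness come last. For strictness, equality in the sharp Alexandrov estimates forces $\mu$ to be a Dirac mass, or a measure concentrated on a hyperplane, on each side. But the positive and negative extremals cannot be realised simultaneously while keeping the $L^\infty$ midpoint balanced, because of the strict inequality in the concavity step or the strict rigidity statement. For sharpness, we glue a positive singularity of mass $m/2$ at the origin to an obstacle-type negative defect of mass $m/2$ placed at distance $R\to\infty$; the deviation then tends to the bound. I expect the main obstacle to be the third step, specifically passing from bounded-domain sharp estimates to the whole space without losing the sharp constant, and justifying the asymptotic normalisation $u-Q\to0$ under mere finiteness of $|\mu|$.
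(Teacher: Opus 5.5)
Your outline has the same skeleton as the paper's proof: normalise the asymptote to $\frac12|x|^2$, reduce to half the oscillation of $u-\frac12|x|^2$, control the two sides by $\mu_+$ and $\mu_-$, use concavity, and test sharpness on a half-mass Dirac and a half-mass obstacle pulled apart. However, the central inequality is wrong. Your displayed bound $\sup(u-Q)-\inf(u-Q)\le d_{n,0}(m/\omega_n)^{2/n}$ would, after halving, give the constant $\frac12 d_{n,0}$. For $n\ge 3$ this is strictly smaller than $2^{-2/n}d_{n,0}$, which contradicts the sharpness you are trying to prove: in your own example the oscillation tends to $2^{1-2/n}d_{n,0}(m/\omega_n)^{2/n}$. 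Halving yields a factor $\frac12$, not $2^{-2/n}$. What is needed is the pair of one-sided bounds
\[
\inf_{\R^n}(u-Q)\ge -d_{n,0}(m_+/\omega_n)^{2/n},\qquad \sup_{\R^n}(u-Q)\le d_{n,0}(m_-/\omega_n)^{2/n},
\]
followed by $\frac12(s^{2/n}+t^{2/n})\le\bigl(\frac{s+t}{2}\bigr)^{2/n}$. The paper gets these bounds from a sandwich your sketch does not contain. Let $u_\pm$ be the entire solutions of $\M u_+=\LL+\mu_+$ and $\M u_-=\LL-\mu_-$ with asymptote $\frac12|x|^2$; they exist by Theorem~\ref{thm:abp global existence}. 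By comparison $u_+\le u\le u_-$, and the sharp one-signed estimate Theorem~\ref{thm:abp global rigidity}(ii) applies to each. That sharp constant is not obtained by ``rearrangement''. It comes from the extremal characterisation Theorem~\ref{thm:abp extremal}, localised in Theorem~\ref{prop:decay estimate local} with an error $C(n)a^n/r^{n-2}$ that vanishes in the limit. Your Step 1 is also unsupported. Caffarelli--Li assume $f-1$ continuous and compactly supported with $f$ pinched between positive constants, which excludes Dirac masses, coincidence sets and non-compactly supported defects. Moreover, the rate $O(|x|^{2-n})$ fails in general, since the decay is governed by $|\mu|(E_A(x,\rho))$ as in Theorem~\ref{thm:global decay}. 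The existence of the asymptote is itself Theorem~\ref{thm:abp global rigidity}(i). The paper proves it via Lemma~\ref{lem:volume upper bound}, approximation on sections by solutions of $\det D^2\varphi_R=1$, Theorem~\ref{prop:decay estimate local}, and J\"orgens--Calabi--Pogorelov.

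Your strictness argument does not go through, for three reasons. First, concavity is an equality when $m_+=m_-$, so it cannot give strictness there. Second, the negative-side equality case is $\mu=-\chi_E$ with $E$ an ellipsoid of volume $m_-$ (the coincidence set of $W^*_{a_-}$), not a measure on a hyperplane. Third, since your sharpness family approaches both extremes at once, there is no soft reason they cannot be attained together. The missing mechanism is a touching argument built on the sandwich. Equality forces $m_+=m_-$, and forces $u=u_+$ at a minimum point $y_+$ and $u=u_-$ at a maximum point $y_-$ of $u-\frac12|x|^2$. So $u_\pm$ are extremal, and by Theorem~\ref{thm:abp global rigidity}(iii) they equal $W_{a_+}$ and $W^*_{a_-}$ up to linear functions. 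Hence $\mu$ is compactly supported and $u_+<\frac12|x|^2<u_-$. Outside a large ball $B_M$ all three functions solve $\det D^2=1$, so by the strong maximum principle $u$ is strictly separated there from $u_-$ or from $u_+$. In the first case, $\M u\ge\M u_-$ and comparison on $B_{2M}$ give $u\le u_--\delta$ with $\delta=\inf_{\partial B_{2M}}(u_--u)>0$, contradicting $u(y_-)=u_-(y_-)$; the second case is symmetric. Your sharpness construction is the paper's, but you do not verify it. You need Theorem~\ref{thm:global decay} to get $|u_\rho-\frac12|x|^2|\le C(n)a^n\rho^{2-n}$ on spheres of radius $\rho$ around each defect. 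Then compare with $W_{\tilde a}$ and, via Lemma~\ref{lem:comparison principle equal volume}, with $W^*_{\tilde a}$, where $\tilde a^n=m/(2\omega_n)$.
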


When $|\M u-\LL|(\R^n)=0$, the problem reduces, using Caffarelli's regularity theory \cite{caffarelli1990ilocalization,caffarelli1990interiorw2p,caffarelli1991regularity,caffarelli1995topics},  to the classical Jörgens-Calabi-Pogorelov theorem \cite{calabi1958improper, jorgens1954losungen, pogorelov1972improper} : every convex solution of
\begin{equation*}\label{eq:Monge-Ampère equation entire}
\det D^2 u = 1 \quad \text{in } \R^n
\end{equation*}
must be a quadratic polynomial. When $|\M u-\LL|(\R^n)=\infty$, the estimate \eqref{eq:global deviation} is immediate since its right-hand side is infinite. Hence the substantive content of Theorem \ref{thm:global deviation} concerns the intermediate, i.e.,  positive finite defect mass, regime
\[
0<|\M u-\LL|(\R^n)<\infty,
\]
where we obtain a genuinely quantitative and sharp deviation bound.

In this regime, we also prove asymptotic rigidity at infinity together with a sharp global Alexandrov estimate.

\begin{Theorem}\label{thm:abp global rigidity} 
Suppose $n \geq 3$. Let $u:\R^n\to\R$ be an entire convex function such that $0<|\M u-\LL|(\R^n)<\infty$.
Then:
\begin{itemize}
\item[(i) .] (\emph{Rigidity at infinity}) There exist $A \in \A_n$, $b \in \mathbb{R}^n$, and $c \in \mathbb{R}$ such that
\begin{equation}\label{eq:asymptotic-quadratic}
\limsup_{|x| \to \infty} \left| u(x) - \left(\frac{1}{2}x^{\top}Ax  + b \cdot x + c\right) \right| = 0.
\end{equation} 
\item[(ii).] (\emph{Global Alexandrov estimate}) Moreover, with the same $A,b,c$, we have
\begin{equation}\label{eq:abp-global}
\left\| u(x) - \left(\frac{1}{2}x^{\top}Ax  + b \cdot x + c\right) \right\|_{L^{\infty}(\R^n)} \leq d_{n,0} \left(\omega_n^{-1}|\M u-\LL|(\R^n)\right)^\frac{2}{n},
\end{equation}
where $d_{n,0}$ is defined in \eqref{eq:def dn00}. 
\item[(iii).] (\emph{Sharpness}) Equality in \eqref{eq:abp-global} holds if and only if, up to a unimodular affine transformation\footnote{An affine transformation $T : \mathbb{R}^n \to \mathbb{R}^n$ is of the form $T x = A x + b$ for some $n \times n$ matrix $A$ and vector $b \in \mathbb{R}^n$. It is called unimodular if $|\det A| = 1$.} and addition of linear functions, 
\[
u(x)=\int_0^{|x|} \left( r^n + a^n \right)^{\frac{1}{n}} \ud r \quad\mbox{or}\quad u(x)=\int_{0}^{|x|} \max\left\{ r^n - a^n,\, 0 \right\}^{\frac{1}{n}}   \ud r,
\]
where
\begin{equation}\label{eq:total-variation}
a:= (\omega_n ^{-1}|\M u-\LL|(\mathbb{R}^n))^{\frac{1}{n}}.
\end{equation}
\end{itemize}
\end{Theorem}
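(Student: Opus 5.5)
We prove the three parts in order. Write $\mu:=\M u-\LL$ and $m:=|\mu|(\R^n)\in(0,\infty)$, and decompose $\mu=\mu^+-\mu^-$. The underlying idea is to compare $u$ with the two radial model solutions, the isolated-singularity profile and the hyperplane-obstacle profile. Their deviations from $\frac12|x|^2$ at the origin are computed exactly by Gamma-function integrals, and these produce the constant $d_{n,0}$.

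\textbf{Part (i).} Since $\M u\le \LL+\mu^+$ and $\M u\ge \LL-\mu^-$ as measures, $\M u$ agrees with Lebesgue measure up to a finite error outside large balls. The plan for rigidity at infinity is as follows.
\begin{itemize}
\item Choose $R$ so large that $|\mu|(\R^n\setminus B_R)<\varepsilon$.
\item On the sections of $u$ centered far out, use Caffarelli's localization together with the John normalization. Because the measure error becomes relatively negligible after rescaling, the normalized solutions converge, and the Jörgens--Calabi--Pogorelov theorem forces $u$ to be asymptotically quadratic, say $u=\frac12x^\top Ax+O(|x|^{2-\delta})$ for some $A\in\A_n$.
\item Upgrade to $u-P\to c$ and then to $u-P\to 0$. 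For this, view $w:=u-\frac12x^\top Ax-b\cdot x$ as a solution of a linearized equation $a^{ij}\partial_{ij}w=$ (finite measure) in exterior domains, and use a Green-function/Newtonian-potential comparison, which decays like $|x|^{2-n}$ when $n\ge3$.
\end{itemize}
The hypothesis $n\ge 3$ is used exactly here, since decay of the $|x|^{2-n}$ potential is what makes the constant term exist. The same comparison also yields an explicit convergence rate.

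\textbf{Part (ii).} Normalize $A=I$, $b=0$, $c=0$ by a unimodular affine change; this leaves $m$ unchanged. The bound splits into an upper and a lower bound on $u-\frac12|x|^2$.
\begin{itemize}
\item \emph{Upper bound at a point $x_0$.} We must show $u(x_0)-\frac12|x_0|^2\le d_{n,0}(m/\omega_n)^{2/n}$. Apply the sharp Alexandrov estimates from \cite{jin2025abp} on growing domains $B_R(x_0)$ and let $R\to\infty$ using (i). Equivalently, rearrange $\mu^-$ symmetrically: the worst case is the one where the whole negative mass sits on a ball centered at $x_0$ where $\M u$ vanishes. That configuration is the obstacle profile $\int_0^{|x|}\max\{r^n-a^n,0\}^{1/n}\ud r$, and it attains the value $d_{n,0}a^2$ at its center, after matching at infinity.
\item \emph{Lower bound.} Symmetrically, the worst case concentrates $\mu^+$ as a Dirac mass at $x_0$, giving the profile $\int_0^{|x|}(r^n+a^n)^{1/n}\ud r$.
\item \emph{The constant.} Compute $\int_0^\infty\big(r-(r^n-a^n)_+^{1/n}\big)\ud r$ and $\int_0^\infty\big((r^n+a^n)^{1/n}-r\big)\ud r$ via the Beta function. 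Both equal $d_{n,0}a^2$, so the same constant governs both directions.
\end{itemize}

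\textbf{Part (iii).} Equality follows from the rigidity case of the sharp estimate in \cite{jin2025abp}. If equality holds, every inequality in the comparison chain must be tight. This forces all of $|\mu|$ to be of one sign, concentrated as in the model, and forces radial symmetry after the affine normalization; hence $u$ is one of the two listed profiles.

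\textbf{Main obstacle.} The crux is turning the local sharp Alexandrov estimate into a global one with the correct normalization at infinity. The estimate must be applied in a limit of domains where the boundary data are $o(1)$-close to the quadratic, so that no boundary loss appears in the constant. Part (i) with a quantitative rate is precisely what makes this passage to the limit lossless. I expect the decay step $u-P\to c$, which rests on the exterior potential theory for a merely measure-valued right-hand side, to be the hardest technical point.
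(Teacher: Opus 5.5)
The gap is in part (i), at the step from ``asymptotically quadratic'' to ``$u-P$ bounded and tending to $0$''. Parts (ii) and (iii) rest on that step.

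\textbf{Why the linearization fails.} Writing $a^{ij}\partial_{ij}w=$ (measure) on an exterior domain requires $u$ to be $C^{1,1}$ there with $D^2u$ close to $A$, so that $a^{ij}=\int_0^1\operatorname{Cof}(tD^2u+(1-t)A)\,dt$ is uniformly elliptic. For a general finite $\mu$ this fails on every exterior domain. Take $\mu=\omega_n\sum_i a_i^n\delta_{y_i}$ with $|y_i|\to\infty$, which is exactly the setting of Theorem~\ref{prop:strictly convex cond global}. Then $u$ is not even differentiable at any $y_i$, since $|\partial u(y_i)|=\omega_n a_i^n>0$. Choosing $R$ with $|\mu|(\R^n\setminus B_R)<\varepsilon$ makes the exterior mass small, not zero. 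Even formally, the Newtonian potential of a Dirac mass is unbounded. By contrast, the actual deviation of $W_b$ from its quadratic asymptote is the bounded quantity $d_{n,0}b^2$. The near-field response is nonlinear, of order $(\text{mass})^{2/n}$, and linear potential theory cannot capture it.

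\textbf{Why step (b) does not help.} On a section of height $R^2$, the classical Alexandrov estimate only gives $|u-\varphi_R|\le CaR$, where $\det D^2\varphi_R=1$ with the same boundary values. So compactness plus J\"orgens--Calabi--Pogorelov gives only blow-down information. The fixed $A$ with rate $O(|x|^{2-\delta})$ is asserted, not derived. In any case, no bound of this type controls $u-P$ at the $O(1)$ level.

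\textbf{The missing idea.} What is needed is a scale-invariant local estimate of order $a^2$; this is the paper's Theorem~\ref{prop:decay estimate local}. On a normalized section it gives $|u(x_0)-\varphi(x_0)|\le d_{n,0}b^2+C(n)\bigl(a^nr^{2-n}+b^{2+\beta}r^{-\beta}\bigr)$, where $b$ counts only the mass in $B_r(x_0)$ and outside $B_{1/32}$. The nonlinear near-field term comes from the extremal functions of Theorem~\ref{thm:abp extremal} together with Theorem~\ref{thm:ordera2 more}. Harnack for the linearized operator is used only in annuli of auxiliary problems from which the measure has been excised. Apply this to $u$ and $\varphi_R$ on $D_R=\{u<\frac12R^2\}$ after John normalization. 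It gives $|u-\varphi_R|=O(1)$ on $c_1D_R$ uniformly in $R$, since $R^2(a/R)^2=a^2$. Hence the $\varphi_R$ themselves, not rescalings, converge to an entire solution of $\det D^2\varphi=1$, which is a quadratic $P$. Taking $\rho\sim|x|$ in the same bound then gives $u-P\to0$.

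\textbf{Part (ii).} Granting (i), your argument is sound and more direct than the paper's route through Theorem~\ref{thm:abp global existence}, provided you implement it as follows. Solve $\M U_R=\M u$ in $B_R$ with $U_R=\frac12|x|^2$ on $\partial B_R$. Comparison gives $|u-U_R|\le\sup_{\partial B_R}|u-\frac12|x|^2|\to0$. Then apply Theorem~\ref{thm:ordera2 more} to $R^{-2}U_R(Rx)$ against $\frac12|x|^2$ on $B_1$, which yields $|U_R(x_0)-\frac12|x_0|^2|\le d_{n,0}a^2\bigl(1+C(a/R)^\beta\bigr)$. The rearrangement heuristic you also offer is not an argument and should be dropped.

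\textbf{Part (iii).} Applying Theorem~\ref{thm:rigidity} to these approximations identifies $\mu$ as $\omega_na^n\delta_y$ or $-\chi_{E_A(y,a)}$; the supremum is attained at a finite point by (i). Tightness of the inequalities does not by itself force radial symmetry. You still need uniqueness under a prescribed quadratic asymptote, or the Jin--Xiong classification \cite{jin2016solutions}, to conclude that $u$ is $W_a$ or $W_a^*$.
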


The estimate \eqref{eq:abp-global} is affine invariant, and it provides a sharp global version of our recent optimal Alexandrov estimates \cite{jin2025abp} in bounded domains (see Theorem \ref{thm:ordera2}). 

Note also that the exponent $\frac{2}{n}$ in the right-hand side of \eqref{eq:abp-global} is different from the exponent $\frac{1}{n}$ in the classical Alexandrove estimate (see \eqref{eq:abp gene 1/n}). 

We also have a pointwise estimate, which in particular quantifies the convergence rate to the quadratic asymptote in \eqref{eq:asymptotic-quadratic}. 

\begin{Theorem}\label{thm:global decay}
Suppose $n \geq 3$. Let $u:\R^n\to\R$ be an entire convex function such that $0<|\M u-\LL|(\R^n)<\infty$. Suppose $u$ satisfies \eqref{eq:asymptotic-quadratic}.
Then, there exists a positive constant $C(n)$ depending only on $n$ such that for every $\rho>0$, we have the quantitative estimate
\begin{equation}\label{eq:global decay}
\begin{split}
&\left| u(x) - \left(\frac{1}{2} x^{\top} A x + b \cdot x + c\right) \right|\\
& \le  d_{n,0}\bigl(\omega_n^{-1}|\M u-\LL|(E_A(x,\rho))\bigr)^{\frac{2}{n}} + C(n) \frac{|\M u-\LL|(\R^n)}{\rho^{n-2}} ,
\end{split}
\end{equation}
where  
$E_A(x, \rho) := \{ y \in \mathbb{R}^n : (y-x)^{\top} A (y-x) < \rho^2 \}.$

In particular, by choosing, e.g., $\rho = \tfrac{1}{2}\sqrt{x^{\top} A x}$, we obtain a pointwise decay estimate.
\end{Theorem}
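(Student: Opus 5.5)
Throughout, normalize by a unimodular affine change of variables and subtract the linear part, so that $A=I$, $b=0$, $c=0$. This is harmless: $\M u-\LL$ is invariant under unimodular affine maps, and $E_A(x,\rho)$ becomes the Euclidean ball $B_\rho(x)$. Set $\mu:=\M u-\LL$ and $w:=u-\frac12|x|^2$. By \eqref{eq:asymptotic-quadratic}, $w\to0$ at infinity.

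\textbf{Splitting the measure.} Fix $x$ and $\rho$, and write $\mu=\mu_1+\mu_2$ with $\mu_1:=\mu\llcorner B_\rho(x)$ and $\mu_2:=\mu\llcorner(\R^n\setminus B_\rho(x))$. The far part $\mu_2$ should enter only through a Newtonian-potential-type bound. Heuristically, $\Delta w\approx$ a linearization of $\mu$, and a mass $m$ at distance at least $\rho$ produces an effect of order $m\rho^{2-n}$ at $x$. The near part should enter through the sharp estimate of Theorem \ref{thm:abp global rigidity}(ii).

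\textbf{Comparison functions.} Because the problem is nonlinear, I would not linearize. Instead I would compare $u$ with explicit radial barriers, using the Alexandrov comparison principle on large balls and letting the radius go to infinity, which is allowed since $w\to0$.

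\emph{Upper bound.} Let $v_+$ solve $\M v_+=\LL+\mu_+$ with $v_+-\frac12|y|^2\to0$ at infinity. Its existence and the comparison $u\le v_+$ would be obtained by the same exhaustion argument used for Theorem \ref{thm:abp global rigidity}. Then I would bound $v_+$ in two pieces:
\begin{itemize}
\item the contribution of $\mu_+\llcorner B_\rho(x)$, handled by the sharp constant $d_{n,0}$ exactly as in (ii);
\item the contribution of the exterior mass, handled by a superposition of fundamental-solution barriers.
\end{itemize}
A convenient explicit supersolution for the second piece is
\[
\tfrac12|y|^2 + C(n)\,|\mu|(\R^n)\sum (\cdots),
\]
or more concretely $\frac12|y|^2+\int \Phi(y-z)\,d\mu_2(z)$ with $\Phi(y)=c_n\min(|y|^{2-n},\dots)$, suitably truncated. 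Convexity of $\det^{1/n}$ on positive matrices shows that adding a function with $D^2\ge -\epsilon I$ changes $\det$ in a controlled way.

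\emph{Lower bound.} This is symmetric, using $\mu_-$ and the obstacle-type barrier from part (iii).

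\textbf{Main obstacle.} The hard step is the interaction between the two pieces. The sharp near-field constant $d_{n,0}$ comes from an extremal radial configuration, while the far-field term must be superposed on it. Since Monge--Amp\`ere is not additive, some care is needed.

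I would first try the concavity of $\det^{1/n}$ and the Minkowski inequality
\[
\det(M_1+M_2)^{1/n}\ge \det M_1^{1/n}+\det M_2^{1/n}
\]
to combine the solution for $\mu_1$ with a small perturbation. The far-field piece is harmonic-like in $B_{\rho/2}(x)$ and has $C^2$ norm $\lesssim |\mu|\rho^{-n}$ there. This should cost only an additive error $C(n)|\mu|(\R^n)\rho^{2-n}$ at the point $x$, possibly after enlarging $C(n)$ and replacing $B_\rho$ by $B_{\rho/2}$ in intermediate steps.

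**Pointwise decay.** Finally, choosing $\rho=\frac12|x|$ gives the pointwise decay: $|\mu|(B_\rho(x))\to0$ by finiteness of the total mass, and the second term decays like $|x|^{2-n}$.
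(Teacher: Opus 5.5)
There is a genuine gap. The two steps you defer (``suitably truncated'', ``some care is needed'') are where the whole proof lives, and the tools you propose do not work for a general Borel measure.

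First, a sign slip. Since $\M v_+=\LL+\mu_+\ge\M u$ with the same asymptote, comparison gives $v_+\le u$. So $\mu_+$ (with the profile $W_b$) governs the \emph{lower} bound, and $\mu_-$ (with $W_b^*$) governs the upper bound.

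More seriously, the far-field barrier $\frac12|y|^2\pm\int\Phi(y-z)\,d\mu_2(z)$ is not convex near concentrations of $\mu_2$. For an atom of mass $m$ at $z$, the Newtonian potential has an indefinite Hessian of size $\sim m|y-z|^{-n}$, which dominates $I$ once $|y-z|\lesssim m^{1/n}$. No nonconstant convex function decays at infinity, so any admissible truncation must glue in a genuinely nonlinear profile at every concentration point of $\mu_2$. That is why the paper avoids barriers in the measure setting.

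The coupling step also fails as stated. Take $\M v_1=\LL+\mu\chi_{B_\rho(x)}$. Since $\M u=\M v_1$ in $B_\rho(x)$, comparison reduces $|u-v_1|(x)\lesssim|\mu|(\R^n)\rho^{2-n}$ to the same bound on some sphere $\partial B_{\rho'}(x)$. For $\rho'\le\rho/2$ the near mass may sit right next to that sphere, and for $\rho'$ near $\rho$ the far mass may. Either way both functions can be singular there, so no Harnack inequality is available.

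Minkowski's inequality does not rescue this. It needs both matrices positive semidefinite, which a harmonic-like correction never is, since its Hessian is trace-free. The fallback $\det(M-\epsilon I)\ge\det M\,(1-\epsilon/\lambda_{\min}(M))^n$ needs $\lambda_{\min}(D^2v_1)\gg\epsilon$. That fails near singular points: for $W_b$ the radial eigenvalue $r^{n-1}(r^n+b^n)^{\frac1n-1}\to0$ at the vertex.

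Finally, in the paper Theorem \ref{thm:abp global rigidity}(ii) is itself obtained from \eqref{eq:global decay} by letting $\rho\to\infty$. So it cannot be the independent source of the sharp near-field constant; that comes from the bounded-domain results of Section \ref{sec:abp estimates}.

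The missing idea is the extremal rearrangement of Theorem \ref{thm:abp extremal}, used in the local estimate Theorem \ref{prop:decay estimate local}. After reducing to one sign, the paper puts the far-field part of the measure into a reference function $\hat\varphi$. Since $u\in\D_{b,\hat\varphi}$, it then replaces the entire near mass $\omega_nb^n$ in $B_r(x_0)$ in one of two ways:
\begin{enumerate}
\item for the lower bound, by a Dirac mass at $x_0$;
\item for the upper bound, by an obstacle solution whose coincidence set contains $x_0$ and lies in $B_{Ca}(x_0)$.
\end{enumerate}
This can only enlarge the deviation at $x_0$, and it creates a mass-free annulus around $x_0$ at scale $r$.

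On that annulus, Savin's perturbation theory makes the linearized operator uniformly elliptic, so Harnack gives $|\hat u-\varphi|\approx\kappa$. The bound $\kappa\lesssim a^nr^{2-n}$ comes from two comparisons. Lemma \ref{lem:comparison gene} compares $\hat u$ with a solution $w$ having the same total mass smeared smoothly over the annulus. Then $w$ is compared with the isolated-singularity solution $u_a(\cdot,x_0)$, using the boundary contact Lemma \ref{lem:extends to boundary} and \eqref{eq:dna2 uaap}.

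Only then is the explicit profile used. On $B_{r/2}(x_0)$ one compares with $W_b$ (resp.\ $W_b^*$, via Lemma \ref{lem:comparison principle equal volume}), obtaining $d_{n,0}b^2+C(n)a^nr^{2-n}$.

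The global estimate follows by exhaustion. Apply the local estimate to $u_R$ on $E_R$ with measure $\mu\chi_{E_{R/32}}$ and $r=\rho/R$, let $R\to\infty$, and identify the limit with $u$ by the uniqueness in Theorem \ref{thm:abp global existence}.
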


Define the signed measure
\[
\mu:=\M u-\LL.
\]
In this notation, $u$ solves the Monge-Amp\`ere equation
\begin{equation}\label{eq:global-ma-eq-mu}
\det D^2 u = 1 + \mu \quad \text{in } \mathbb{R}^n
\end{equation}
in the Alexandrov sense. In particular, the convex functions in Theorems \ref{thm:global deviation}, \ref{thm:abp global rigidity} and \ref{thm:global decay} with $0<|\M u-\LL|(\R^n)<\infty$ may be viewed as convex solutions of \eqref{eq:global-ma-eq-mu} for which $1 + \mu$ is a non-negative Borel measure and satisfies $0<|\mu|(\mathbb{R}^n)<\infty$.

Caffarelli-Li \cite{caffarelli2003extension} studied rigidity for \eqref{eq:global-ma-eq-mu} in the absolutely continuous setting $\mu=(f(x)-1)\,\ud x$, assuming $f \in C^0(\mathbb{R}^n)$, $(f-1)$ is compactly supported, and $0<\inf_{\R^n}f <\sup_{\R^n}f < \infty$. They proved that
\begin{itemize}
\item[(i)] For $n = 2$, 
\begin{equation*}\label{eq:asympotic at infty n2}
u(x) = \frac{1}{2}x^{\top}Ax  + b \cdot x + \left(\frac{1}{2\pi}\int_{\mathbb{R}^2} f(x) \ud x\right) \log \sqrt{x^{\top}Ax} + c + O(|x|^{-1}) \quad \text{as } x \to \infty,
\end{equation*}
\item[(ii)] For $n \geq 3$,
\begin{equation}\label{eq:asympotic at infty n3}
u(x) = \frac{1}{2}x^{\top}Ax  + b \cdot x + c + O(|x|^{2-n}) \quad \text{as } x \to \infty.
\end{equation}
\end{itemize} 
Our estimate \eqref{eq:global decay} recovers \eqref{eq:asympotic at infty n3} if $|\M u-\LL|(E_A(x,\rho))=O(|x|^{\frac{(2-n)n}{2}})$ with $\rho = \tfrac{1}{2}\sqrt{x^{\top} A x}$, in particular, when $\M u-\LL$ is compactly supported. Barrier functions and comparison principles were used extensively in the proof of \eqref{eq:asympotic at infty n3} in \cite{caffarelli2003extension}, but they are not well suited to our measure-theoretic framework \eqref{eq:global-ma-eq-mu} and typically do not yield sharp constants. Instead, the sharp Alexandrov estimates developed in our earlier work  \cite{jin2025abp}  play a central role here. The asymptotic behavior of the solutions to $\det D^2 u = 1$ outside a bounded convex set was established earlier for $n=2$ by Ferrer-Mart\'inez-Mil\'an \cite{ferrer1999extension, ferrer2000space}. The existence and uniqueness of solutions  to \eqref{eq:global-ma-eq-mu}  satisfying the above asymptotic condition in this absolutely continuous setting was shown in Caffarelli-Li \cite{caffarelli2003extension} for $n \geq 3$ and in Bao-Xiong-Zhou \cite{bao2019existence} for $n \geq 2$. Chou-Wang \cite{chou1996entire} and Jian-Wang \cite{jian2014existence} proved existence of infinitely many entire solutions to general  Monge-Amp\`ere equations.

Within the measure-theoretic framework, we establish existence and uniqueness of entire convex solutions to \eqref{eq:global-ma-eq-mu} with a prescribed quadratic asymptote. 
\begin{Theorem}\label{thm:abp global existence} 
Suppose $n \geq 3$ and $1+\mu$ is a non-negative Borel measure satisfying $0<|\mu|(\mathbb{R}^n)<\infty$.  Then for every $A \in \A_n$, $b \in \mathbb{R}^n$, and $c \in \mathbb{R}$, there exists a unique convex solution of \eqref{eq:global-ma-eq-mu} satisfying \eqref{eq:asymptotic-quadratic}.

\end{Theorem}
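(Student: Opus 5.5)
By a unimodular affine change of variables and subtraction of $b\cdot x+c$, we may assume $A=I$, $b=0$, $c=0$. Throughout we rely on Theorem \ref{thm:abp global rigidity} and Theorem \ref{thm:global decay}: every convex solution of \eqref{eq:global-ma-eq-mu} with $0<|\mu|(\R^n)<\infty$ has a unique quadratic asymptote, and the deviation from it is bounded by $d_{n,0}(\omega_n^{-1}|\mu|(\R^n))^{2/n}$.

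\textbf{Uniqueness.} Suppose $u_1,u_2$ both solve \eqref{eq:global-ma-eq-mu} and both satisfy \eqref{eq:asymptotic-quadratic} with $A=I$, $b=0$, $c=0$. Then $u_1-u_2\to0$ at infinity. On a large ball $B_R$ we have $u_1\le u_2+\varepsilon_R$ on $\partial B_R$, with $\varepsilon_R\to0$. Both functions have the same Monge--Amp\`ere measure $1+\mu$ in $B_R$. The comparison principle for Alexandrov solutions with equal measures therefore gives $u_1\le u_2+\varepsilon_R$ in $B_R$. Letting $R\to\infty$ and exchanging the roles of $u_1,u_2$ yields $u_1=u_2$.

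\textbf{Existence.} We approximate. Let $\mu_k:=\mu\lfloor B_k$, so that $1+\mu_k\ge0$ and $|\mu_k|\le|\mu|$. For $R>k$, let $u_{k,R}$ be the Alexandrov solution of $\M u_{k,R}=1+\mu_k$ in $B_R$ with $u_{k,R}=\frac12|x|^2$ on $\partial B_R$; it exists by the standard Dirichlet theory for finite nonnegative measures on strictly convex domains.

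The key estimate is a bound on $|u_{k,R}-\frac12|x|^2|$ that is uniform in $R$ and $k$, of size $C(n)|\mu|(\R^n)^{2/n}$. We get it as follows.
\begin{itemize}
\item Split $\mu_k=\mu_k^+-\mu_k^-$.
\item Compare $u_{k,R}$ with the solution $v^-$ of $\M v^-=1+\mu_k^+$ and the solution $v^+$ of $\M v^+=(1-\mu_k^-)_+$, both with the same boundary data. The second measure is a nonnegative measure because $1+\mu\ge0$ forces $\mu^-\le\LL$. Comparison gives $v^-\le u_{k,R}\le v^+$.
\item Estimate $v^\pm-\frac12|x|^2$ using the sharp Alexandrov estimate Theorem \ref{thm:ordera2} from \cite{jin2025abp} on bounded domains. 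That estimate is exactly of order $(|\mu|)^{2/n}$, independently of the domain, for measures whose defect relative to Lebesgue has finite mass. Equivalently, one may compare with the radial model solutions of Theorem \ref{thm:abp global rigidity}(iii), recentered.
\end{itemize}

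This uniform $L^\infty$ bound gives local Lipschitz bounds. By Arzel\`a--Ascoli and weak continuity of Monge--Amp\`ere measures, a subsequence $R\to\infty$ converges to an entire solution $u_k$ of $\M u_k=1+\mu_k$ satisfying $|u_k-\frac12|x|^2|\le C$.

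We still need the exact asymptote for $u_k$. Since $\mu_k$ is compactly supported, Theorem \ref{thm:abp global rigidity}(i) gives $u_k$ some asymptote $\frac12x^\top A_kx+b_k\cdot x+c_k$. The boundedness of $u_k-\frac12|x|^2$ forces $A_k=I$ and $b_k=0$. We then replace $u_k$ by $u_k-c_k$; since $|c_k|\le C$, the uniform bound survives.

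Finally let $k\to\infty$. A subsequence converges locally uniformly to an entire solution $u$ of \eqref{eq:global-ma-eq-mu}, and $|u-\frac12|x|^2|\le C$. To identify the asymptote we use Theorem \ref{thm:global decay}. Its right-hand side is controlled by $|\mu_k|(E(x,\rho))^{2/n}+C\,|\mu|(\R^n)\rho^{2-n}$, and $|\mu_k|(E(x,\rho))\le|\mu|(\R^n\setminus B_{|x|/2})\to0$ uniformly in $k$ once $\rho=|x|/2$. So $u_k-\frac12|x|^2\to0$ at infinity uniformly in $k$, and this passes to the limit $u$.

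\textbf{Main obstacle.} I expect the hardest step to be the uniform-in-$R$ bound on the ball approximations. The concern is the Dirichlet problem on $B_R$, where the boundary data $\frac12|x|^2$ must be respected with an error independent of $R$. If Theorem \ref{thm:ordera2} is not directly usable there, I would instead build explicit barriers. For $\mu^+$, take $\frac12|x|^2$ minus a superposition of the isolated-singularity profiles $\int_0^{|x-y|}(r^n+a^n)^{1/n}\ud r-\frac12|x-y|^2$. For $\mu^-$, use the obstacle profiles. The concavity of $\det^{1/n}$ gives the needed sub- and supersolution inequalities.
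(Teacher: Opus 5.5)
Your uniqueness argument is the paper's. Your uniform bound on the ball approximations is also fine, and easier than you feared. Take Theorem~\ref{thm:ordera2} with $\varphi=\frac12|x|^2-2R^2$ on $B_{2R}$, $\lambda=\Lambda=1$, $\epsilon=\frac12$, $t=\frac14$, so that $\Omega_t=B_R$. Its constant depends only on $n,\epsilon,\lambda,\Lambda$, so it gives $\|u_{k,R}-\frac12|x|^2\|_{L^\infty(B_R)}\le C(n)\bigl(\omega_n^{-1}|\mu|(\R^n)\bigr)^{2/n}$ uniformly in $R$ and $k$. Your barrier fallback would not have worked as described. In a convex combination $\sum_i\lambda_i w_i$, the Dirac mass of $w_i$ is multiplied by $\lambda_i^n$, not by $\lambda_i$. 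On the $\mu^-$ side, concavity of $\det^{1/n}$ gives a lower bound on the determinant, which is the wrong direction for a supersolution.

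The genuine gap is logical. Your last step, and the standing assumption in your first paragraph, use Theorem~\ref{thm:global decay} and part (ii) of Theorem~\ref{thm:abp global rigidity}. In the paper both are \emph{consequences} of the statement you are proving. The estimates \eqref{eq:abp-global} and \eqref{eq:global decay} are produced inside the existence proof. There they come from the quadratic case \eqref{eq:asymptotic-quadratic-order local special} of Theorem~\ref{prop:decay estimate local}, applied to solutions of $\det D^2u_R=1+\mu\chi_{E_A(0,R/32)}$ in $E_A(0,R)$ and passed to the limit. They are then transferred to an arbitrary solution with that asymptote by uniqueness. So invoking Theorem~\ref{thm:global decay} for $u_k$ amounts to invoking the existence construction for $\mu_k$. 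The uniform-in-$k$ decay at infinity that your final limit needs is therefore never actually supplied.

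The repair is already inside your argument. Only part (i) of Theorem~\ref{thm:abp global rigidity} is proved independently of the present theorem. Step~1 of its proof uses Lemma~\ref{lem:volume upper bound}, Theorem~\ref{prop:decay estimate local} and the J\"orgens--Calabi--Pogorelov theorem, and nothing downstream. Apply it, exactly as you did for $u_k$, directly to the limit $u$ of the ball approximations with the full measure $\mu$. The bound $|u-\frac12|x|^2|\le C$ forces the asymptote to be $\frac12|x|^2+c'$, and $u-c'$ is the desired solution. The truncation $\mu_k$ and the second limit then become unnecessary.

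Compared with the paper, this repaired route gets existence from the non-sharp Theorem~\ref{thm:ordera2} plus rigidity at infinity. The paper's construction instead yields the asymptote directly, without appealing to rigidity at infinity. It also gives the sharp constant $d_{n,0}$ and the decay rate, which the paper then needs for Theorem~\ref{thm:abp global rigidity}(ii) and Theorem~\ref{thm:global decay}.
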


A notable special case is when $\mu$ is a positive Dirac measure, leading to the isolated singularity problem
\begin{equation*}\label{eq:isolated singularity problem}
\det D^2 u = 1 + \omega_n a^n \delta_0 \quad \text{in } \mathbb{R}^n,
\end{equation*}
where $a$ is defined in \eqref{eq:total-variation}. Under unimodular affine transformation and addition of linear functions, the solutions are uniquely given by 
\begin{equation}\label{eq:isolated global solution}
W_a(x) = \int_0^{|x|} \left( r^n + a^n \right)^{\frac{1}{n}} \ud r.
\end{equation}
This classification result was proved by Jörgens~\cite{jorgens1955harmonische} for $n=2$ and by Jin-Xiong~\cite{jin2016solutions} for $n \ge 3$.
Via Legendre duality, it implies that every convex solution of the hyperplane obstacle problem\begin{equation*}\label{eq:obstacle problem}
\det D^2 v = \chi_{\{v > 0\}}, \quad v \geq 0\quad  \text{in } \mathbb{R}^n,
\end{equation*}
(where $\chi_E$ denotes the characteristic function of a set $E$) such that the set $\{v > 0\}$ is bounded and has positive Lebesgue measure,  is affine equivalent to the Legendre transform $W_a^*$ of $W_a$, namely
\begin{equation}\label{eq:obstacle global solution}
W_a^*(x) = \int_{0}^{|x|} \max\left\{ r^n - a^n,\, 0 \right\}^{\frac{1}{n}}   \ud r,
\end{equation}
with $|\{ W_a^* = 0\}| = \omega_n a^n$. Furthermore, 
\begin{equation*}
\lim_{|x|\to \infty} \left(W_1(x) - \frac{1}{2}|x|^2\right)=\lim_{|x|\to \infty} \left( \frac{1}{2}|x|^2-W_1^*(x)\right)=  \frac{\Gamma\left(\frac{1}{n}\right)\Gamma\left(\frac{n-2}{n}\right)}{2n\Gamma\left(\frac{n-1}{n}\right)}=d_{n,0}.
\end{equation*}

We also consider entire solutions with multiple, possibly infinitely many, isolated singularities. 
In dimension $n=2$, solutions are known to be strictly convex and admit an explicit representation \cite{galvez2005space}. For $n \ge 3$, the global structure is subtler. Mooney-Rakshit \cite{mooney2021solutions,mooney2023singular} constructed solutions that fail to be strictly  convex by developing linear segments or polyhedral flats of dimension less than $n/2$. Using the global Alexandrov estimate \eqref{eq:abp-global}, we obtain a quantitative sufficient condition ensuring strict convexity, and hence smoothness away from the set of the isolated singularities.
\begin{Theorem}\label{prop:strictly convex cond global}
Let $n \geq 3$. Suppose $u$ is an entire convex solution of 
\begin{equation}\label{eq:sing global infinite sing}
\det D^2 u = 1 +\omega_n  \sum_{i=1}^{\infty}  a_i^n \,\delta_{y_i}
\quad \text{in } \mathbb{R}^n,
\end{equation}
where $a_i \geq  0$ satisfies $0<\sum_{i=1}^{\infty}\omega_n a_i^n <\infty$ and  $|y_i|\to\infty$ as $i\to\infty$. Assume that $u$ asymptotically approaches $\frac{1}{2} x^\top A x + b \cdot x + c$ for some $A \in \A_n$, $b \in \mathbb{R}^n$, and $c \in \mathbb{R}$ in the sense of \eqref{eq:asymptotic-quadratic}. If
\begin{equation}\label{eq:sufficient cond for sc}
\sum_{i=1}^{\infty} a_i^n
< (8 d_{n,0})^{\frac{n}{2}} \inf_{i \neq j} \big| (y_i - y_j)^{\top}A (y_i - y_j) \big|^\frac{n}{2},
\end{equation}
then $u$ is strictly convex, and smooth in $\mathbb{R}^n \setminus \{y_1, y_2, \dots\}$.
\end{Theorem}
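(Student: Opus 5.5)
The plan is to argue by contradiction. I will combine Caffarelli's structure theorem for non-strictly convex solutions with the global Alexandrov estimate \eqref{eq:abp-global}, in a one-sided form available because the defect measure is nonnegative. First, normalize. Apply the unimodular change of variables $x\mapsto A^{1/2}x$ and subtract the affine part. This reduces to $A=I$, $b=0$, $c=0$, and leaves \eqref{eq:sing global infinite sing}, \eqref{eq:asymptotic-quadratic} and the quantities $(y_i-y_j)^\top A(y_i-y_j)$ unchanged. Write $P(x)=\frac12|x|^2$ and $D:=d_{n,0}\bigl(\sum_i a_i^n\bigr)^{2/n}$. Here $\mu=\omega_n\sum_i a_i^n\delta_{y_i}\ge0$, so $\omega_n^{-1}|\mu|(\R^n)=\sum_i a_i^n$, and Theorem \ref{thm:abp global rigidity}(ii) gives $|u-P|\le D$ on $\R^n$.

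Next I would obtain the one-sided bound $u\le P$. Since $\M u\ge \LL=\M P$ and $u-P\to0$ at infinity by \eqref{eq:asymptotic-quadratic}, the Alexandrov comparison principle applies on large balls $B_R$. On $\partial B_R$ we have $u\le P+\varepsilon_R$ with $\varepsilon_R\to0$, so $u\le P+\varepsilon_R$ in $B_R$, and letting $R\to\infty$ gives $u\le P$. Together with the estimate above, this yields
\[
P-D\le u\le P\quad\text{on }\R^n .
\]

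Now suppose $u$ is not strictly convex. Then some supporting affine function $\ell$ has a contact set $\Sigma=\{u=\ell\}$ that is not a single point. Since $u-\ell$ grows quadratically at infinity, $\Sigma$ is a compact convex set of dimension at least one. Away from the closed discrete set $\{y_i\}$ (discrete because $|y_i|\to\infty$), we have $\det D^2u=1$. By Caffarelli's theorem \cite{caffarelli1990ilocalization}, a contact set cannot have an extreme point in a region where the Monge-Amp\`ere measure is bounded between positive constants. Hence every extreme point of $\Sigma$ is one of the $y_i$. A compact convex set that is not a point has at least two extreme points, so $\Sigma$ contains a segment $[y_i,y_j]$ with $i\ne j$, and $u$ is affine on it.

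Finally I derive the contradiction. Let $m=\frac12(y_i+y_j)$. Because $u$ is affine on the segment, $u(m)=\frac12(u(y_i)+u(y_j))$. Using the lower bound at the endpoints and the identity $\frac12(P(y_i)+P(y_j))-P(m)=\frac18|y_i-y_j|^2$, we get
\[
u(m)\ge \tfrac12\bigl(P(y_i)+P(y_j)\bigr)-D=P(m)+\tfrac18|y_i-y_j|^2-D .
\]
Combined with $u(m)\le P(m)$, this gives $|y_i-y_j|^2\le 8D$. This is impossible once the singularities are separated in the sense $\bigl(\sum_k a_k^n\bigr)^{2/n}<\frac{1}{8d_{n,0}}\inf_{i\neq j}|y_i-y_j|_A^2$. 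That is the form of \eqref{eq:sufficient cond for sc} the argument produces; I would double-check the placement of the constant $8d_{n,0}$ against the statement. Once strict convexity is established, smoothness away from $\{y_i\}$ follows from Caffarelli's interior regularity for $\det D^2u=1$.

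The step I expect to need the most care is the Caffarelli step. It must be localized near a possibly unbounded family of Dirac masses, and its conclusion must be that extreme points of $\Sigma$ lie in $\{y_i\}$, not merely that $\Sigma$ has no interior extreme points in the regular region. I would apply it on small balls around each non-singular point of $\Sigma$, using the discreteness of $\{y_i\}$.
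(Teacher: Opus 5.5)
Your argument is essentially the paper's proof. You normalize to $A=I$, sandwich $\frac12|x|^2-d_{n,0}a^2\le u\le\frac12|x|^2$, use Caffarelli's localization to make $u$ affine on a segment joining two singular points, and conclude with the midpoint inequality; your comparison argument for the upper bound fills in a step the paper only asserts.

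Your suspicion about the constant is correct. Both your computation and the paper's give $(y_i-y_j)^{\top}A(y_i-y_j)\le 8d_{n,0}\bigl(\sum_k a_k^n\bigr)^{2/n}$, so the hypothesis actually contradicted is $(8d_{n,0})^{n/2}\sum_k a_k^n<\inf_{i\neq j}\bigl((y_i-y_j)^{\top}A(y_i-y_j)\bigr)^{n/2}$. Since $d_{n,0}=\int_0^\infty\bigl((r^n+1)^{1/n}-r\bigr)\,\ud r>\frac12$, the printed condition, which puts $(8d_{n,0})^{n/2}$ on the separation side, is strictly weaker, and the theorem as printed is not proved by this argument.
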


The constant $(8 d_{n,0})^{\frac{n}{2}} $ in Theorem \ref{prop:strictly convex cond global} is not expected to be optimal.  If the solutions to \eqref{eq:sing global infinite sing} are strictly convex, then they are smooth away from the isolated singularities; see Caffarelli \cite{caffarelli1990ilocalization,caffarelli1990interiorw2p,caffarelli1991regularity}. 
The regularity of the tangent cones at the isolated singularities is also well understood: see G\'alvez-Jim\'enez-Mira  \cite{galvez2015classification} for $n=2$,  and Savin \cite{savin2005obstacle} and Huang-Tang-Wang \cite{huang2024regularity} for general $n$.

The structure of the paper is as follows. 
Section~\ref{sec:abp estimates} revisits the extremal configurations to the Alexandrov estimate in bounded domains. 
In Section~\ref{sec:decay  estimates}, we obtain decay estimates for the solutions. 
Finally, Section~\ref{sec:main results proof} presents the proofs of our main results.

\medskip 

\noindent \textbf{Conflict of interest:} All authors certify that there is no actual or potential conflict of interest about this article.

\section{Alexandrov estimates}\label{sec:abp estimates}

Let  $\Omega \subset \mathbb{R}^n$ be a bounded convex domain. For two convex functions  $u,\varphi\in C(\overline\Omega)$ with $u = \varphi$ on $\partial \Omega$, the Alexandrov estimate (see, e.g., (15) in \cite{jin2025abp}) states that
\begin{equation} \label{eq:abp gene 1/n}
\|u - \varphi\|_{L^{\infty}(\Omega)} \leq C(n) a |\Omega|^{\frac{1}{n}},
\end{equation}
where  $a \geq 0$ is defined by
\begin{equation}\label{defn:a}
\omega_n a^n = |\M u - \M \varphi|(\Omega).
\end{equation} 
The order $a$ in \eqref{eq:abp gene 1/n} is optimal  as $a \to \infty$; the case $\varphi \equiv 0$ reduces to the classical Alexandrov maximum principle.

For a convex function $\varphi\in C(\overline{\Omega})$ and a constant $a\ge 0$, we define the class
\begin{equation*}\label{eq:abp class Da}
\D_{a, \varphi}  = \left\{ w \in C(\overline\Omega):    w  \text{ is convex},\   w=\varphi \text{ on } \partial \Omega, \ \mbox{and } |\M w - \M \varphi|(\Omega)\leq \omega_n a^n \right\}.
\end{equation*}
This definition is symmetric: $w \in \D_{a,\varphi}$ if and only if $\varphi \in \D_{a,w}$. Let us consider the following two specific families of functions in $\D_{a, \varphi} $:
\begin{itemize}
    \item[(i).] solutions to the isolated singularity problem
\begin{equation}\label{eq:defnuay}
\M u_{a}(\cdot, y) =\M\varphi+ \omega_na^n\delta_y  \quad\text{in }   \Omega, \quad u_{a}(\cdot, y) =\varphi \quad\text{on } \partial \Omega,
\end{equation}
where $\delta_y$ represents the Dirac measure centered at $y \in \Omega$. The graphs of these two functions can be illustrated as follows:

\begin{center}
\begin{tikzpicture}[scale=1, line cap=round, line join=round]
  \draw[very thick, red]
    plot[domain=-2:2, samples=200] (\x,{0.5*\x*\x});
  \node[red!80!black] at (-1.6,1.75) {$\varphi$};
\draw[very thick, blue]
    plot[domain=-2:0.5, samples=300] (\x,{0.4*(\x - 1)*(\x - 1) - 1.6});
\draw[very thick, blue]
  plot[domain=0.5:2, samples=300] (\x,{(14.0/15.0)*\x*\x - (26.0/15.0)});
  \node[blue!80!black] at (-2,0.5) {$u_a(\cdot,y)$};
\fill[blue] (0.5,-1.5) circle (1.6pt);
\foreach \k in {1,2,3} {
  \fill[black] (0.5,{-1.5 - 0.15*\k}) circle (1pt);
}
\node[blue!80!black, anchor=north] at (0.5,{-1.5 - 0.5}) {$y$};
\end{tikzpicture}
\end{center}

    \item[(ii).] solutions to the (hyperplane) obstacle problem taking the form\footnote{When $\M\varphi$ has unbounded density, the first equation in \eqref{eq:defnva} is interpreted as
\begin{equation*}\label{eq:defnvaq 1}
\M v_{a}(\cdot, p) = \M v_{a}(\cdot, p)  \quad \text{on } \left\{ v_{a}(\cdot, p) >  \ell_{p}+h_{a,p}\right\},\quad  
\M v_{a}(\cdot, p) \leq \M \varphi .
\end{equation*}}
\begin{equation}\label{eq:defnva}
\M v_{a}(\cdot, p) =\M\varphi \cdot \chi_{\left\{ v_{a}(\cdot, p)> \ell_{p}+h_{a,p}\right\}}   \quad\text{in }   \Omega, \quad v_{a}(\cdot, p) =\varphi\quad\text{on } \partial \Omega ,
\end{equation}
where 
\begin{itemize}
\item $p \in \partial \varphi(\Omega)$ is a subgradient, 
\item $\ell_p(x) = \varphi(x_p) + p\cdot(x-x_p)$ is a support function of $\varphi$ at some $x_p \in \Omega$,  
\item the parameter $h_{a,p} \in [0, \inf_{\partial \Omega } (\varphi-\ell_p)]$  is chosen such that either $h_{a,p} = \inf_{\partial \Omega} (\varphi - \ell_p)$, or the corresponding solution $v_a(\cdot, p)$ satisfies
\begin{equation}\label{eq:defnvaq 2}
\omega_n a^n=|\M v_{a}(\cdot, p) - \M \varphi|(\Omega),
\end{equation}
\item $\chi_E$ is the characteristic function of a set $E$.  
\end{itemize}
The obstacle for \eqref{eq:defnva} is given by $\ell_p + h_{a,p}$, and the set $\{x:\;v_{a}(x, p) = \ell_p + h_{a,p}\}$ is the coincidence set. The graph of the function $v_a(\cdot,p)$ can be illustrated as follows: 
\begin{center}
\begin{tikzpicture}[scale=0.9, line cap=round, line join=round]
  \draw[very thick, red]
    plot[domain=-3:3, samples=200] (\x,{0.5*\x*\x});

  \node[red!80!black] at (-2.6,2.75) {$\varphi$};

  \def\xp{0.5}
  \pgfmathsetmacro{\yp}{0.5*\xp*\xp}  
  \pgfmathsetmacro{\m}{\xp}           

  \draw[very thick, red]
    plot[domain=-1:3, samples=2] (\x,{\yp + \m*(\x-\xp)});
  \node[red!80!black, below] at (2.7,{\yp + \m*(2.7-\xp) - 0.05}) {$\ell_p$};

  \fill[red!80!black] (\xp,\yp) circle (2pt);
  \node[red!80!black, anchor=north west, xshift=2pt, yshift=4pt]
        at (\xp,\yp) {\scriptsize $(x_p,\;\varphi(x_p))$};

  \def\hoffset{1.5} 
  \draw[very thick, red]
    plot[domain=-2:3, samples=2] (\x,{\yp + \m*(\x-\xp) + \hoffset});
  \node[red!80!black, anchor=north west, xshift=-5pt] at (2.7,{\yp + \m*(2.7-\xp) + \hoffset - 0.05})
        { $\ell_p + h_{a,p}$};
        
  \draw[very thick, blue]
    plot[domain=-3:0, samples=200] (\x,{(25.0/72.0)*\x*\x + 11/8.0});
  \node[blue!80!black, anchor=north west, xshift=7pt]
        at (-2.9,{(25.0/72.0)*(-2.9)*(-2.9) + 11/8.0})
        {$v_a(\cdot, p)$};
  \draw[very thick, blue]
    plot[domain=1:3, samples=200] (\x,{(1.0/3.0)*\x*\x - (1.0/48.0)*\x + 25/16.0});
  \node[blue!80!black, anchor=west, xshift=3pt]
        at (3,{(1.0/3.0)*3*3 - (1.0/48.0)*3 + 25/16.0})
        {};

  \draw[very thick, blue]
    plot[domain=0:1, samples=2] (\x,{0.5*\x + 11/8.0});
  \node[blue!80!black, anchor=west, xshift=2pt]
        at (1,{0.5*1 + 11/8.0})
        { };
        
\end{tikzpicture}
\end{center}
\end{itemize}  
 
\begin{Theorem}[{\cite[Theorem 3.1]{jin2025abp}}]\label{thm:abp extremal}
Let $a\ge 0$ be a constant, and $\varphi\in C(\overline{\Omega})$ be a convex function. 
For each $y \in \Omega$, we have
\begin{equation*}\label{eq:pointwise ext lower}
\inf_{w \in \D_{a, \varphi} } w(y )=u_a(y ,y ),
\end{equation*}
and
\begin{equation*}\label{eq:pointwise ext upper}
\sup_{w\in \D_{a, \varphi} } w(y )=\sup_{p \in \partial \varphi (\Omega)}v_{a}(y ,p),
\end{equation*}
where the supremum on the right-hand side of \eqref{eq:pointwise ext upper} is actually achieved by some $v_{a}(\cdot,p)$ having $y $ in its coincidence set.
\end{Theorem}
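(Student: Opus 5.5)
Both identities are pointwise extremal problems over $\D_{a,\varphi}$, and I would treat them separately: the lower bound through a comparison of subgradient images, the upper bound through Legendre duality, which turns it into a problem of the first type.

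\textbf{Lower bound.} Since $u_a(\cdot,y)\in\D_{a,\varphi}$, it suffices to show $w(y)\ge u_a(y,y)$ for every $w\in\D_{a,\varphi}$.
\begin{enumerate}[label=(\arabic*)]
\item Suppose instead that $w(y)<u_a(y,y)$, and set $\delta:=u_a(y,y)-w(y)>0$.
\item Consider the open set $U=\{w<u_a(\cdot,y)\}$. It contains $y$, and $w=u_a(\cdot,y)$ on $\partial U$ (or $\partial U\subset\partial\Omega$, where both equal $\varphi$).
\item By the standard comparison of subgradient images, $\partial u_a(\cdot,y)(U)\subset\partial w(U)$ up to null sets. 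This gives $\M w(U)\ge \M u_a(U)=\M\varphi(U)+\omega_n a^n$.
\item Since $|\M w-\M\varphi|(\Omega)\le\omega_n a^n$, this forces equality. So $\M w=\M\varphi+\omega_n a^n$ on $U$ and $\M w=\M\varphi$ off $U$.
\item To get the contradiction, I would refine step (3) at the vertex. The subgradient of $u_a$ at $y$ is a convex set of volume $\omega_n a^n$. Every slope $p\in\partial u_a(y,y)$ yields a supporting plane of $u_a$ at $y$; lowering it by $\delta$ gives a plane lying below $w$ on $\partial U$ and touching $w$ at height at most $w(y)$.
\item Tilting such planes by small vectors produces an additional set of slopes, of positive measure, inside $\partial w(U)$. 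Hence $\M w(U)>\M u_a(U)$, the desired contradiction.
\end{enumerate}
An alternative route is to take a limit over $\varepsilon$ of approximate problems and use uniqueness of the Dirichlet problem.

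\textbf{Upper bound.} Here I would argue by Legendre transform. Near a maximizer $y$, the function $w$ lies above $\varphi$ at $y$.
\begin{enumerate}[label=(\arabic*)]
\item Pass to the duals $w^*$ and $\varphi^*$ on the slope domain. An upper deviation of $w$ corresponds to a concentration of mass where $w^*$ has a flat face. The obstacle solution $v_a(\cdot,p)$ is, dually, exactly the isolated singularity solution, with the Dirac mass located at the slope $p$.
\item Directly: fix $w\in\D_{a,\varphi}$ and let $\ell$ be a supporting plane of $w$ at $y$, with slope $p$. Let $p$ also denote the corresponding slope of $\varphi$ at a point $x_p$ where $\varphi-\ell$ is minimized; I would choose $p$ via the slope of the support of $w$ at $y$.
\item Compare $w$ with $v_a(\cdot,p)$ on the set $\{w>v_a(\cdot,p)\}$. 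On the contact set of $v_a$, the function $v_a$ equals the plane $\ell_p+h_{a,p}$.
\item The subgradient images over the region where $w$ exceeds the obstacle level show that $\M\varphi-\M w$ has mass at least equal to the volume of slopes "lost" by the flat region. This is bounded by $\omega_n a^n$, which forces $w(y)\le v_a(y,p)$.
\end{enumerate}

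\textbf{Existence of the maximizer.} Finally, the supremum over $p$ is attained. The map $p\mapsto v_a(y,p)$ is upper semicontinuous by stability of the obstacle problem, and its domain can be compactified. Moreover, the optimal $p$ must place $y$ in the coincidence set: otherwise, shifting $x_p$ toward $y$ increases $v_a(y,p)$.

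\textbf{Main obstacle.} I expect the hardest step to be the comparison argument for the upper bound, namely identifying the correct slope $p$ and controlling the measure of lost slopes. I would handle it by applying the lower-bound argument to the Legendre duals, where the two problems are interchanged.
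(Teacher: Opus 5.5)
The paper imports this theorem from the authors' earlier work without proof, so I am judging your argument on its own terms. The substantive half, the upper bound, is not established.

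\textbf{The upper bound.} Your fallback, Legendre duality, does not interchange the two problems on a bounded domain with Dirichlet data. For $v=v_a(\cdot,p)$ one has $\partial v^*(p)=K_{a,p}$, so the atom of $\M v^*$ at $p$ has mass $|K_{a,p}|$, not $\M\varphi(K_{a,p})$. The remaining part of $\M v^*$ has density $1/f(\nabla v^*)$ rather than $1/f(\nabla \varphi^*)$, where $\M\varphi=f\,dx$. Moreover, $v^*$ lives on the $v$-dependent set $\partial v(\Omega)$, and $w=\varphi$ on $\partial\Omega$ becomes a second-boundary-type condition. So $\D_{a,\varphi}$ is not mapped to a class of the same form, and the fixed-domain comparison behind the lower bound cannot be run on the duals. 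The clean correspondence $W_a\leftrightarrow W_a^*$ is special to entire solutions with $\M\varphi=\LL$.

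Your direct route starts correctly but never isolates the key reduction, and it stops at a non-strict inequality. Here is how to close it.
\begin{enumerate}[label=(\alph*)]
\item If $w(y)\le\varphi(y)$ there is nothing to prove, since $v_a(\cdot,p)\ge\varphi$.
\item If $w(y)>\varphi(y)$, take $p\in\partial w(y)$ with support plane $\ell$. Then $\varphi-\ell$ is negative at $y$ and nonnegative on $\partial\Omega$. Hence $p\in\partial\varphi(x_p)$ for an interior $x_p$, and $\ell=\ell_p+h$ with $0<h\le\inf_{\partial\Omega}(\varphi-\ell_p)$.
\item Since $w(y)=\ell_p(y)+h$ and $v_a(y,p)\ge \ell_p(y)+h_{a,p}$, everything reduces to showing $h\le h_{a,p}$.
\item Suppose $h>h_{a,p}$. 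Pick $h'\in(h_{a,p},h)$ and let $v'$ be the obstacle solution with obstacle $\ell_p+h'$ and coincidence set $K'$. Then $w\ge \ell_p+h>v'$ on $K'$, so $K'\subset E:=\{w>v'\}$.
\item The inclusion $\partial w(E)\subset\partial v'(E)$ gives $|\M w-\M\varphi|(\Omega)\ge(\M\varphi-\M w)(E)\ge(\M\varphi-\M v')(E)=|\M v'-\M\varphi|(\Omega)$.
\item The last quantity exceeds $\omega_n a^n$, because the deficit increases strictly with the height. Indeed, coincidence sets are nested, and equal deficits would force equal solutions by uniqueness, hence equal heights. This contradicts $w\in\D_{a,\varphi}$.
\end{enumerate}
Comparing with $v_a(\cdot,p)$ itself, as you propose, yields only $|\M w-\M\varphi|(\Omega)\ge\omega_n a^n$. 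That is compatible with $w\in\D_{a,\varphi}$. Your phrase ``which forces $w(y)\le v_a(y,p)$'' hides exactly this missing strictness.

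\textbf{The lower bound.} Steps (1)--(4) are fine. The tilting in (5)--(6), however, need not produce new slopes. The tilted slopes lie in a $\delta/\operatorname{diam}\Omega$-neighbourhood of the subdifferential $P$ of $u_a(\cdot,y)$ at $y$. When $\M\varphi$ has positive density near $y$, $\partial u_a(\cdot,y)(U)$ already contains a neighbourhood of $P$. For instance, for $\varphi=\frac12|x|^2$ on a ball centred at $y$, the image of $B_\epsilon(y)$ is the ball of radius $(a^n+\epsilon^n)^{1/n}$. Two fixes are available.
\begin{itemize}
\item Use the standard perturbation. With $d=\operatorname{diam}\Omega$ and $\epsilon=\delta/(2d^2)$, compare $w$ with $\tilde u:=u_a(\cdot,y)+\epsilon(|x-y|^2-d^2)$ on $G:=\{w<\tilde u\}\ni y$. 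This gains the extra mass $(2\epsilon)^n|G|>0$.
\item Alternatively, make your approximation remark precise. Choose $a'>a$ with $w(y)<u_{a'}(y,y)$; this is possible by continuity in $a'$, which follows from monotonicity and uniqueness. Then $(\M w-\M\varphi)(U')\ge\omega_n a'^n>\omega_n a^n$ on $U'=\{w<u_{a'}(\cdot,y)\}$.
\end{itemize}

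\textbf{Attainment.} Your argument does not work as stated. $\partial\varphi(\Omega)$ need not be bounded, and $x_p$ is tied to $p$, so ``shifting $x_p$ toward $y$'' is not a move. Instead, take a maximizer $w^\star$ of $w(y)$ over $\D_{a,\varphi}$. This needs compactness of the class: interior Lipschitz bounds, barriers at $\partial\Omega$, and lower semicontinuity of total variation. For $p\in\partial w^\star(y)$, the chain $w^\star(y)=\ell_p(y)+h\le\ell_p(y)+h_{a,p}\le v_a(y,p)\le w^\star(y)$ collapses. This gives both attainment and $y\in K_{a,p}$.
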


Using this theorem, we obtained extremal Alexandrov estimates in \cite{jin2025abp} under the nondegeneracy condition
\begin{equation}\label{eq:nondegeneracy}
0<\lambda \le \det D^{2}\varphi \le \Lambda<\infty.
\end{equation}
In particular, for $n\ge 3$, Theorem 1.1 in \cite{jin2025abp}, when stated on a section of $\varphi$, implies the following affine invariant estimate.

\begin{Theorem}\label{thm:ordera2}
Let $n\ge 3$, $\epsilon\in(0,1)$, $\Omega\subset \mathbb{R}^{n}$ be a bounded convex domain, and  $\varphi\in C(\overline{\Omega})$ be a convex function satisfying \eqref{eq:nondegeneracy} in $\Omega$ and $\varphi=0$ on $\partial\Omega$. 
For $t\in(0,1)$, define the sublevel sets
\[
\Omega_{t}:=\{x\in\Omega:\ \varphi(x)<(1-t) \inf_{\Omega}\varphi\}.
\]
Then there exists a constant $C(n,\epsilon,\lambda,\Lambda)>0$, depending only on $n,\epsilon,\lambda,$ and $\Lambda$, such that for every $t\in(0,1-\epsilon)$ and every convex function $u\in C(\overline{\Omega_{t}})$ satisfying $u=\varphi$ on $\partial\Omega_{t}$, one has
\[
\left\|u-\varphi\right\|_{L^{\infty}(\Omega_t)}
\le
C(n,\epsilon,\lambda,\Lambda)\Bigl(\omega_{n}^{-1}\,|\M u-\M \varphi|(\Omega_{t})\Bigr)^{\frac{2}{n}}.
\]
\end{Theorem}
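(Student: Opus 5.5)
The plan is to reduce Theorem~\ref{thm:ordera2} to the extremal characterization in Theorem~\ref{thm:abp extremal}, and then to bound the two extremal families uniformly using affine invariance together with Caffarelli's interior theory.

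\textbf{Step 1: normalization.} Set $\varphi_t:=\varphi-(1-t)\inf_\Omega\varphi$, so that $\varphi_t=0$ on $\partial\Omega_t$. Put $h_t:=-(1-t)\inf_\Omega\varphi$, which is the depth of the section $\Omega_t$. By John's lemma there is a unimodular-up-to-scaling affine map $T$ with $B_{1}\subset T\Omega_t\subset B_{n}$. Rescale
\[
\tilde u(x)=h_t^{-1}(u-\ell)(T^{-1}x),
\]
and similarly $\tilde\varphi$, so that $\tilde\varphi=0$ on the boundary, $\min\tilde\varphi=-1$, and $\det D^2\tilde\varphi\in[\lambda',\Lambda']$ on a normalized domain, with $\lambda',\Lambda'$ depending only on $n,\lambda,\Lambda$; this uses that $|\Omega_t|^{2/n}\sim h_t$ for sections. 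The quantity $\|u-\varphi\|_\infty/(\omega_n^{-1}|\M u-\M\varphi|)^{2/n}$ transforms exactly: $L^\infty$ scales by $h_t$, the measure scales by $h_t^{n}|\det T|^{-2}\cdot|\det T|$-type factors, and the exponent $2/n$ is precisely what makes the ratio invariant once $|\det T|^{-2/n}\sim h_t$. So it suffices to prove the estimate on normalized sections, uniformly over $t<1-\epsilon$.

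\textbf{Step 2: apply Theorem~\ref{thm:abp extremal}.} With $a$ defined by $\omega_na^n=|\M u-\M\varphi|(\Omega_t)$, we have $u\in\D_{a,\varphi}$, hence
\[
u_a(y,y)\le u(y)\le \sup_p v_a(y,p).
\]
It therefore suffices to show two bounds:
\[
\varphi(y)-u_a(y,y)\le Ca^2, \qquad v_a(y,p)-\varphi(y)\le Ca^2 .
\]
For the \emph{lower bound}, I would use a comparison with the radial profile. Near $y$, $\varphi$ is comparable to a quadratic, by Caffarelli's $C^{1,\alpha}$ and strict convexity on normalized sections; this step needs $t<1-\epsilon$, so that $y$ stays away from the degenerate regime. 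I would compare $u_a(\cdot,y)$ with the function obtained from $\varphi+W_{ca}(x-y)-\frac{c'}{2}|x-y|^2$-type barriers; this should give $\varphi-u_a\lesssim a^2\int_0^{R/a}\bigl((s^n+1)^{1/n}-s\bigr)\,ds\lesssim a^2$. The integral converges exactly because $n\ge3$, since $(s^n+1)^{1/n}-s\sim s^{1-n}$. When $a$ is not small relative to the section, the classical estimate \eqref{eq:abp gene 1/n} gives $Ca\le Ca^2$ for $a\gtrsim1$, which covers that case.

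For the \emph{upper bound}, $v_a-\varphi$ is maximized on the coincidence set. There the mass deficit $\omega_na^n$ is at least $\lambda$ times the coincidence volume. That volume is comparable to $h^{n/2}$, where $h$ is the height of the obstacle above $\ell_p$ and is taken inside a section of $\varphi$ at $x_p$. Hence $h\lesssim a^2$, and $v_a-\varphi\le h+O(a^2)$ by convexity plus the Alexandrov comparison outside.

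\textbf{Main obstacle.} The hardest step is the uniform control when $y$ or $x_p$ is near $\partial\Omega_t$. There the sections of $\varphi$ are not balanced around the point, so the estimate $h^{n/2}\sim$ volume must come from engulfing and doubling properties of sections, which follow from \eqref{eq:nondegeneracy}. The constraint $t<1-\epsilon$ keeps $\Omega_t$ compactly inside $\Omega$, and this is what makes those properties available with constants depending on $\epsilon$. I would first try to deduce this step directly by citing Theorem 1.1 of \cite{jin2025abp} on $\Omega_t$, treated as a domain where $\varphi$ has interior sections of controlled shape.
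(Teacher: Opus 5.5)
Your Step~1 and your closing fallback are essentially all the paper does. The statement is not proved independently here: it is read off from Theorem~1.1 of \cite{jin2025abp}, applied on the section $\Omega_t$ via affine invariance, with $t<1-\epsilon$ keeping $\Omega_t$ uniformly inside a larger section of $\varphi$. There is a slip in your normalization: the depth of $\Omega_t$ is $-t\inf_\Omega\varphi$, not $-(1-t)\inf_\Omega\varphi$. With your $h_t$ the normalized minimum is $-t/(1-t)$ rather than $-1$, and the normalized determinant bounds degenerate as $t\to0$.

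The self-contained Step~2, which you present as the main route, has two genuine gaps, although reducing to the extremal families via Theorem~\ref{thm:abp extremal} is the right first move. The first gap is in the lower bound. You assume $\varphi$ is comparable to a quadratic near $y$, but \eqref{eq:nondegeneracy} gives only $C^{1,\alpha}$ regularity and strict convexity: $D^2\varphi$ may be unbounded, and the sections $S^\varphi_h(y)$ may have eccentricity blowing up as $h\to0$. Even for bounded $D^2\varphi$ the barrier fails. To stay below $\varphi+O(a^2)$ on $\partial\Omega_t$, the subtracted quadratic must absorb the full growth $\frac12|x-y|^2$ of $W_{ca}(x-y)$. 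Then for $|x-y|\gg a$ the leading term of $\det\bigl(D^2\varphi+D^2W_{ca}-I\bigr)-\det D^2\varphi$ is $\det D^2\varphi\,\mathrm{tr}\bigl((D^2\varphi)^{-1}(D^2W_{ca}-I)\bigr)$. Since $D^2W_{ca}-I$ has a negative radial eigenvalue, this term can be negative unless $D^2\varphi$ is nearly constant near $y$ and the profile is affinely adapted to it. That is exactly why the radial comparison \eqref{eq:dna2 uaap} is only available for $\varphi\in\C_+^{2,\alpha}$.

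The second gap is in the upper bound. Comparison gives $v_a\ge\varphi$, hence $K\subset\overline{S^\varphi_h(x_p)}$, which yields only $|K|\lesssim h^{n/2}$. Your conclusion $h\lesssim a^2$ needs the reverse inequality $|K|\gtrsim h^{n/2}$, which you do not justify, and it does not follow from section geometry. Comparison in $\Omega\setminus K$ together with $\varphi\ge\ell_p$ gives $\sup(v_a-\varphi)=h$ exactly. So $|K|\gtrsim h^{n/2}$ is equivalent to the estimate you are trying to prove for the upper extremal. Likewise, the ``Alexandrov comparison outside'' only yields $O(a)$ through \eqref{eq:abp gene 1/n}. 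Moreover, your argument never uses $n\ge3$, yet the claimed comparability is false for $n=2$. For $\varphi=\frac12|x|^2$ on $B_1\subset\R^2$ and $p=0$, the radial obstacle solution has $v_a'(r)=(r^2-a^2)^{1/2}$ for $r>a$, so $|K|=\pi a^2$ while $h=\frac{a^2}{2}\log\frac1a+O(a^2)$.

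What is missing in both halves is a genuine decay estimate for the perturbation across sections of $\varphi$ of height $H\ge a^2$ around $y$ (respectively $x_p$). At that scale the mass $\omega_na^n$ contributes $\lesssim a^nH^{1-n/2}$, and this sums to $\lesssim a^2$ precisely because $n\ge3$; for $n=2$ the dyadic scales add up to $a^2\log\frac1a$. The estimate must also hold with constants depending only on $n,\lambda,\Lambda$ for non-smooth $\varphi$. That is the content of Theorem~1.1 of \cite{jin2025abp}, so your fallback citation is the only part of the plan that actually closes the argument. Relatedly, the main difficulty is not near $\partial\Omega_t$, which $t<1-\epsilon$ handles as you note, but in this interior decay.
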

Therefore, part (ii) of Theorem \ref{thm:abp global rigidity} can be viewed as a global version of Theorem \ref{thm:ordera2} with a sharp constant.

Under the regularity assumptions $\partial \Omega \in C^{2,\alpha}$ and $\varphi \in \C_+^{2,\alpha}(\overline{\Omega})$,
where $\alpha \in (0,1)$ and $\C_+^{2,\alpha}$ denotes the space of $C^{2,\alpha}$ functions with \emph{positive definite Hessians}, we in 
\cite[(49) and (58)]{jin2025abp} established sharp pointwise bounds for the extremal functions. 
For simplicity, assuming $n\geq 3$, $\varphi(0)=0$, $\nabla \varphi(0)= 0$, and $\det D^2 \varphi(0)=1$, then we obtained:
\begin{equation}\label{eq:dna2 uaap}
\varphi(x) \geq u_{a}(x,0)  \geq \varphi(x) -\min\left\{C \frac{ a^{n}}{\abs{x}^{n-2}},\; d_{n,0}a^2 + C a^{2+\beta} \right\} \quad  \text{in } \Omega
\end{equation}
and
\begin{equation}\label{eq:dna2 obs ap x}
\varphi(x) \leq v_{a}(x,0)  \leq \varphi(x) + \min\left\{ C\frac{ a^{n}}{\abs{x}^{n-2}},\; d_{n,0}a^2 + C a^{2+\beta} \right\} \quad \text{in } \Omega,
\end{equation} 
where $\beta = \frac{(n-2)\alpha}{n+\alpha}$, where $C> 0$ depends only on $n$, $\alpha$, $\operatorname{dist}(0,\partial\Omega)$,  $\operatorname{diam}(\Omega)$, $\left\|\partial \Omega\right\|_{C^{2,\alpha}}$, $\left\|D^2 \varphi\right\|_{C^{\alpha}(\overline{\Omega})}$ and $\left\|(D^2 \varphi)^{-1}\right\|_{L^\infty(\Omega)}$.
Moreover,

\begin{Theorem}[{\cite[Theorem 1.2  for $n\ge 3$]{jin2025abp}}]\label{thm:ordera2 more}
There exists $C>0$ (dependence specified below) such that for every convex $u \in C(\overline{\Omega})$ with $u=\varphi$ on $\partial \Omega$ and for every $x_0 \in \Omega$, we have: 
\begin{equation*}\label{eq:perturb-result}
- \lambda_0^{-1}   - C a^{ \beta}   \leq \frac{u(x_0) - \varphi(x_0)}{d_{n,0}a^2} \leq \lambda_0  + C a^{ \beta} ,  
\end{equation*} 
where $\lambda_0= (\det D^2 \varphi(x_0))^{1/n}$, $a\geq 0$ is as in \eqref{defn:a}, $\beta = \frac{(n-2)\alpha}{n+\alpha}$ and $d_{n,0}$ is given by \eqref{eq:def dn00}.
\end{Theorem}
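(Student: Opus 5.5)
The plan is to reduce the estimate, via Theorem \ref{thm:abp extremal}, to bounds on the two extremal families $u_a(\cdot,y)$ and $v_a(\cdot,p)$. Those bounds are already available in normalized form in \eqref{eq:dna2 uaap} and \eqref{eq:dna2 obs ap x}. What remains is to carry out the affine normalization carefully, so that the factor $\lambda_0^{\mp1}$ appears and the error stays of relative size $a^\beta$.

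\emph{Step 1 (reduction to extremals).} Let $\omega_n a^n=|\M u-\M\varphi|(\Omega)$, so that $u\in\D_{a,\varphi}$. Theorem \ref{thm:abp extremal} gives
\[
u_a(x_0,x_0)\le u(x_0)\le \sup_{p\in\partial\varphi(\Omega)} v_a(x_0,p),
\]
and the supremum is attained by some $v_a(\cdot,p)$ whose coincidence set contains $x_0$. It therefore suffices to prove a lower bound for $u_a(x_0,x_0)-\varphi(x_0)$ and an upper bound for $v_a(x_0,p)-\varphi(x_0)$ for this particular $p$.

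\emph{Step 2 (normalization and the lower bound).} Subtract the tangent plane of $\varphi$ at $x_0$ and translate $x_0$ to $0$. Then apply the unimodular map $T=\lambda_0^{1/2}(D^2\varphi(x_0))^{-1/2}$, followed by the dilation $x\mapsto \lambda_0^{-1/2}x$. This produces $\hat\varphi$ with $\hat\varphi(0)=0$, $\nabla\hat\varphi(0)=0$ and $D^2\hat\varphi(0)=I$.

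Under a dilation $w\mapsto w(s\,\cdot)$ the Monge--Amp\`ere mass is multiplied by $s^n$, while pointwise values are unchanged. Taking $s=\lambda_0^{-1/2}$, the defect parameter becomes $\hat a=\lambda_0^{-1/2}a$. Applying \eqref{eq:dna2 uaap} at $x=0$ then yields
\[
u_a(x_0,x_0)-\varphi(x_0)\ge -d_{n,0}\hat a^2-C\hat a^{2+\beta}=-d_{n,0}a^2\bigl(\lambda_0^{-1}+Ca^\beta\bigr).
\]
The constant $C$ depends only on the quantities listed after \eqref{eq:dna2 obs ap x}. These are controlled uniformly because $\lambda_0$ is bounded above and below in terms of $\|D^2\varphi\|_{C^\alpha}$ and $\|(D^2\varphi)^{-1}\|_{L^\infty}$.

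\emph{Step 3 (upper bound via the obstacle extremal).} This is the step I expect to be the main obstacle. The maximizing $v_a(\cdot,p)$ is normalized at its own contact point $x_p$, not at $x_0$. So I would first show that the coincidence set has diameter $O(a)$, comparing its volume, of order $a^n/\det D^2\varphi$, with the nondegeneracy of $\varphi$. This gives $|x_0-x_p|\le Ca$. I would then normalize at $x_p$ as in Step 2 and apply \eqref{eq:dna2 obs ap x}.

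The mismatch between $\det D^2\varphi(x_p)$ and $\lambda_0^n$ is $O(a^\alpha)\le O(a^\beta)$ by the $C^\alpha$ bound on $D^2\varphi$, and it is absorbed into the error term. The value of the leading factor ($\lambda_0$ rather than $\lambda_0^{-1}$) comes from how the removed mass $\M\varphi\,\chi_{\{\text{coincidence}\}}$ scales under the normalization. The clean way to track it is Legendre duality, which converts $v_a(\cdot,p)$ into an isolated-singularity solution for $\varphi^*$, with $\det D^2\varphi^*(p)=\lambda_0^{-n}$.

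I would check this scaling against the explicit pair $W_a,W_a^*$, using the identity that both have asymptotic deviation $d_{n,0}$ when $a=1$. That fixes the leading constant $d_{n,0}a^2$ times the appropriate power of $\lambda_0$, and combining it with Step 2 gives the stated two-sided bound.
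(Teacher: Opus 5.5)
Your Steps 1--2 follow the route the paper points to (Theorem \ref{thm:abp extremal} plus the extremal bounds quoted just before the statement). Your computation $\hat a^2=\lambda_0^{-1}a^2$ for the lower bound is correct, at least for $x_0$ away from $\partial\Omega$ (see below).

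\textbf{Step 3 rests on a scaling that does not exist.} The mass removed by the obstacle extremal is $\M\varphi(K)$, and this is exactly the quantity $\omega_n a^n$ measures. Under $x\mapsto x_p+(D^2\varphi(x_p))^{-1/2}x$ it rescales by the same factor $(\det D^2\varphi(x_p))^{-1/2}$ as the Dirac mass did in your Step 2. So \eqref{eq:dna2 obs ap x} gives $v_a(x_0,p)-\varphi(x_0)\le d_{n,0}a^2\lambda_p^{-1}+Ca^{2+\beta}$ with $\lambda_p=(\det D^2\varphi(x_p))^{1/n}$. The coefficient is $\lambda_0^{-1}$, not $\lambda_0$.

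Legendre duality says the same thing. The dual problem has density $\lambda_0^{-n}$ and Dirac mass $|K|=\omega_n(a/\lambda_0)^n$, so Step 2 gives $d_{n,0}(a/\lambda_0)^2\lambda_0=d_{n,0}a^2\lambda_0^{-1}$.

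The explicit check you propose shows this is sharp. Take $\varphi=\frac{\lambda}{2}|x|^2$ on $B_1$ and $v=c+\frac{a^2}{\lambda}W_1^*(\lambda x/a)$ with $v=\varphi$ on $\partial B_1$. Then $|\M v-\M\varphi|(B_1)=\lambda^n|B_{a/\lambda}|=\omega_n a^n$, and $v(0)-\varphi(0)=\frac{a^2}{\lambda}\bigl(\frac12|y|^2-W_1^*(y)\bigr)$ with $y=\lambda e_1/a$. Hence $\frac{v(0)-\varphi(0)}{d_{n,0}a^2}\to\lambda^{-1}$ as $a\to0$. For $\lambda<1$ this exceeds $\lambda+Ca^\beta$ once $a$ is small, so the upper bound with $\lambda_0$ cannot be proved as printed. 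What your argument can and should give is $\lambda_0^{-1}$ on both sides. That implies the printed bound only when $\det D^2\varphi(x_0)\ge1$, which is harmless in this paper since the theorem is only applied with $\det D^2\varphi=1$. As written, Step 3 asserts a factor that its own proposed check refutes.

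\textbf{The second gap is uniformity in $x_0$.} The constants in \eqref{eq:dna2 uaap} and \eqref{eq:dna2 obs ap x} depend on $\operatorname{dist}(0,\partial\Omega)$, which after your translations means $\operatorname{dist}(x_0,\partial\Omega)$ or $\operatorname{dist}(x_p,\partial\Omega)$. The $C$ in the theorem does not depend on $x_0$, and bounds on $\lambda_0$ do not help here. So Steps 2--3 only prove the estimate on compact subsets of $\Omega$, and you need an extra argument near $\partial\Omega$. One option:
\begin{itemize}
\item Extend $\varphi$ to a uniformly convex $C^{2,\alpha}$ function $\tilde\varphi$ on a larger uniformly convex $C^{2,\alpha}$ domain $\Omega'$, with $\operatorname{dist}(\Omega,\partial\Omega')$ bounded below by the data.
\item Let $U$ solve the isolated-singularity problem on $\Omega'$ with pole $x_0$ and data $\tilde\varphi$. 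Then $U\le\tilde\varphi=u_a(\cdot,x_0)$ on $\partial\Omega$ and $\M U=\M u_a(\cdot,x_0)$ in $\Omega$, so $U\le u_a(\cdot,x_0)$ in $\Omega$. Now \eqref{eq:dna2 uaap} applies to $U$ with a uniform constant.
\item The obstacle side needs an analogous equal-mass comparison (cf.\ Lemma \ref{lem:comparison principle equal volume}). It must also cover the case $h_{a,p}=\inf_{\partial\Omega}(\varphi-\ell_p)$, where the coincidence set reaches $\partial\Omega$.
\end{itemize}

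\textbf{A smaller point: $|x_0-x_p|\le Ca$ does not follow from $|K|$ being of order $a^n$.} A thin convex set can have small volume and large diameter. Instead use $\varphi\le\ell_p+h_{a,p}$ on $K$ together with $h_{a,p}\le v_a(x_p,p)-\varphi(x_p)\le Ca^2$, which holds by \eqref{eq:dna2 obs ap x}. This gives $K\subset\overline{S^{\varphi}_{Ca^2}(x_p)}\subset B_{Ca}(x_p)$. The classical $O(a)$ bound would only give $B_{Ca^{1/2}}(x_p)$ and an error of order $a^{\alpha/2}$, which is worse than $a^\beta$ when $n\ge5$.
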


\begin{Theorem}[{\cite[Theorem 1.3]{jin2025abp}}]\label{thm:rigidity}
There exists $\varepsilon_0>0$  such that for every $x_0\in\Omega$, every $\rho \in (0,1)$ and every convex $u \in C(\overline{\Omega})$ with $u = \varphi$ on $\partial \Omega$ satisfying $a \leq \varepsilon_0$ (where $a$ is defined in \eqref{defn:a}), there exist positive constants $c_\rho$ and  $C_{\rho}$ such that 
\begin{align*}
\frac{ u(x_0)-\varphi(x_0)}{d_{n,0}a^2} 
& \geq -\lambda_{0}^{-1}-C_{\rho}a^{\beta}+c_{\rho}\left(\frac{\omega_na^n-\mu \left( E_A\left(x_0, \rho a\right) \cap\Omega\right) }{\omega_na^n} \right)^\frac{n}{2},\\
\frac{ u(x_0)-\varphi(x_0)}{d_{n,0}a^2} 
& \leq \lambda_{0} +C_{\rho}a^{\beta}-c_{\rho}\left(\frac{\omega_na^n+\mu \left( E_A(x_0, (1+\rho)a)\cap \Omega\right)}{ \omega_na^n} \right)^n, 
\end{align*}
where $\lambda_0,\beta,d_{n,0}$ are as in Theorem \ref{thm:ordera2 more}, $A= D^2 \varphi(x_0)$, and $E_A(x, r)$ is defined in Theorem \ref{thm:global decay}.
\end{Theorem}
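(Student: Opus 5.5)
The plan is to reduce the estimate to the pointwise extremal characterization of Theorem \ref{thm:abp extremal}, with a refined comparison that keeps track of where the defect measure $\mu=\M u-\M\varphi$ sits relative to $x_0$. By an affine normalization we may assume $x_0=0$, $\varphi(0)=0$, $\nabla\varphi(0)=0$ and $D^2\varphi(0)=\lambda_0 A$; after the unimodular change of variables that sends $E_A(0,r)$ to $B_r$, we may take $A=\lambda_0 I$. We then rescale by $x\mapsto ax$, $u\mapsto a^{-2}u$. Since $a\le\varepsilon_0$ and $\varphi\in\C^{2,\alpha}_+$, the rescaled $\varphi$ is $C^{2,\alpha}$-close to the quadratic $\frac{\lambda_0}{2}|x|^2$ on a domain of size $\sim a^{-1}$. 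The rescaled defect measure has total variation $\omega_n$. All error terms of the form $C_\rho a^\beta$ come from this approximation, exactly as in \eqref{eq:dna2 uaap}--\eqref{eq:dna2 obs ap x} and Theorem \ref{thm:ordera2 more}. So the heart of the matter is a statement about the model $\varphi=\frac{\lambda_0}{2}|x|^2$ on (essentially) $\R^n$.

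For the lower bound, we first use the comparison principle to replace $u$ by the convex $w$ solving $\M w=\M\varphi+\mu^+$ with $w=\varphi$ on $\partial\Omega$. Adding the positive part of the defect only lowers the solution, so $u\ge w$. We then split $\mu^+=\mu^+\llcorner B_\rho+\mu^+\llcorner B_\rho^c$, and we write $\theta:=(\omega_n-\mu(B_\rho))/\omega_n\in[0,2]$. If $\mu^+(B_\rho^c)$ is small but $\theta$ is not, then part of the mass is negative, and we are in the regime where $\mu^+$ has total mass $<\omega_n$. There Theorem \ref{thm:abp extremal}, applied with the smaller parameter $a'$ given by $a'^n=\mu^+(\R^n)/\omega_n$, gives $w(0)\ge u_{a'}(0,0)\approx -\lambda_0^{-1}d_{n,0}a'^2$, which yields a gain of order $\theta$. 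Otherwise a definite fraction of the positive mass sits outside $B_\rho$. In that case we compare $w$ with the solution carrying an isolated singularity of mass $\mu^+(B_\rho)$ at $0$, plus the influence of the far mass. The far mass contributes at $0$ only through the decay bound $C a^n/|x|^{n-2}$ of \eqref{eq:dna2 uaap}, evaluated at distance $\ge\rho$.

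The main obstacle is turning this heuristic into the exact gain $c_\rho\theta^{n/2}$ (resp.\ $c_\rho(\cdot)^n$ for the upper bound), because Monge--Amp\`ere is not linear and superposition of singularities fails. To get around this I would argue by contradiction and compactness. Suppose there are rescaled configurations whose values at $0$ approach the sharp value $-\lambda_0^{-1}d_{n,0}$, while $\theta\ge\theta_0>0$. Using the stability of Alexandrov solutions under weak convergence of measures, together with the uniform decay estimate \eqref{eq:dna2 uaap} to prevent mass escaping to infinity, we pass to a global limit on $\R^n$. This limit is a convex function asymptotic to $\frac{\lambda_0}{2}|x|^2$, has defect of total mass $\le\omega_n$, attains the extremal value at $0$, and still has a $\theta_0$-portion of its mass outside $B_\rho$. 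The equality case in Theorem \ref{thm:abp extremal} (the infimum is uniquely realized by $u_a(\cdot,y)$ with the full mass at $y$) rules this out. This gives a gain $\eta(\theta_0,\rho)>0$. Finally, the power $n/2$ comes from an explicit check on the model: displacing a mass fraction $\theta$ changes the extremal value at least like the $\frac{2}{n}$-power law $d_{n,0}(a^n-\theta a^n)^{2/n}$ combined with the profile of $W_a$. Tracking this explicitly in the radial competitors should produce the stated exponent.

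The upper bound is the dual argument. We compare $u$ with the obstacle-type extremals $v_a(\cdot,p)$ of \eqref{eq:defnva}, and we use that only the negative part $\mu^-$ can raise $u$. The obstacle extremal removes mass exactly on a coincidence set of the shape $E_A(0,a)$. Hence if the negative mass is not concentrated in $E_A(0,(1+\rho)a)$, which is measured by $\omega_na^n+\mu(E_A(0,(1+\rho)a))$, the same compactness-plus-uniqueness argument applies. Here uniqueness comes from the obstacle case of Theorem \ref{thm:abp extremal}, together with the Legendre-duality classification of $W_a^*$. The gain appears with the exponent $n$, reflecting the $n$-homogeneity of the coincidence-set volume.
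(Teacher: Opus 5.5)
\textbf{Verdict: there is a genuine gap.} The key step of your argument rests on a uniqueness statement that is not available here, and the quantitative step after it is missing.

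\textbf{The uniqueness input is circular.} Theorem \ref{thm:abp extremal} only identifies the values of $\inf_{w\in\D_{a,\varphi}}w(y)$ and $\sup_{w\in\D_{a,\varphi}}w(y)$, and states that the supremum is attained by \emph{some} $v_a(\cdot,p)$. It says nothing about the extremum being attained \emph{only} by $u_a(\cdot,y)$, or only by an obstacle solution whose coincidence set is the centred ellipsoid. Your blow-up limit actually needs the entire-space version of that uniqueness. Namely: an entire solution asymptotic to the quadratic, with defect mass at most $\omega_na^n$ and attaining the extremal value at $0$, must have $\mu=\omega_na^n\delta_0$ or $\mu=-\M\varphi\,\chi_E$ for the ellipsoid $E$. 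That is exactly part (iii) of Theorem \ref{thm:abp global rigidity}. In this paper it is deduced \emph{from} the estimate you are trying to prove: Step 3 of that proof applies it to $w_R$ and lets $R\to\infty$. The statement itself is not proved here; it is imported from \cite{jin2025abp}. Appealing to \cite{jin2016solutions} for $W_a$ and $W_a^*$ does not help either. That classification applies only once you already know the defect is a single Dirac mass or minus a characteristic function, which is the rigidity conclusion.

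\textbf{Compactness cannot give the stated rates.} Even granting uniqueness, a contradiction-and-compactness argument yields only a non-explicit modulus: for each $\theta_0>0$, some gain $\eta(\theta_0,\rho)>0$, valid for $a\le a_0(\theta_0)$. The theorem asserts more:
\begin{itemize}
\item the specific rates $c_\rho\theta^{n/2}$ and $c_\rho(\cdot)^n$ as $\theta\to0$, which is exactly the information compactness loses;
\item a single $\varepsilon_0$, independent of $\rho$, with the error $C_\rho a^\beta$ separated from the gain.
\end{itemize}
Absorbing the range $a>a_0(\theta)$ into $C_\rho a^\beta$ would already require a quantitative lower bound such as $a_0(\theta)\gtrsim\theta^{n/(2\beta)}$, which compactness does not give.

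\textbf{The closing heuristic does not fill the gap.} The law $d_{n,0}(a^n-\theta a^n)^{2/n}$ describes mass that is lost, not mass moved to distance $\rho a$. You also give no comparison principle that would reduce a general signed $\mu$ to radial competitors. The displaced-mass case is precisely the hard one. That mass sits at a distance comparable to the natural scale $a$, so a decay bound of the type \eqref{eq:dna2 uaap} costs about $C\theta\rho^{2-n}a^2$. For small $\rho$ this exceeds the gain $\approx\frac{2}{n}d_{n,0}\theta a^2$ obtained by removing that mass from $0$. Moreover, \eqref{eq:dna2 uaap} bounds the deviation of $u_a(\cdot,0)$ itself, not the effect at $0$ of arbitrarily placed mass.

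A proof has to produce the gain through a direct quantitative comparison, with explicit dependence on the displaced mass, rather than through a limiting uniqueness argument.
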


The constants $C$ in Theorem \ref{thm:ordera2 more} and $\varepsilon_0$ in Theorem \ref{thm:rigidity} depend only on $n$, $\alpha$, $\operatorname{diam}(\Omega)$, $\left\|\partial \Omega\right\|_{C^{2,\alpha}}$, $\left\|D^2 \varphi\right\|_{C^{\alpha}(\overline{\Omega})}$ and $\left\|(D^2 \varphi)^{-1}\right\|_{L^\infty(\Omega)}$. The constants $c_\rho$ and $C_\rho$ Theorem \ref{thm:rigidity} depend only on the same parameters, and additionally on $\rho$.

\section{A quantitative decay estimate}\label{sec:decay estimates}

We will use the following two lemmas in our earlier work \cite{jin2025abp}.
\begin{Lemma}[{\cite[Lemma 3.9]{jin2025abp}}]\label{lem:comparison gene} 
Let $u, \varphi \in C(\overline{\Omega})$ be convex functions, and let $W\subset \Om$ be a closed set such that
\[
\M u(E) \leq \M \varphi(E), \quad \text{ for every Borel set } E \text{ satisfying } W\subset E \subset \Om.
\] 
Then
\[
\max_{x \in W}\{u(x)-\varphi(x)\} \geq \min _{x \in \partial \Omega}\{u(x)-\varphi (x)\} .
\]	
\end{Lemma}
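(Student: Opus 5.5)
The plan is to argue by contradiction, using the standard comparison principle for Monge–Amp\`ere measures together with a small strictly convex perturbation that makes the inequality strict. We may assume $W\neq\emptyset$ and that the maximum over $W$ is attained (for instance $W$ compact, as implicit in the statement). Suppose the conclusion fails, so that
\[
\max_{W}(u-\varphi)<m:=\min_{\partial\Omega}(u-\varphi).
\]
Choose $m'$ strictly between these two numbers and put $\eta:=m'-\max_W(u-\varphi)>0$. Fix $R$ and $x_0$ with $\Omega\subset B_R(x_0)$. For $\delta>0$ define
\[
w_\delta:=\varphi+m'+\delta\bigl(|x-x_0|^2-R^2\bigr),\qquad G_\delta:=\{x\in\Omega:\ u(x)<w_\delta(x)\}.
\]

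We first record the geometry of $G_\delta$. Since $w_\delta\le \varphi+m'$ on $\overline\Omega$ and $u-\varphi-m'\ge m-m'>0$ on $\partial\Omega$, continuity gives $u>w_\delta$ in a neighbourhood of $\partial\Omega$. Hence $G_\delta$ is open, $\overline{G_\delta}\subset\Omega$, and $u=w_\delta$ on $\partial G_\delta$. On $W$ we have
\[
u-w_\delta\le -\eta+\delta R^2<0 \quad\text{once } \delta<\eta/R^2 .
\]
So $W\subset G_\delta$, and $G_\delta$ is a nonempty open set, hence $|G_\delta|>0$. In particular $E=G_\delta$ is admissible in the hypothesis, which gives $\M u(G_\delta)\le \M\varphi(G_\delta)$. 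Moreover $\M\varphi(G_\delta)<\infty$, because $G_\delta$ is compactly contained in $\Omega$ and so $\partial\varphi(G_\delta)$ is bounded.

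Next we apply the comparison principle on $G_\delta$. Since $u<w_\delta$ in $G_\delta$ and $u=w_\delta$ on $\partial G_\delta$, the usual sliding-of-supporting-hyperplanes argument gives $\partial w_\delta(G_\delta)\subset\partial u(G_\delta)$. Therefore $\M u(G_\delta)\ge \M w_\delta(G_\delta)$.

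Finally we use superadditivity of the Monge–Amp\`ere measure, $\M(f+g)\ge \M f+\M g$ for convex $f,g$. This can be checked by mollification and the inequality $\det(A+B)\ge\det A+\det B$ for nonnegative symmetric matrices, then passing to weak limits on open sets. Applied with $f=\varphi$ and $g=\delta|x-x_0|^2$, it gives
\[
\M w_\delta(G_\delta)\ge \M\varphi(G_\delta)+(2\delta)^n|G_\delta|>\M\varphi(G_\delta).
\]
Combining the three steps yields
\[
\M\varphi(G_\delta)\ge \M u(G_\delta)\ge \M w_\delta(G_\delta)>\M\varphi(G_\delta),
\]
which is a contradiction since $\M\varphi(G_\delta)$ is finite.

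The main obstacle I expect is the strictness in the last step. Without the quadratic perturbation, one only obtains $\M u(G)\ge\M\varphi(G)$, which does not contradict the hypothesis. The perturbation must therefore be small enough that $G_\delta$ still contains $W$ (this is why we need the gap $\eta$), yet it must genuinely increase the measure. That increase is exactly what the superadditivity inequality for $\M$ provides, and justifying it for merely continuous convex functions via approximation is the one technical point to verify carefully.
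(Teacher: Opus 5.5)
Your proof is correct. The strict perturbation $w_\delta=\varphi+m'+\delta(|x-x_0|^2-R^2)$ makes $G_\delta$ an open set with $W\subset G_\delta\Subset\Omega$, so it is admissible in the hypothesis. The inclusion $\partial w_\delta(G_\delta)\subset\partial u(G_\delta)$, together with $\M w_\delta\ge \M\varphi+(2\delta)^n\LL$ (obtained by mollification and weak convergence tested against nonnegative $C_c(\Omega)$ functions) and the finiteness of $\M\varphi(G_\delta)$, then gives the contradiction exactly as you describe.

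The paper does not prove this lemma; it only quotes it from the authors' earlier work. Your argument is the standard Alexandrov comparison-principle route one would expect there.

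Your two side assumptions are harmless:
\begin{itemize}
\item $W\neq\emptyset$ is needed for the statement to be true at all: with $W=\emptyset$ and $u=\varphi$ the conclusion fails.
\item A relatively closed $W$ that is not compact accumulates at some $z\in\partial\Omega$. Then $\sup_W(u-\varphi)\ge(u-\varphi)(z)\ge\min_{\partial\Omega}(u-\varphi)$ holds trivially by continuity.
\end{itemize}
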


\begin{Lemma}[A boundary contact lemma, {\cite[Lemma 6.1]{jin2025abp}}]\label{lem:extends to boundary}
Let $u, \varphi \in C(\overline{\Omega})$ be convex functions with $u = \varphi$ on $\partial \Omega$. Suppose that near $\partial \Omega$,  $\varphi$ is strictly convex and satisfies $0 < \lambda \leq \det D^2 \varphi \leq \Lambda < \infty$, and $\varphi$ is
not identically equal to $u$ there. If the measure $\mu = \M u - \M \varphi$ is compactly supported in $\Omega$ and satisfies $\mu(\Omega) = 0$, then both sets $\overline{\{u <\varphi\}}$ and $\overline{\{ \varphi<u\}}$ intersect the boundary $\partial\Omega$.
\end{Lemma}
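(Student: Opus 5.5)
We argue by contradiction, treating the two sets separately. Suppose first that $\overline{\{u<\varphi\}}\cap\partial\Omega=\emptyset$. Then there is a compact set $K'\subset\Omega$ containing both $\operatorname{supp}\mu$ and $\overline{\{u<\varphi\}}$. On the collar $S:=\Omega\setminus K'$ we therefore have $u\ge\varphi$ and $\M u=\M\varphi$, with $u=\varphi$ on $\partial\Omega$. The idea is that $u\ge\varphi$ near the boundary, with equal boundary values, forces $u$ to have a gradient image that is too small. This is compatible with $\mu(\Omega)=0$ only if $u\equiv\varphi$ near $\partial\Omega$, which is excluded by hypothesis.

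\textbf{Step 1 (mass comparison).} Take $w:=\max\{u,\varphi\}$. Then $w\ge\varphi$ in $\Omega$, $w=\varphi$ on $\partial\Omega$, and $w=u$ on $S$. The standard inclusion of subgradient images (if $\varphi\le w$ with equality on $\partial\Omega$, then $\partial w(\Omega)\subset\partial\varphi(\Omega)$ up to a null set) gives $\M w(\Omega)\le\M\varphi(\Omega)=\M u(\Omega)$, where the last equality uses $\mu(\Omega)=0$. On the other hand, $w$ and $u$ coincide on the collar $S$. So for any $p\in\partial u(\Omega)$, the supporting hyperplane of $u$ with slope $p$, lifted until it touches $w$, is a supporting plane of $w$. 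Such a plane touches $w$ at a point of $\overline\Omega$, and if it touches only on $\partial\Omega$ one perturbs. Hence $|\partial u(\Omega)|\le|\partial w(\Omega)|$, and all these quantities coincide. Alternatively I would apply Lemma~\ref{lem:comparison gene} with $W=K'$, to $w$ against $u$ and to $u$ against $w$, to get the two-sided information directly.

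\textbf{Step 2 (equality forces coincidence near the boundary).} Equality $|\partial w(\Omega)|=|\partial\varphi(\Omega)|$ with $w\ge\varphi$ and $w=\varphi$ on $\partial\Omega$ means every slope $p\in\partial\varphi(\Omega)$ is also realized by $w$. I would localize this to $S$. For slopes $p\in\partial\varphi(S)$, the plane $\ell_p$ supporting $\varphi$ at a point of $S$ lies below $w$, and it must support $w$ somewhere, up to a null set of slopes. Since $\varphi$ is strictly convex in $S$ with $\lambda\le\det D^2\varphi\le\Lambda$, the set $\partial\varphi(S)$ has positive measure, and these slopes are attained by $\varphi$ only in $S$. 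Comparing $w-\ell_p\ge\varphi-\ell_p$, the contact point of $w$ must lie in $\{w=\varphi\}$. Running this over almost all $p\in\partial\varphi(S)$, and using continuity together with the strict convexity of $\varphi$, yields $u=w=\varphi$ on a neighborhood of $\partial\Omega$. This contradicts the assumption that $\varphi$ is not identically equal to $u$ there.

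\textbf{The reverse set and the main obstacle.} For $\overline{\{\varphi<u\}}$ I would swap the roles of $u$ and $\varphi$. The hypotheses are not symmetric, but on the collar $\M u=\M\varphi$, so $u$ has density between $\lambda$ and $\Lambda$ there. Its strict convexity near $\partial\Omega$ should follow from Caffarelli's localization, using that $u$ agrees with the strictly convex $\varphi$ on $\partial\Omega$: a line segment in the graph of $u$ must extend to the boundary, where it would contradict strict convexity of $\varphi=u$. I expect Step~2 to be the main obstacle, namely turning equality of total masses into pointwise coincidence on the collar without any boundary regularity. To handle it, I would first try to shrink $\Omega$ to a slightly smaller sublevel-type domain inside the collar region, where both functions are strictly convex with comparable densities. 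There I would compare the supporting planes slope by slope as above.
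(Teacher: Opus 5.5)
\paragraph{Main gap: Step~2.}
Your Step~1 is correct in substance. The inclusion $\partial u(\Omega)\subset\partial w(\Omega)$ does hold, though not by ``perturbing''. Suppose the lifted plane $\ell_p+c$ touched $w$ only at some $y\in\partial\Omega$. Then convexity of $u-\ell_p$ on the segment from $y$ to the contact point of $\ell_p$ with $u$, together with $w=u$ on $S$, forces $c=0$ and $u=\ell_p$ on an initial piece of that segment lying in $S$.

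But Step~1 only turns the hypothesis $\mu(\Omega)=0$ into $|\partial w(\Omega)|=|\partial\varphi(\Omega)|$. The real content of the lemma is the converse: $w\ge\varphi$ with $w\not\equiv\varphi$ near $\partial\Omega$ should force a \emph{strict} deficit $|\partial w(\Omega)|<|\partial\varphi(\Omega)|$. Step~2 does not prove this.

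A slope $p\in\partial\varphi(S)$ realized by $w$ is realized by a lifted plane $\ell_p+c$ with $c\ge 0$, not by $\ell_p$ itself. The comparison $w-\ell_p\ge\varphi-\ell_p$ gives no reason for $c=0$, nor for the contact point to lie in $\{w=\varphi\}$. Your inference uses only $w\ge\varphi$ and $w=\varphi$ on $\partial\Omega$, and it already fails for $\varphi=\frac12|x|^2$, $w=\varphi+\epsilon(1-|x|^2)$ on $B_1$. There the slope $p=x_0$ with $|x_0|<1-2\epsilon$ is realized by $w$ at $z=x_0/(1-2\epsilon)$, where $w(z)>\varphi(z)$.

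Equality of total masses is a global constraint and cannot be turned into coincidence slope by slope. What is missing is a mechanism showing that $w>\varphi$ somewhere in the collar loses a set of slopes of \emph{positive measure}. This typically needs two ingredients:
\begin{enumerate}
\item a strong maximum principle in the collar, to get $u>\varphi$ on a full neighborhood of $\partial\Omega$;
\item a quantitative argument. One option is Hopf-type boundary behaviour. Another is to glue $\max\{\varphi,u-\epsilon\}$ near $\partial\Omega$ to $u-\epsilon$ inside. This function equals $\varphi$ near $\partial\Omega$, so by your Step~1 reasoning its total mass is $\M\varphi(\Omega)=\M u(\Omega)$. Hence it carries no mass on the crease $\{u-\varphi=\epsilon\}$ for a.e.\ $\epsilon$, and one must then show this forces $\nabla u=\nabla\varphi$ there.
\end{enumerate}
Your suggested remedy, shrinking to a sublevel domain inside the collar, does not help. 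On the new boundary $u\neq\varphi$, so the gradient-image inclusions you rely on no longer apply.

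\paragraph{Second gap: the case $\overline{\{\varphi<u\}}$.}
Reducing this case by swapping the roles of $u$ and $\varphi$ is also unjustified. Caffarelli's localization only says that the set where $u$ coincides with a supporting plane has no extreme points where $\lambda\le\det D^2u\le\Lambda$. In your setting those extreme points may lie in $K'$, where $\mu$ is supported, so the segment need not reach $\partial\Omega$ at both ends.

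A segment joining a point of $K'$ to a point $y\in\partial\Omega$ is not ruled out by strict convexity of $\varphi$, because $u$ and $\varphi$ agree only at $y$ along it. So strict convexity of $u$ near $\partial\Omega$ is not available from your argument. This case needs its own treatment, for instance via $\max\{u,\varphi\}$, which equals $\varphi$ on the collar. It again requires the strict-deficit step above, now in a form that uses regularity of $\varphi$ only.
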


The following is a comparison principle for the obstacle problem. 
\begin{Lemma}\label{lem:comparison principle equal volume}
Let $\Omega\subset \mathbb{R}^{n}$ be a bounded convex domain. Let
$w\in C(\overline{\Omega})$ and $\tilde w\in C(\overline{\Omega})$ be solutions of the obstacle problems
\[
\det D^2 v = \chi_{\left\{ v>  t\right\}}   \quad\text{in }   \Omega, \quad v\geq t,
\]
where the obstacle $t$ is taken to be the constants $h $ and $\tilde h $, respectively.
Set $K=\{w =h\}$ and $\widetilde{K}=\{ \tilde{w}=\tilde{h}\}$.
Assume that $w\ge \tilde w$ on $\partial\Omega$. If the coincidence sets satisfy $K\neq\emptyset$ and
\[
|K|\geq|\widetilde K|,
\]
then we have
\[
h \geq \tilde{h} \quad \text{and} \quad  w \geq \tilde{w}.
\]
\end{Lemma}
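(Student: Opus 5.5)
The proof is a comparison argument carried out in two steps. First we show $h\ge\tilde h$ from the volume condition, and then we derive $w\ge\tilde w$ from it by applying Lemma \ref{lem:comparison gene} on the set where $\tilde w$ exceeds $w$. The measure-theoretic facts used throughout are these. Since $\M w=\chi_{\{w>h\}}\LL$ and $\M w\ge 0$, and the coincidence set $K$ is convex, we have $\M w(E)=|E\setminus K|$ for Borel $E\subset\Omega$, and similarly $\M\tilde w(E)=|E\setminus\widetilde K|$. Also, $w\ge h$ and $\tilde w\ge \tilde h$ in $\Omega$.

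\emph{Step 1: $h\ge\tilde h$.} Suppose instead that $h<\tilde h$. Then $\tilde w\ge\tilde h>h$ everywhere, so $\tilde w>w$ on $K\neq\emptyset$, and $w\ge\tilde w$ on $\partial\Omega$. Set $U=\{\tilde w>w\}$, an open set with $\overline U\subset\overline\Omega$ and $w=\tilde w$ on $\partial U\cap\Omega$. On the boundary of $U$ we have $\tilde w\le w$ (on $\partial\Omega$ by hypothesis, in the interior by continuity), and $\tilde w>w$ inside. The Alexandrov comparison principle applied on $U$ then forces $\M\tilde w(U)\le\M w(U)$, with strictness unless $\tilde w-w$ is constant. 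Indeed, the subgradient image $\partial w(U)\subset\partial\tilde w(U)$ up to a null set, since sliding supporting planes of $w$ down until they touch $\tilde w$ produces contact points in $U$.

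Now compute both sides using $K\subset U$:
\[
\M w(U)=|U|-|K|,\qquad \M\tilde w(U)=|U\setminus\widetilde K|\ge |U|-|\widetilde K|.
\]
Hence $|U|-|\widetilde K|\le |U|-|K|$, that is, $|K|\le|\widetilde K|$. Combined with the hypothesis $|K|\ge|\widetilde K|$, this gives equality, so the inequalities are tight: $\widetilde K\subset U$ up to a null set, and $\M\tilde w(U)=\M w(U)$. For the contradiction I would use Lemma \ref{lem:comparison gene} with $u=w$, $\varphi=\tilde w$ on the domain $U$ (or on a convex domain containing it), taking $W=K$. For every Borel $E$ with $K\subset E\subset U$ we have $\M w(E)=|E\setminus K|$ and $\M\tilde w(E)=|E\setminus\widetilde K|\ge |E|-|\widetilde K|=|E|-|K|$, so $\M w(E)\le\M\tilde w(E)$. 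The lemma then gives $\max_K(w-\tilde w)\ge\min_{\partial U}(w-\tilde w)\ge 0$. This contradicts $w=h<\tilde h\le\tilde w$ on $K$.

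\emph{Step 2: $w\ge\tilde w$.} Now $h\ge\tilde h$. Suppose $U=\{\tilde w>w\}\ne\emptyset$. If $U$ meets $K$, the same Lemma \ref{lem:comparison gene} argument with $W=K\cap\overline U$ applies, and the measure inequality follows from $|K|\ge|\widetilde K|$ exactly as in Step 1. The result is $\max_W(w-\tilde w)\ge 0$. But on $W\cap U$ we have $w-\tilde w<0$, so the maximum can only be attained on $\partial U$, and this must be handled via strictness. If $U$ misses $K$, then $\M w=\LL\ge\M\tilde w$ on $U$ while $w=\tilde w$ on $\partial U$ (relative to $\Omega$) and $w\ge\tilde w$ on $\partial\Omega$. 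The standard comparison principle then gives $w\ge\tilde w$ in $U$, a contradiction.

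\emph{Main obstacle.} The delicate point is the strictness needed in the case where $U$ meets $K$ but the Lemma only yields the non-strict conclusion $\max\ge 0$ attained on $\partial U$. I would handle it by perturbation. Replace $\tilde w$ by $\tilde w-\varepsilon$, which solves the obstacle problem with obstacle $\tilde h-\varepsilon$ and has the same coincidence set, and apply the argument to $U_\varepsilon=\{\tilde w-\varepsilon>w\}$. Now $\overline{U_\varepsilon}$ stays away from $\partial\Omega$, and on $\partial U_\varepsilon$ we have $w-(\tilde w-\varepsilon)=0$ while it is $<0$ inside. Lemma \ref{lem:comparison gene} with $W=K\cap\overline{U_\varepsilon}$, applied on a small convex neighbourhood or on $\Omega$ with $W=K\cup(\Omega\setminus U_\varepsilon)$, gives the contradiction, and letting $\varepsilon\to 0$ finishes the proof. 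Choosing $W$ correctly so that both the measure hypothesis of Lemma \ref{lem:comparison gene} and the boundary sign hold is the step I expect to require the most care.
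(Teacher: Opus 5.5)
Your Step~1 is correct, and it takes a different route from the paper. The paper first gets $\tilde w-\tilde h\le w-h$ from the obstacle-problem comparison (Lemma~2.3 of \cite{jin2025regularity}). From this it gets $K\subset\widetilde K$, then $\M w=\M\tilde w$ via the volume hypothesis, and finally applies ordinary comparison. You instead feed the volume hypothesis directly into Lemma~\ref{lem:comparison gene}. Your argument can be shortened: apply Lemma~\ref{lem:comparison gene} on $\Omega$ itself with $W=K$. Since $\M w(E)=|E|-|K|\le|E|-|\widetilde K|\le\M\tilde w(E)$ for every $E\supset K$, it gives a point $x\in K$ with $h=w(x)\ge\tilde w(x)\ge\tilde h$ directly. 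Also, the inclusion you cite is reversed. Sliding supporting planes of $\tilde w$ down onto the lower function $w$ gives $\partial\tilde w(U)\subset\partial w(U)$, and that is what yields your (correctly stated) inequality $\M\tilde w(U)\le\M w(U)$.

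The genuine gap is in Step~2.

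\begin{itemize}
\item In the case where $U=\{\tilde w>w\}$ misses $K$, you combine $\M w=\LL\ge\M\tilde w$ on $U$ with $w\ge\tilde w$ on $\partial U$ to conclude $w\ge\tilde w$ in $U$. This is the comparison principle backwards. The function with the larger Monge--Amp\`ere measure is the smaller one, so $\M w\ge\M\tilde w$ on $U$ is entirely consistent with $w<\tilde w$ there, and no contradiction arises.
\item In the case where $U$ meets $K$, the claim that the hypothesis of Lemma~\ref{lem:comparison gene} holds ``exactly as in Step~1'' is not justified. For $K\cap U\subset E\subset U$ you only have $\M w(E)=|E|-|K\cap U|$ and $\M\tilde w(E)\ge|E|-|\widetilde K\cap U|$. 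The needed inequality $|K\cap U|\ge|\widetilde K\cap U|$ does not follow from $|K|\ge|\widetilde K|$ unless $K\subset U$.
\item Your $\varepsilon$-perturbation leaves open exactly the point at issue: a choice of $W$ for which the measure hypothesis holds and the conclusion is contradictory.
\end{itemize}

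The missing observation removes both cases and every strictness issue: once $h\ge\tilde h$, the set $U$ is disjoint from $\widetilde K$, because on $U$ one has $\tilde w>w\ge h\ge\tilde h$. Hence $\M\tilde w=\LL\ge\M w$ on $U$, while $\tilde w\le w$ on $\partial U$. The comparison principle, now in the correct direction, gives $\tilde w\le w$ in $U$, contradicting $U\neq\emptyset$. This is the paper's argument, phrased there as follows: $w\ge h\ge\tilde h=\tilde w$ on $\widetilde K$, and one compares on $\Omega\setminus\widetilde K$, where $\M\tilde w=\LL\ge\M w$. The volume hypothesis plays no role in this step.
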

\begin{proof}
Suppose, for contradiction, that $\tilde h>h$. Then Lemma~2.3 in~\cite{jin2025regularity} yields
\[
\tilde w-\tilde h\le w-h\quad \text{in }\Omega.
\]
In particular, $K\subset \widetilde K$, and hence $\M w\ge \M \tilde w$.
Since $|K|\geq |\widetilde K|$, we in fact have $\M w=\M \tilde w$.
By the comparison principle, this implies $w\ge \tilde w\ge \tilde h$ in $\Omega$. Since $w=h$ on the nonempty set $K$, we obtain $h\ge \tilde h$, contradicting $\tilde h>h$.
Therefore, $h\ge \tilde h$.

Finally, note that on $\widetilde K=\{\tilde w=\tilde h\}$ we have
\[
w\ge h\ge \tilde h=\tilde w.
\]
Applying the comparison principle in $\Omega\setminus \widetilde K$ then yields $w\ge \tilde w$ in $\Omega$.
\end{proof}

We recall that for a $C^1$ convex function $\varphi$ on a convex domain $\Omega$, the section of $\varphi$ at $x_0 \in \Omega$ with height $t > 0$ is the sub-level set
\[
S_{t}^{\varphi}(x_0)= S_{t,\nabla \varphi(x_0)}^{\varphi}(x_0) = \left\{ x  \in \overline{\Omega}  :\;  \varphi (x) < \varphi (x_0)+\nabla\varphi(x_0) \cdot (x-x_0)+t  \right\}.
\]
Moreover, if $\det D^2 \varphi = 1$ and $S_t^\varphi(x_0) \subset \Omega$, then it follows from Caffarelli's regularity theory \cite{caffarelli1990ilocalization,caffarelli1990interiorw2p,caffarelli1991regularity} that there exists an affine transformation $A \in \mathcal{A}_n$ such that
\[
 A B_{t^{1/2}/C_0} \subset  (S_{t}^{\varphi}(x_0) -x_0) \subset A B_{C_0t^{1/2}}
\]
for some positive constant $C_0$ depending only on $n$.

The following localized (compared to Theorem \ref{thm:ordera2 more}) Alexandrov estimate plays a key role in our analysis. For two non-negative quantities $ b_1$ and $b_2 $, we adopt the notation 
\[
b_1 \approx b_2 
\] 
to indicate that $ b_1 \leq Cb_2 $ and $b_2 \leq Cb_1 $  for some constant $C$ depending only on $n$. 

\begin{Theorem}\label{prop:decay estimate local}
Let $n \ge 3$ and let $\Omega \subset B_{4C_0^2} \subset \R^n$ be a convex domain. 
Let $\varphi \in C(\overline{\Omega})$ be a convex function  satisfying
\[
\det D^2 \varphi = 1 \quad \text{in } \Omega, \quad B_{1}  \subset  \{ \varphi <0\} \subset \Omega,\quad \inf_{\Omega} \varphi = \varphi(0)  .
\] 
Let $u \in C(\overline{\Omega})$ be a convex function satisfying $u = \varphi$ on $\partial \Omega$, and set $\mu = \M u - \M \varphi$. Fix $\alpha=\frac{1}{2}$ and $\beta = \frac{(n-2)\alpha}{n+\alpha}$.
Then for any $0 < r \le 1/64$ and point $x_0 \in B_{1/64}$, we have
\begin{equation}\label{eq:asymptotic-quadratic-order local}
\left|u(x_0) -\varphi(x_0)\right| \leq d_{n,0}b^2 + C(n)  \left(\frac{a^n}{r^{n-2}}+\frac{b^{2+\beta}}{r^{\beta}}\right),
\end{equation} 
where the constants $a \ge b \ge 0$ are given by 
\[
|\mu|(\Omega) = \omega_n a^n, \quad |\mu|\bigl(B_{r}(x_0)\bigr)+|\mu|\bigl(\Omega\setminus B_{1/32}\bigr) = \omega_n b^n.
\] 

In the special case where $\varphi$ is quadratic, the estimate \eqref{eq:asymptotic-quadratic-order local} can be strengthened to
\begin{equation}\label{eq:asymptotic-quadratic-order local special}
\left|u(x_0) -\varphi(x_0)\right| \leq d_{n,0}b^2 + C(n)  \frac{a^n}{r^{n-2}}.
\end{equation} 
\end{Theorem}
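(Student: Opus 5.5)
We split $\mu$ into a near part $\mu_1:=\mu\llcorner\bigl(B_r(x_0)\cup(\Omega\setminus B_{1/32})\bigr)$, of total variation $\omega_n b^n$, and a middle part $\mu_2:=\mu-\mu_1$, supported in $B_{1/32}\setminus B_r(x_0)$ with $|\mu_2|(\Omega)\le\omega_n a^n$. The middle part sits at distance $\ge r$ from $x_0$, so it should contribute only a Green-function tail of order $a^n/r^{n-2}$. The near part is handled by the sharp pointwise Alexandrov bound of Theorem \ref{thm:ordera2 more}; there $\lambda_0=1$ because $\det D^2\varphi=1$.

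\textbf{Step 1 (normalization).} Caffarelli's interior theory applies since $\det D^2\varphi=1$, $B_1\subset\{\varphi<0\}\subset\Omega\subset B_{4C_0^2}$ and $\varphi$ attains its minimum at $0$. It gives $\varphi\in C^{2,1/2}$ on a fixed smaller section containing $B_{1/16}$, with $\|D^2\varphi\|_{C^{1/2}}$ and $\|(D^2\varphi)^{-1}\|_{L^\infty}$ bounded by $C(n)$. Hence the constants in \eqref{eq:dna2 uaap}, \eqref{eq:dna2 obs ap x} and Theorem \ref{thm:ordera2 more} depend only on $n$ if we work on a domain $\Omega'$ with $B_{1/16}\subset\Omega'\subset B_{1/8}$ and smooth boundary; the natural choice is a section of $\varphi$.

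\textbf{Step 2 (bracketing via extremal configurations).} We compare $u$ with an intermediate convex function $\psi$ on $\Omega$, defined by
\[
\M\psi=\M\varphi+\mu_2 \ \text{in }\Omega,\qquad \psi=\varphi \ \text{on }\partial\Omega.
\]
If $\M\varphi+\mu_2$ is not nonnegative, we instead use separately $\mu_2^+$ and $-\mu_2^-$; this gives two one-sided comparison functions.

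For $\psi-\varphi$ at $x_0$ we use Theorem \ref{thm:abp extremal}. It reduces the bound to extremal configurations, in which the mass of $\mu_2$ is concentrated at a single point $y\in B_{1/32}\setminus B_r(x_0)$ (isolated singularity), or into a coincidence set that does not contain $x_0$. By \eqref{eq:dna2 uaap}–\eqref{eq:dna2 obs ap x}, recentered at $y$, we get $|\psi(x_0)-\varphi(x_0)|\le C a^n/|x_0-y|^{n-2}\le Ca^n/r^{n-2}$. This holds provided the coincidence set stays in $B_{r/2}(y)$, which needs $a\lesssim r$. If $a\gtrsim r$, the crude bound $\|u-\varphi\|\le Ca^2\le Ca^n/r^{n-2}$ from Theorem \ref{thm:ordera2} already gives the claim.

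Next we compare $u$ with $\psi$. On $\Omega$ we have $\M u-\M\psi=\mu_1$, and $\psi$ is an $O(a^n/r^{n-2})$ perturbation of $\varphi$ near $x_0$. Restricted to $B_{1/32}$, $\mu_1$ is concentrated in $B_r(x_0)$, and its far part lies outside $B_{1/32}$. Theorem \ref{thm:ordera2 more}, applied on $\Omega'$ with background $\psi$ (or with $\varphi$, transferring the error), gives
\[
|u(x_0)-\psi(x_0)|\le d_{n,0}b^2+Cb^{2+\beta}.
\]
The boundary mismatch $u-\psi$ on $\partial\Omega'$ is itself controlled by the $b$-mass, since on that annulus only $\mu_1$ matters. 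The scale factor $r^{-\beta}$ in \eqref{eq:asymptotic-quadratic-order local} arises by applying Theorem \ref{thm:ordera2 more} on a section of size $\sim r$ around $x_0$, rescaled to unit size: in the rescaled picture the mass radius is $b/r$ and the estimate $d_{n,0}(b/r)^2+C(b/r)^{2+\beta}$ scales back to $d_{n,0}b^2+Cb^{2+\beta}r^{-\beta}$. The mass of $\mu_1$ outside $B_{1/32}$ and of $\mu_2$ then enters only through the boundary data on that small section. Those data are controlled by the Green-tail bound in this step.

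\textbf{Step 3 (quadratic case).} If $\varphi$ is quadratic, the extremal functions $u_a,v_a$ for balls or ellipsoids are explicit radial profiles, rescalings of $W_a$ and $W_a^*$. Their deviation is exactly $d_{n,0}b^2$ minus a correction of order $b^n/r^{n-2}$, so the $b^{2+\beta}$ error coming from the $C^{2,\alpha}$ approximation disappears. This gives \eqref{eq:asymptotic-quadratic-order local special}.

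\textbf{Main obstacle.} The hard part is Step 2's superposition: the Monge–Ampère operator is nonlinear, so the effects of $\mu_1$ and $\mu_2$ do not simply add. We must show that the boundary data on the small section around $x_0$ is perturbed by at most $Ca^n/r^{n-2}$ uniformly, and that Theorem \ref{thm:ordera2 more} is stable under such perturbations of the background. For this we would use Lemma \ref{lem:comparison gene} and Lemma \ref{lem:comparison principle equal volume} to order the extremal solutions, and absorb the perturbation of the boundary data by adding constants.
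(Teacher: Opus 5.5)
The decomposition is reasonable: your $\psi$ plays roughly the role of the paper's $\hat\varphi$ (or $\tilde\varphi$). But Step 2, which is the heart of the theorem, does not go through as written. You invoke Theorem \ref{thm:abp extremal} to reduce $\psi(x_0)-\varphi(x_0)$ to configurations with the mass at a point $y$ with $|y-x_0|\ge r$, or in a coincidence set avoiding $x_0$. The theorem says the opposite. The class $\D_{a,\varphi}$ records only the total variation of $\M w-\M\varphi$, not its support. The extremizers for evaluation at $x_0$ are $u_a(\cdot,x_0)$, with the Dirac mass \emph{at} $x_0$, and $v_a(\cdot,p)$, with $x_0$ \emph{in} its coincidence set. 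So it gives only $d_{n,0}a^2+Ca^{2+\beta}$, and the tail bound $Ca^n/r^{n-2}$ for mass supported away from $x_0$ is never established. Nonlinearity also rules out superposing \eqref{eq:dna2 uaap} over Dirac masses.

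Your bound for $u-\psi$ has a parallel problem. Theorem \ref{thm:ordera2 more} needs a $C^{2,\alpha}$ uniformly convex background, and $\psi$ is not one. If you instead localize to a section of size $r$ about $x_0$ with background $\varphi$, the boundary data there need not be $O(a^n/r^{n-2})$. The part of $\mu$ in $B_r(x_0)$, or mass of $\mu_2$ just outside $B_r(x_0)$, can lie next to that boundary and move $u$ there by an amount of order $b^2$ or $a^2$. That spoils the constant $d_{n,0}$.

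The missing idea is to apply Theorem \ref{thm:abp extremal} \emph{first}, in the class $\D_{b,\hat\varphi}$. Here $\hat\varphi=\varphi$ on $\partial\Omega$ and carries exactly the defect of $u$ outside $B_r(x_0)$. Mass outside $B_{1/32}$ is absorbed into $b$ the same way, via the auxiliary $\tilde\varphi$ carrying $\mu$ restricted to $B_{1/32}$. This moves all near mass either into $\omega_n b^n\delta_{x_0}$, giving $\hat u$, or into an obstacle solution $\hat v$ whose coincidence set contains $x_0$ and lies in $B_{Ca}(x_0)\subset B_{r/16}(x_0)$ when $a\ll r$. Now the annulus $B_{7r/8}(x_0)\setminus B_{r/8}(x_0)$ carries no defect, and the paper proceeds as follows:
\begin{itemize}
\item Savin's perturbation regularity and the Harnack inequality for the linearized operator give $\varphi-\hat u\approx\kappa:=\|\hat u-\varphi\|_{L^\infty(\partial B_{r/2}(x_0))}$ on the annulus.
\item Lemma \ref{lem:comparison gene}, applied against the solution $w$ whose extra mass $\omega_n a^n$ is spread smoothly over that annulus, gives $\hat u\ge w$ somewhere there.
\item Lemma \ref{lem:extends to boundary} plus the strong maximum principle, comparing $w$ with $u_a(\cdot,x_0)$ and using \eqref{eq:dna2 uaap}, give $\varphi-w\le Ca^n/r^{n-2}$. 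Hence $\kappa\le Ca^n/r^{n-2}$, and analogously for $\hat v$ with \eqref{eq:dna2 obs ap x}.
\item Only then is Theorem \ref{thm:ordera2 more} applied, rescaled on $B_{r/2}(x_0)$, where the background is the smooth $\varphi$ and the defect is a single Dirac mass or obstacle at $x_0$.
\end{itemize}
In the quadratic case the last step is replaced by direct comparison with $W_b$ and, through Lemma \ref{lem:comparison principle equal volume}, with $W_b^*$. Your ``main obstacle'' paragraph correctly locates the difficulty, but ordering extremal solutions and adding constants does not replace these steps.
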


\begin{proof}
\textbf{Step 1. Sign reduction.} 

Let us express
\[ 
\mu=\mu_+-\mu_-,
\]
where $\mu_+$ and $\mu_-$ are non-negative measures and $\mu_+ \perp \mu_-$. 
Let $u_+$ and $u_-$ be the convex solutions to 
\begin{align*}
& \M u_{+}=\M \varphi+\mu_+  \quad \text{in } {\Omega}, \quad u_{+} =u\quad \text{on } \partial \Omega, \\
& \M   u_{-} =\M \varphi-\mu_- \quad \text{in } {\Omega}, \quad   u_{-}  =u\quad \text{on } \partial {\Omega},  
\end{align*} 
respectively. The comparison principle yields
\begin{equation*} \label{eq:lem-order}
u_{+} \leq u  \leq u_{-} \quad \mbox{and} \quad u_{+} \leq \varphi  \leq u_{-}.
\end{equation*}
Consequently,
\[
u_+ - \varphi\le u-\varphi\le u_- -\varphi.
\]
From estimates of $u_+ - \varphi$ and $u_- - \varphi$, we obtain bounds for $|u - \varphi|$.

Therefore, by this reduction, we may assume, without loss of generality, in the rest proof that $\mu$ is either nonnegative or nonpositive, which implies $u \leq \varphi$ or $u \geq \varphi$, respectively. 

\medskip

\textbf{Step 2. Proof of \eqref{eq:asymptotic-quadratic-order local} under the assumption $|\mu|\bigl(\Omega\setminus B_{1/32}\bigr) =0$.} 

In this step, we are going to prove \eqref{eq:asymptotic-quadratic-order local} under the additional assumption $|\mu|\bigl(\Omega\setminus B_{1/32}\bigr) =0$. This assumption will be removed in Step 3. Recall that we will assume that $\mu$ is either non‑negative or non‑positive.

\medskip

\textbf{Step 2.1. Regularity and smallness reduction.} 

Set $h_0 = -\varphi(0)$. Then by the comparison principle, $\frac{1}{2} \le h_0 \le C(n)$. Define the section of $\varphi$ at $0$ by $$S_{h}=\{ x \in \overline{\Omega}: \varphi (x) < -h_0+h\}.$$ Then it follows from \cite{caffarelli1990ilocalization,gutierrez2016monge},  that  $\varphi \in \C_+^{2,\alpha}(\overline{S_{h_0/2}})$, $\partial S_{h_0/2} \in C^{2,\alpha}$ and $$\|D^2\varphi \|_{C^{\alpha}(\overline{S_{h_0/2}})} +\|(D^2\varphi)^{-1}\|_{L^{\infty}({S_{h_0/2}})} + \|\partial S_{h_0/2}\|_{C^{2,\alpha}}\le C(n)$$ for some constant $C(n)>0$ depending only on the dimension.

Consider the solution of
\[
\M \tilde{u}=\M u=\M \varphi+\mu   \quad \text{in } {S_{h_0/2}}, \quad \tilde u =\varphi =-\frac{h_0}{2}\quad \text{on } \partial {S_{h_0/2}}.
\]
Note that  $B_{1/2}\subset  \{\varphi<0\}= S_{h_0}$, we have 
$$
\operatorname{supp}\mu \subset B_{1/32}\subset \frac{1}{16} S_{h_0} \subset S_{h_0/16}.
$$ 
Hence, an application of \cite[Lemma 4.1]{jin2025abp} to the functions $u(C_0^2x)/h_0$ and $\varphi(C_0^2x)/h_0$ yields
\[
|u-\varphi| \leq C(n)|\mu|(\Omega) \leq C(n)a^n \quad \text{in } \Omega\setminus S_{h_0/2}.
\]
Thus, the comparison principle yields
\[
\|u-\tilde{u}\|_{L^{\infty}(S_{h_0/2})} \leq \|u-\tilde{u}\|_{L^{\infty}(\partial S_{h_0/2})} = \|u-\varphi\|_{L^{\infty}(\partial S_{h_0/2})} \leq C(n)a^n.
\]
And it suffices to establish that 
\[
|\tilde{u}(x_0)-\varphi(x_0)|  \leq d_{n,0}b^2 + C(n)  \left(\frac{a^n}{r^{n-2}}+\frac{b^{2+\beta}}{r^\beta}\right).
\]
We perform the scaling
\[
\tilde{u}(x) \;\mapsto\; 2\tilde{u}(2^{-\frac{1}{2}}x),
\qquad
\varphi(x) \;\mapsto\; 2\varphi(2^{-\frac{1}{2}}x),
\qquad
S_{h_0/2} \;\mapsto\; 2 S_{h_0/2},
\]
and relabel the transformed objects as $u$, $\varphi$, and $\Omega$. Under this normalization, it suffices to prove  \eqref{eq:asymptotic-quadratic-order local} assuming $\partial \Omega \in C^{2,\alpha}$, $\varphi \in \C_+^{2,\alpha}(\overline{\Omega})$, 
$$
\left\|\partial \Omega\right\|_{C^{2,\alpha}}+\left\|D^2\varphi\right\|_{C^{\alpha}(\overline{\Omega})}+\left\|(D^2\varphi)^{-1}\right\|_{L^{\infty}({\Omega})}\le C(n),
$$ 
and $B_{1} \subset \Omega =\{\varphi<0\}\subset B_{8C_0^2}$.
Now we can apply Theorem~\ref{thm:ordera2 more} to obtain
\[
\|u-\varphi\|_{L^{\infty}(\Omega)} \le d_{n,0} a^2+ Ca^{2+\beta}.
\]
Hence, the estimate \eqref{eq:asymptotic-quadratic-order local} is nontrivial only when the ratio $a/r$ is small. Since $r\leq 1/64$, it follows that $a$ is also small. Therefore, we will further assume, without loss of generality, $a/r$ and $a$ are small.

\medskip

\textbf{Step 2.2. Proof of \eqref{eq:asymptotic-quadratic-order local} for non-negative $\mu$ assuming $|\mu|\bigl(\Omega\setminus B_{1/32}\bigr) =0$.} 

We consider the case $\mu$ is non-negative, which implies $u \leq \varphi$. By subtracting a linear function and translation, we only need to prove for $x_0 = 0$.

Let $\hat{\varphi} \leq \varphi$ be the solution of
\[
\M \hat{\varphi} =\M {u} \cdot \chi_{\Omega \setminus B_{r}(0)}+\M \varphi \cdot \chi_{B_{r}(0)} \quad \text{in }   \Omega, \quad \hat{\varphi}=\varphi \quad \text{on } \partial \Omega.
\]
Let $\hat{u} \leq \hat{\varphi}$ be the solution of
\begin{equation*}
\M \hat{u} =\M {u} \cdot \chi_{\Omega \setminus B_{r}(0)}+\M \varphi \cdot \chi_{B_{r}(0)}+\omega_nb^n\delta_0 \quad \text{in }   \Omega, \quad \hat{u} =\varphi \quad \text{on } \partial \Omega.
\end{equation*}
Then $u,\hat{u}\in \mathcal{D}_{b,\hat{\varphi}}$. Applying Theorem~\ref{thm:abp extremal} to the class $\mathcal{D}_{b,\hat{\varphi}}$, we obtain
\[
u(0) \geq \hat{u}(0).
\]
Applying Theorem~\ref{thm:ordera2 more} to $\frac{\hat u(rx)}{r^2}$ and $\frac{\varphi(rx)}{r^2}$, together with the comparison principle, implies
\begin{equation}\label{eq:theorem 2.3 lower general}
\left|\hat{u}(0) - \varphi(0)\right| \leq \| \hat{u} - \varphi \|_{L^{\infty}(\partial B_{r/2}(0))} + d_{n,0} b^2 + C(n) \frac{b^{2+\beta}}{r^{\beta}}.
\end{equation}
Thus, it suffices to show that 
\begin{equation}\label{eq:theorem 2.3 lower general 1}
\kappa:=\| \hat{u} - \varphi \|_{L^{\infty}(\partial B_{r/2}(0))}  \leq C(n) \frac{a^n}{r^{n-2}} .
\end{equation}

Applying  Theorem~\ref{thm:ordera2 more}, we find that 
\[
\|\hat{u}-\varphi\|_{L^{\infty}(\Omega)} \le d_{n,0} a^2+ Ca^{2+\beta}.
\]
Since $r \gg a$ and $\varphi \in \C_+^{2,\alpha}$, it follows from \cite{savin2007perturbation} (see also \cite{jian2007continuity}) that $\hat{u} \in C^{1,1}(B_{7r/8}(0) \setminus B_{r/8}(0))$, and that the linearized operator
\begin{equation*}\label{eq:linearized mae}
L[w] := A^{ij} \partial_{ij} w,
\end{equation*}
where the coefficient matrix $\{A^{ij}\} = \int_0^1 \operatorname{Cof}\left(t D^2 u + (1-t) D^2 \varphi\right) dt$ is defined via the cofactor matrix operator, is uniformly elliptic in $B_{7r/8}(0) \setminus B_{r/8}(0)$. Note that 
\[
L[u - \varphi] = \det D^2 u - \det D^2 \varphi = 0 \quad \text{in } B_{7r/8}(0) \setminus B_{r/8}(0).
\] 
The Harnack inequality then implies  
\begin{equation}\label{eq:harnackv}
\varphi-\hat{u}  \approx \kappa  \quad \text{in } B_{3r/4}(0) \setminus B_{r/4}(0).
\end{equation}

Let $\eta \in C^{\infty}([0,\infty))$ be a non-increasing function satisfying $\eta = 1$ on $[0,1/16]$ and $\eta = 0$ on $[1/4,\infty)$. 
Let $w \leq \varphi$ be the solution to
\begin{equation*}\label{eq:approximation 3.1p}
\M w = \M \varphi+\varepsilon\eta \left(\left|\frac{|x|}{r}-\frac{1}{2}\right|\right)\quad \text{in }   \Omega, 
\quad w =\varphi \quad \text{on } \partial \Omega,
\end{equation*}
where $\varepsilon > 0$ is chosen such that $\M  w(\Omega) = \M \varphi(\Omega) + \omega_n a^n$, which implies $\varepsilon \approx  a^n / r^{n}$ and $\M \hat{u}(E) \leq \M w(E)$, for every Borel set  $E$ satisfying $(B_{3r/4}(0) \setminus B_{r/4}(0))\subset E \subset \Om.$
By the comparison principle (Lemma~\ref{lem:comparison gene}), it follows that $\hat{u} \ge w$, and thus, $\varphi-\hat{u}\le \varphi-w$, at some point in the annulus $B_{3r/4}(0) \setminus B_{r/4}(0)$. Consequently, because of \eqref{eq:harnackv}, it suffices to prove that
\[
\|\varphi-w\|_{L^{\infty}(B_{3r/4}(0)\setminus B_{r/4}(0))}
\leq C(n) \frac{a^n}{r^{n-2}} .
\]

Let $u_a := u_a(\cdot,0)$ be the solution to \eqref{eq:defnuay} at $y=0$. Applying the comparison principle, we find that every connected component of $\{u_a < w\}$ must contain $0$; hence, $\{u_a < w\} $ is connected. We claim that there exists $z \in \partial B_{2r}(0)$ such that $u_a(z) \leq w(z)$. 
Suppose, for contradiction, that $\partial B_{2r}(0) \subset \{u_a > w\}$. Hence, $\{u_a <w\}\subset B_{2r}(0)$, that is, $\{u_a \ge w\}\subset \Omega\setminus B_{2r}(0)$.
Noting that $ \M w (\Omega) = \M u_a(\Omega) = \M \varphi(\Omega) + \omega_na^n$, Lemma~\ref{lem:extends to boundary} then implies that $u_a=w$ near $\partial \Omega$.
By the strong maximum principle (see, e.g., \cite{jian2025strong}), we further deduce that $u_a=w$ in $\Omega\setminus B_{2r}(0)$, and consequently on  $\partial B_{2r}(0)$. This yields a contradiction.

In conclusion,  we obtain from \eqref{eq:dna2 uaap} that
\[
w(z) \geq u_a(z) \geq \varphi(z) - C(n) \frac{a^n}{r^{n-2}} \quad \text{at some point}\quad  z \in \partial B_{2r}(0).
\]
Note that $w \in C^{2,\alpha}( \Omega)$.
By the uniform ellipticity of the linearized operator between $\varphi$ and $w$, the Harnack inequality then implies
\[
0\leq \varphi -w\leq C(n) (\varphi(z)-w(z))\leq  C(n) \frac{a^n}{r^{n-2}}  \quad \text{on } \partial B_{2r}(0).
\]
Note that we also have $\frac{\varphi(r x)}{r^2}, \frac{w(r x)}{r^2} \in C^{2,\alpha}(B_2(0))$. By considering  the linearized equation of $\frac{(w-\varphi)(r x)}{r^2\varepsilon}$, we obtain that
\[
\|w-\varphi \|_{L^{\infty}(B_{2r}(0))}  \leq 
\|w-\varphi \|_{L^{\infty}(\partial B_{2r}(0))}+C(n) \varepsilon r^2 \leq C(n)  \frac{a^n}{r^{n-2}} .
\]
This completes the proof for the non-negative case.

\medskip

\textbf{Step 2.3. Proof of \eqref{eq:asymptotic-quadratic-order local} for non-positive $\mu$ assuming $|\mu|\bigl(\Omega\setminus B_{1/32}\bigr) =0$.}

We now consider the case $\mu$ is non-positive, which implies $u \ge \varphi$. By subtracting a linear function and translation, we can assume $x_0 = 0$ and $\nabla \varphi(x_0)=0$. The proof below is analogous to that of the non-negative case. 

Let $\hat{\varphi} \geq \varphi$ be the solution of
\[
\M \hat{\varphi} =\M {u} \cdot \chi_{\Omega \setminus B_{r}(0)}+\M \varphi \cdot \chi_{B_{r}(0)} \quad \text{in }   \Omega, \quad \hat{\varphi}=\varphi \quad \text{on } \partial \Omega.
\]
Applying Theorem~\ref{thm:abp extremal} to the class $\mathcal{D}_{b,\hat{\varphi}}$, we obtain the existence of a subgradient $p \in \partial \hat{\varphi}(\Omega)$ and an obstacle solution $\hat{v} := \hat{v}_b(\cdot,p)$, whose coincidence set $\hat{K}$ contains $0$, such that
\[
u(0) \le \hat{v}(0).
\]
Here $\hat{v}_b(\cdot,p)  \ge \hat{\varphi}$ solves
\[
\M \hat{v}_b(\cdot,p)=\M \hat{\varphi} \cdot \chi_{\{\hat{v}_b(\cdot,p) > \hat{\ell}_p + \hat{h}_{b,p}\}} \quad \text{in } \Omega, \quad
\hat{v}_b(\cdot,p) = \hat{\varphi}
\quad \text{on } \partial\Omega,
\]
for some $\hat{h}_{b,p}>0$, where $\hat{\ell}_p$ denotes the supporting function to $\hat{\varphi}$ with slope $p \in \partial \hat{\varphi}(\Omega)$.
By Theorem~\ref{thm:ordera2 more} and the comparison principle, we have the estimate
\begin{equation}\label{eq:theorem 2.3 upper general}
\left|\hat{v}(0) - \varphi(0)\right| \leq \| \hat{v} - \varphi \|_{L^{\infty}(\partial B_{r/2}(0))} + d_{n,0} b^2 + C(n) \frac{b^{2+\beta}}{r^{\beta}}.
\end{equation}
Thus, it suffices to show that 
\begin{equation}\label{eq:theorem 2.3 upper general 1}
\kappa:=\| \hat{v} - \varphi \|_{L^{\infty}(\partial B_{r/2}(0))}  \leq C(n)  \frac{a^n}{r^{n-2}} .
\end{equation}

Applying Theorem~\ref{thm:ordera2 more}, we obtain
\[
\|\hat{v}-\varphi\|_{L^{\infty}(\Omega)} \le d_{n,0} a^2+ Ca^{2+\beta}.
\]
Since $\varphi\in \C_+^{2,\alpha}$, $0\in \hat{K}$, and $\hat{v}$ is linear on $\hat{K}$,
an elementary analysis gives
\[
|p|\leq Ca,\quad 
\hat{K}:=\left\{ \hat{v} = \hat{\ell}_{p}+\hat{h}_{b,p}\right\} \subset S_{Ca^2}^{\varphi} (0) \subset B_{Ca}(0)\subset B_{r/16}(0).
\] 
Therefore, $\det D^2 \hat{v} = \det D^2\hat{\varphi}=\det D^2 \varphi$ in $B_{r}(0)\setminus B_{r/16}(0)$, and it follows from \cite{savin2007perturbation} (see also \cite{jian2007continuity}) that $\hat{v} \in   C^{2,\alpha}(B_{7r/8}(0) \setminus B_{r/8}(0) )$.
Applying the Harnack inequality to the linearized equation between $\varphi$ and $\hat{v}$ then implies that
\begin{equation}\label{eq:harnacktildev}
\hat{v} -\varphi \approx \kappa  \quad \text{in } B_{3r/4}(0) \setminus B_{r/4}(0).
\end{equation}

Let $\tilde{w} \geq \varphi$ be the solution to
\begin{equation*}
\M \tilde{w} = \M \varphi-\varepsilon\eta \left(\left|\frac{|x|}{r}-\frac{1}{2}\right|\right)\quad \text{in }   \Omega, 
\quad \tilde{w} =\varphi \quad \text{on } \partial \Omega,
\end{equation*}
where $\varepsilon \approx  a^n / r^{n} > 0$ is chosen such that $\M  \tilde{w}(\Omega) = \M \varphi(\Omega) - \omega_n a^n$. 
By the comparison principle, as in Step 2, we deduce that $\hat{v} \leq w$ at some point in $B_{3r/4}(0) \setminus B_{r/4}(0)$. Consequently, because of \eqref{eq:harnacktildev}, it suffices to show that
\[
\|\tilde{w}-\varphi \|_{L^{\infty}(B_{3r/4}(0)\setminus B_{r/4}(0))}
\leq C(n) \frac{a^n}{r^{n-2}} .
\]

Let $v_a := v_a(\cdot,0)$ be the solution to \eqref{eq:defnva} with $p=0$, whose coincidence set $K_a$ is contained in $S_{Ca^2}^{\varphi}(0)\subset B_{r/16}(0)$, according to \cite[Lemma 3.11]{jin2025abp}.
Applying the comparison principle, we find that every connected component of $\{\tilde{w}< v_a\}$ must contain some point in $K_a \subset B_{r/16}(0)$.
Then, a parallel discussion to the non-negative case, combined with estimate \eqref{eq:dna2 obs ap x}, yields that, 
\[
\tilde{w}(z) \leq v_a(z) \leq \varphi(z) +C(n)\frac{a^n}{r^{n-2}} \quad \text{at some point} \quad  z \in \partial B_{2r}(0).
\]
which, together with the equation of $w$ and that $w \in C^{2,\alpha}( \Omega\setminus B_{r}(0))$, implying
\[
\tilde{w}-\varphi  \leq C(n) (\tilde{w}(z)-\varphi(z))\leq C(n)  \frac{a^n}{r^{n-2}} \quad \text{on } \partial B_{2r}(0).
\]
Note that we also have $\frac{\varphi(r x)}{r^2}, \frac{\tilde{w}(r x)}{r^2} \in C^{2,\alpha}(B_2(0))$. By considering  the linearized equation of $\frac{(\tilde{w}-\varphi)(r x)}{ \varepsilon r^2}$, we obtain that
\[
\|\tilde{w}-\varphi \|_{L^{\infty}(B_{2r}(0))}  \leq 
\|\tilde{w}-\varphi \|_{L^{\infty}(\partial B_{2r}(0))}+C(n) \varepsilon r^2 \leq C(n)  \frac{a^n}{r^{n-2}} .
\]
This completes the proof for the non-positive case.

\medskip

\textbf{Step 3. Proof of \eqref{eq:asymptotic-quadratic-order local}.} 

The Alexandrov estimate \eqref{eq:abp gene 1/n} implies that \eqref{eq:asymptotic-quadratic-order local} is nontrivial only  when $a$ is small.
Let $\tilde{\varphi}$ be the solution of
\[
\M \tilde{\varphi} =\M {\varphi} +\mu \cdot \chi_{B_{1/32}}\quad \text{in }   \Omega, \quad \tilde{\varphi}=\varphi \quad \text{on } \partial \Omega.
\]
Denote $|\mu|\bigl(\Omega\setminus B_{1/32}\bigr) = \omega_n \tilde{a}^n$ with $\tilde a \ge 0$. Then $u \in \D_{\tilde{a},\tilde{\varphi}}$. 
Applying Theorem~\ref{thm:abp extremal} to the class $\D_{\tilde{a},\tilde{\varphi}}$, we obtain 
\[
\tilde{u}_{\tilde{a}}(x_0 ,x_0)\leq u(x_0) \leq \tilde{v}_{\tilde{a}}(x_0 ,\tilde{p}).
\]
Here, $\tilde{u}_{\tilde{a}}(\cdot,x_0)$ solves $\M \tilde{u}_{\tilde{a}}(\cdot,x_0) =\M \tilde{\varphi} +\omega_n\tilde{a}^n \delta_{x_0}$ in $\Omega$ with $\tilde{u}_{\tilde{a}}(\cdot,x_0)=\tilde\varphi$,  and $\tilde{v}_{\tilde{a}}(\cdot,\tilde{p})$ solves the corresponding obstacle problem \eqref{eq:defnva} satisfying \eqref{eq:defnvaq 2} with $\tilde\varphi$ and $\tilde a$, whose coincidence set $\tilde{K}_{\tilde{a}}$ contains $x_0$, as defined in Section \ref{sec:abp estimates}. 

Note that  $B_{r}(x_0)\subset B_{1/32}$, $|\M \tilde{u}_{\tilde{a}}(\cdot,x_0)-\M \varphi| (\Omega\setminus  B_{1/32})=0$,
and 
\[
|\M \tilde{u}_{\tilde{a}}(\cdot,x_0)-\M \varphi|\bigl(B_{r}(x_0) \bigr) \le |\mu |\bigl(B_{r}(x_0)\bigr)+\omega_n \tilde{a}^n = |\mu |\bigl(B_{r}(x_0)\bigr)+|\mu|\bigl(\Omega\setminus B_{1/32}\bigr).
\]
The argument in Step 2.2 then applies and yields
\[
0\leq \varphi(x_0)-\tilde{u}_{\tilde{a}}(x_0 ,x_0) \leq  d_{n,0}b^2 + C(n)  \left(\frac{a^n}{r^{n-2}}+\frac{b^{2+\beta}}{r^{\beta}}\right).
\]

For the obstacle solution  $\tilde{v}_{\tilde{a}}(\cdot,\tilde{p})$, 
observe that $\tilde{v}_{\tilde{a}}(\cdot,\tilde{p})$ is linear on the $\tilde{K}_{\tilde{a}}$, and $x_0\in \tilde{K}_{\tilde{a}} \cap  B_{1/64}$.
Combining these observations with the estimates $\|\tilde{v}_{\tilde{a}}(\cdot,\tilde{p}) - \varphi\|_{L^{\infty}(\Omega)} \leq C(n) a |\Omega|^{\frac{1}{n}}$ and the regularity $\varphi \in C^{2,\alpha}(B_{1/32})$ (see \cite{caffarelli1990ilocalization,gutierrez2016monge}), we conclude that $\tilde{K}_{\tilde{a}} \subset B_{1/32}$ for sufficiently small $a$. This implies $|\M \tilde{v}_{\tilde{a}}(\cdot,\tilde{p})-\M \varphi| (\Omega\setminus  B_{1/32})=0$, and
\[
|\M \tilde{v}_{\tilde{a}}(\cdot,\tilde{p})-\M \varphi|\bigl(B_{r}(x_0) \bigr) \le |\mu |\bigl(B_{r}(x_0)\bigr)+\omega_n \tilde{a}^n =  |\mu |\bigl(B_{r}(x_0)\bigr)+|\mu|\bigl(\Omega\setminus B_{1/32}\bigr) .
\]
The argument in Step 2.3 then applies and yields
\[
0\leq \tilde{v}_{\tilde{a}}(x_0,\tilde{p}) - \varphi(x_0)\leq  d_{n,0}b^2 + C(n)  \left(\frac{a^n}{r^{n-2}}+\frac{b^{2+\beta}}{r^{\beta}}\right).
\] 

Combining the above estimates, we complete the proof of \eqref{eq:asymptotic-quadratic-order local}.

\medskip

\textbf{Step 4. Proof of \eqref{eq:asymptotic-quadratic-order local special} in the quadratic case.} 

By affine transformations and subtracting a linear function, we can assume, without loss of generality, that 
\[
\varphi(x)=\frac12|x|^{2}
\quad\text{and}\quad
x_{0}=0.
\]
The arguments in Steps~1--3 apply verbatim; in particular, \eqref{eq:theorem 2.3 lower general 1} and
\eqref{eq:theorem 2.3 upper general 1} remain valid. We will use the quadratic structure to strengthen the pointwise bounds
\eqref{eq:theorem 2.3 lower general} and \eqref{eq:theorem 2.3 upper general}.

\medskip

\textbf{Step 4.1. Improve the upper bound in Step~2.2.}

Fix $\rho:=r/2$. By the comparison principle,  
\[
\frac{1}{2}|x|^2 \geq \hat{u}(x)\geq W_{b}(x)-W_{b}\left(\rho  e_n\right)+\frac{1}{2}\rho^2-\| \hat{u} - \varphi \|_{L^{\infty}(\partial B_{\rho}(0))}\quad \text{in }B_{\rho}(0).
\]
Evaluating at $x=0$ and invoking \eqref{eq:theorem 2.3 lower general 1}, we obtain
\[
\begin{split}
\left|\hat{u}(0) -\varphi(0)\right| 
&\leq \| \hat{u} - \varphi \|_{L^{\infty}(\partial B_{\rho}(0))} + W_{b}\left(\rho  e_n\right)-\frac{1}{2}\rho^2 \\
&\leq \| \hat{u} - \varphi \|_{L^{\infty}(\partial B_{r/2}(0))} + d_{n,0} b^2 + C(n)  \frac{b^{n}}{\rho^{n-2}} \\
& \leq  d_{n,0}b^2 + C(n)  \frac{a^n}{r^{n-2}},
\end{split}
\]
where in the second inequality we used $W_{b}\left(\rho  e_n\right)=b^2W_1(\rho  e_n/b)$ and the expansion 
\[
W_1(x)=\frac{1}{2}|x|^2 +d_{n,0}   + O\left(|x|^{2-n}\right) \quad\mbox{as } x\to\infty.
\]

\medskip

\textbf{Step 4.2. Improve the upper bound in Step 2.3.}

Let $\rho:=r/2$ and define 
\[
w(x)=W_{b}^*(x-p)+p\cdot x-\frac{1}{2}|p|^2+d_{n,0}b^2+C(n)\frac{b^n}{\rho^{n-2}}, 
\]
where $p$ is the one in the definition of $\hat{v}_b(\cdot,p) $ in Step 2.3. Since $|p|\le Ca \ll \frac{r}{2}=\rho$, we have  
\[
\begin{split}
w(x)-\frac{1}{2}|x|^2 &=W_{b}^*(x-p)-\frac{1}{2}|x-p|^2+d_{n,0}b^2+C(n)\frac{b^n}{|x-p|^{n-2}} \\
&=W_{b}^*(x-p)-\frac{1}{2}|x-p|^2+d_{n,0}b^2+C(n)\frac{b^n}{\rho^{n-2}}  \geq 0 \quad \text{on } \partial   B_{\rho}(0).
\end{split}
\]
Moreover, $w$ and $\hat v$ correspond to obstacles with the same slope $p$. Hence, by the comparison principle in Lemma \ref{lem:comparison principle equal volume} to $w(x)-p\cdot x+\| \hat{v} - \varphi \|_{L^{\infty}(\partial B_{\rho}(0))}$ and $\hat{v}(x)-p\cdot x$, we have
\[
\frac{1}{2}|x|^2 \leq \hat{v}(x)\leq w(x)+\| \hat{v} - \varphi \|_{L^{\infty}(\partial B_{\rho}(0))} \quad \text{in }   B_{\rho}(0).
\]
Evaluating at $x=0$ and using \eqref{eq:theorem 2.3 lower general 1} yields 
\[
\begin{split}
\left|\hat{v}(0) -\varphi(0)\right| 
&\leq \| \hat{v} - \varphi \|_{L^{\infty}(\partial B_{\rho}(0))} + w(0) \\
& = \| \hat{v} - \varphi \|_{L^{\infty}(\partial B_{r/2}(0))} + W_{b}^*(-p)-\frac{1}{2}|p|^2+d_{n,0}b^2+C(n)\frac{b^n}{\rho^{n-2}} \\
& \leq \| \hat{v} - \varphi \|_{L^{\infty}(\partial B_{r/2}(0))} +d_{n,0}b^2+C(n)\frac{b^n}{\rho^{n-2}} \\
& \leq  d_{n,0}b^2 + C(n)  \frac{a^n}{r^{n-2}},
\end{split}
\]
where in the second inequality, we used $W_{b}^*\left(\rho  e_n\right)=b^2W_1^*(\rho  e_n/b)$ and the expansion 
\[
W_1^*(x)=\frac{1}{2}|x|^2 -d_{n,0} + O\left(|x|^{2-n}\right)\quad \mbox{as } x\to\infty.
\]

Combining the above estimates with the arguments in Steps 1--3, we conclude the proof of \eqref{eq:asymptotic-quadratic-order local special}.
\end{proof}

\section{Proof of the main results}\label{sec:main results proof}

We will use the following lemma showing that the section of an entire solution is bounded.
\begin{Lemma}\label{lem:volume upper bound}
Let $u$ be a convex function on a convex domain $\Omega$ satisfying $\det D^2 u = 1 + \mu$, where $\mu$ is a signed measure with $|\mu|(\Omega) = \omega_n a^n < \infty$ for some $a \ge 0$. 
Assume $0 \in \Omega$, $u(0) = \inf_\Omega u = 0$, and set $S_h := \{ x \in \overline{\Omega}: u(x) < h \}$. Then for every $h \ge a$, we have
\[
|S_h \cap \Omega| \leq C(n)h^\frac{n}{2}.
\]
\end{Lemma}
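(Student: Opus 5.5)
The plan is to compare the Monge--Amp\`ere measure of a section with the volume of the cone over it. Fix $h\ge a$ and set $S:=S_h\cap\Omega$, a convex set containing $0$ (open, or relatively open in $\Omega$). Since $\det D^2u=1+\mu$ with $1+\mu\ge 0$, we have
\[
|S| = \LL(S) \le \M u(S) + |\mu|(S) \le \M u(S) + \omega_n a^n .
\]
So it suffices to bound $\M u(S)$ by a quantity that grows sublinearly in $|S|$, roughly $C h^n/|S|$, and then absorb the error. The standard tool is the subgradient image of the cone. Let $K$ be the convex set $S_h$, and let $\hat v$ be the cone with vertex $(0,0)$ over the boundary data $h$ on $\partial K$. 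Convexity of $u$ together with $u(0)=0$ and $u<h$ in $S$ gives the inclusion $\partial \hat v(0)\subset \partial u(\overline S)$.

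For the reversed direction we need the other estimate, an upper bound on $\M u(S)$. Gradients are controlled from above in a slightly smaller section, but $\M u$ could concentrate near $\partial S$. I would therefore work with the doubled section instead. Take $x\in S_{h}$ and $p\in\partial u(x)$. If $S_{2h}$ stays inside $\Omega$ (or, more generally, wherever it reaches), convexity gives $p\cdot(y-x)\le u(y)-u(x)\le 2h$ for every $y\in S_{2h}$. Hence $p$ lies in $2h\,(S_{2h}-x)^{\circ}$. By John's lemma and the fact that $x\in S_h\subset S_{2h}$ sits well inside $S_{2h}$ in the affine sense, this gives
\[
\M u(S_h)\le C(n)\,h^n/|S_{2h}| .
\]
(Here the key point is that $S_h\subset \tfrac12$-dilation of $S_{2h}$ about $0$, which follows from convexity and $u(0)=0$.)

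Combining the two displays,
\[
|S_h|\le C h^n/|S_{2h}| + \omega_n a^n \le C h^n/|S_h| + \omega_n h^n ,
\]
using $|S_h|\le|S_{2h}|$ and $a\le h$. Now consider two cases. If $|S_h|\le 2\omega_n h^n$, this alone is far from $h^{n/2}$, so the absorption must be done at the correct scale. I would therefore compare instead with the lower bound. The cone argument gives $\M u(S_h)\ge c\,h^n/|S_h|$, and with it one obtains $|S_h|\le \M u(S_h)+\omega_n a^n\le C h^n/|S_{2h}|+\omega_n a^n$. From $|S_h|^2\lesssim h^n$ we get $|S_h|\lesssim h^{n/2}$ when the first term dominates. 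When instead $\omega_na^n\ge |S_h|/2$, we have $|S_h|\le 2\omega_n a^n$, and this is $\le C h^{n/2}$ only if $a^n\lesssim h^{n/2}$, i.e.\ $h\gtrsim a^2$.

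This mismatch is the main obstacle: the hypothesis $h\ge a$ does not literally give $h\ge a^2$. I expect the intended normalization to be that in the intended application $a$ is measured so that the two regimes match (possibly the statement means $h\ge a^2$, the natural scaling of Theorem \ref{thm:abp global rigidity}). Failing that, one would handle small $h$ by monotonicity from the scale $h=a^2$.

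A second delicate point is $S_{2h}\not\subset\Omega$. There I would use the restriction to $\overline\Omega$ and the fact that the polar estimate only needs $y$ in the domain where $u\le 2h$, which still contains $2S_h\cap\Omega$ by convexity.
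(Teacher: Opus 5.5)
\textbf{Gap: the upper bound on $\M u(S_h)$.} The final two-case absorption is fine, but the inequality it rests on, $\M u(S_h)\le C(n)h^n/|S_{2h}|$, is not established. Your derivation uses only convexity, and for general convex functions this bound is false.

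The inclusion you invoke is backwards. Convexity and $u(0)=0$ give $u(y/2)\le\tfrac12 u(y)$, i.e.\ $\tfrac12 S_{2h}\subset S_h$, not $S_h\subset\tfrac12 S_{2h}$. For the same reason $2S_h\not\subset S_{2h}$ in general, so your fix for $S_{2h}\not\subset\Omega$ fails too.

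More importantly, dilation about the minimum point is the wrong notion of ``well inside''. The polar bound $|(K-x)^\circ|\le C(n)/|K|$ needs $x$ to be affinely central in $K$, for instance in the half-dilation of $K$ about its centroid. Neither $0$ nor points of $S_h$ need be central. In one dimension, $u(\tau)=\max\{2h\tau/t,\,-2h\tau/s\}$ with $t\ll s$ has $S_h=\tfrac12 S_{2h}$ and $S_{2h}=(-s,t)$. Yet $\M u(\{0\})=2h/t+2h/s\gg h/(s+t)$. A convexity-only argument cannot see that the concentrated part of $\M u$ has mass at most $\omega_n a^n$.

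The cone lower bound $\M u(S_h)\gtrsim h^n/|S_h|$ is correct, but it bounds $|S_h|$ from below, so it plays no role here.

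\textbf{How to repair it.} Change the set on which you test the equation. Let $K:=S_h\cap\Omega$, intersected with a large ball if it is unbounded, and let $K'$ be its half-dilation about its centroid. For $x\in K'$, $p\in\partial u(x)$ and $y\in K$, you have $p\cdot(y-x)\le u(y)-u(x)<h$ because $u\ge 0$. Hence $\partial u(K')\subset h\bigcup_{x\in K'}(K-x)^\circ$. After an affine normalization of $K$ by John's lemma, all these polars lie in a single set of volume $\le C(n)/|K|$, so $\M u(K')\le C(n)h^n/|K|$. Then
\[
2^{-n}|K|=|K'|\le \M u(K')+\omega_n a^n\le C(n)\frac{h^n}{|K|}+\omega_n a^n ,
\]
and your two-case absorption gives $|K|\le C(n)h^{n/2}$, provided $h\ge a^2$. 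This is a more elementary route than the paper's. The paper instead solves $\det D^2\tilde u=1+\mu$ and $\det D^2\varphi=1$ on $S_1$ with boundary value $1$. It then combines the Alexandrov estimate $\|\tilde u-\varphi\|_{L^\infty}\le C(n)a|S_1|^{1/n}$ with Caffarelli's bound $1-\inf\varphi\ge c(n)|S_1|^{2/n}$.

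\textbf{The hypothesis $h\ge a$.} Your suspicion is right, but no argument, including monotonicity from $h=a^2$, can cover $a\le h<a^2$. Monotonicity only gives $|S_h|\le C a^n$, which is too large. In fact the statement with $h\ge a$ is false for large $a$. Take $u=W_a^*$ on $\R^n$. Then $\mu=-\chi_{B_a}$, $|\mu|(\R^n)=\omega_n a^n$, and $u(0)=\inf u=0$. Also $S_a\supset B_a$, so $|S_a|\ge\omega_n a^n>C(n)a^{n/2}$ once $a$ is large.

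The paper's own proof actually uses $h\ge a^2$. The rescaling $u(h^{1/2}x)/h$ replaces $a$ by $ah^{-1/2}$, so its normalization ``$a\le 1$'' means $h\ge a^2$. The only application takes $h=R^2/2\to\infty$, so it is unaffected.
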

\begin{proof}
Without loss of generality, we assume $\Omega$ is bounded. For simplicity, we normalize $h=1$ via the rescaling $u(x) \mapsto u(h^{1/2}x)/h$ and assume $a \leq 1$. Let $\tilde{u}$ and $\varphi$ solves
\[
\det D^2 \tilde{u}=1+\mu \quad\text{in }  S_1, \quad  \tilde{u}=1  \quad \text{on } \partial S_1,
\]
and
\[
\det D^2 \varphi=1 \quad\text{in }  S_1, \quad  \varphi=1  \quad \text{on } \partial S_1,
\]
respectively. 
The comparison principle yields $\tilde{u} \ge u \ge 0$ in $S_1$. Moreover, by the Alexandrov estimate \eqref{eq:abp gene 1/n},
\[
\|\tilde{u}- \varphi\|_{L^{\infty}(S_1)} \leq C(n) a |S_1|^{\frac{1}{n}} \leq C(n) |S_1|^{\frac{1}{n}}.
\]
Since $S_1$ is a section of $\varphi$ and $\det D^2 \varphi = 1$, it follows from \cite{caffarelli1990ilocalization} that the height satisfies 
\[
1 - \inf_{S_1} \varphi\geq c(n)  |S_1|^{2/n}.
\] 
Combining these estimates, we conclude that $|S_1| \leq C(n)$ by noting that
\[
c(n)|S_1|^{2/n} \leq  (1 - \inf_{S_1} \varphi ) \leq  (1 - \inf_{S_1}\tilde{u})+C(n) |S_1|^{\frac{1}{n}} \leq 1+C(n) |S_1|^{\frac{1}{n}}.
\] 
\end{proof}

We now prove our theorems. 

\begin{proof}[Proof of Theorem \ref{thm:abp global existence}]
Without loss of generality, we assume $b =c= 0$.
Let us denote 
\[
E_R(x_0):=E_A(x_0, R) =\left\{x:\; (x-x_0)^{\top} A (x-x_0) <R^2\right\},\quad E_R:=E_{R}(0).
\]
For each $R >0$, let $u_R$ be the solution to 
\begin{equation}\label{eq:approx varphi w R}
\det D^2 u_R=1+\mu\cdot \chi_{E_{R/32}}  \quad\text{in }  E_R, \quad  u_R=\frac{1}{2} R^2\quad \text{on } \partial E_R.
\end{equation}
Applying \eqref{eq:asymptotic-quadratic-order local special} in Theorem~\ref{prop:decay estimate local} to
\[
w_R(x)=  R^{-2}u_R(RA^{-\frac{1}{2}}x) \quad \text{and} \quad \varphi(x)=\frac{1}{2} |x|^2
\]
with $\Omega=B_{1}(0)$, we obtain that for every $0<r\le 1/64$ the following estimate holds: 
\[
\left|w_R(x) -\frac{1}{2} |x|^2\right| \leq d_{n,0}b_{r,R}^2 + C(n)\frac{a_R^n}{r^{n-2}} \quad \text{in } B_{1/64}(0),
\]
where $a_R \geq  b_{r,R}\geq 0$ satisfy
\[
\omega_n a_R^n=|\mu\cdot \chi_{E_{R/32}}|(E_R(0))/R^n, \quad 
\omega_n b_{r,R}^n=|\mu\cdot \chi_{E_{R/32}}|\bigl(E_{rR}(R A^{-\frac{1}{2}} x)\bigr) /R^n.
\] 
For every $\rho>0$, we choose $r=\rho/R\le 1/64$ if $R$ is large. Scaling back and noting that $|\mu\cdot \chi_{E_{R/32}}|(E_{R}(0)) \leq |\mu|(\Omega) = \omega_n a^n$, we find that
\begin{equation}\label{eq:decay R rho}
\left| u_R(x) - \frac{1}{2}x^{\top}Ax   \right| \leq d_{n,0}b_{\rho}^2+ C(n)  \frac{a^n}{\rho^{n-2} } 
\quad \text{in } E_{R/64}(0),
\end{equation}
and $0 \leq b_{\rho} \leq a$ is given by 
\[
\omega_n b_{\rho}^n:=|\mu\cdot \chi_{E_{R/32}}|(E_{\rho}(x)) \leq \omega_na^n.
\]
By \eqref{eq:decay R rho}, the family ${u_R}$ is locally uniformly bounded  in $R$ for large $R$ and fixed $x$. Hence, a subsequence of ${u_R}$ converges locally uniformly to an entire solution $u$ of \eqref{eq:global-ma-eq-mu}.

Taking the limit along a subsequence of $R \to \infty$ in \eqref{eq:decay R rho}, we obtain
\[
\left| u(x) - \frac{1}{2}x^{\top}Ax   \right| \le  d_{n,0}\bigl(\omega_n^{-1}|\mu|(E_A(x,\rho))\bigr)^{\frac{2}{n}} + C(n)  \frac{a^n}{\rho^{n-2}} .
\]  
This establishes \eqref{eq:global decay}.
Setting $\rho = |x|/2$ in \eqref{eq:global decay} yields \eqref{eq:asymptotic-quadratic}. Taking the limit as $\rho \to \infty$ implies \eqref{eq:abp-global}.

Uniqueness follows from the comparison principle. 

This establishes Theorem \ref{thm:abp global existence}. 

Furthermore, this unique solution satisfies \eqref{eq:abp-global} and  \eqref{eq:global decay}.
\end{proof}

\begin{proof}[Proof of Theorem \ref{thm:abp global rigidity}]  
We consider $u$ is a solution of \eqref{eq:global-ma-eq-mu} with $\mu=\M u-\LL$. 

\medskip

\textbf{Step 1.} We will first prove part (i).

Without loss of generality, we assume $u(0) = \inf_{\R^n} u = 0$.
For each large $R >0$, let us denote $D_R:=\{x:\; u(x)< \frac{1}{2}R^2 \}$.  Lemma \ref{lem:volume upper bound} gives the volume bound $|D_R| \le C(n) R^n$. Since $u$ is locally Lipschitz and $D_1 \subset D_R$, the set $D_R$ is bounded.
Let $\varphi_R$ solve
\begin{equation*}\label{eq:approx u varphi R}
\det D^2 \varphi_R=1 \quad\text{in }   D_R, \quad  \varphi_R= u = \frac{1}{2}R^2\quad \text{on } \partial D_R.
\end{equation*}
Observe that $D_R$ is a section of $\varphi_R$. We can denote $S_{h_R}(x_R)=S_{h_R}^{\varphi_R}(x_R)=D_R$ for some $x_R \in D_R$ satisfying $\varphi_R(x_R)=\inf_{D_R} \varphi_R= \frac{1}{2}R^2-h_R$.
The Alexandrov estimate \eqref{eq:abp gene 1/n} implies 
\[
\|u- \varphi_R\|_{L^{\infty}(D_R)}  \leq  C(n)ah_R^{\frac{1}{2}}.
\]
Evaluation at $0$ and $x_R$ yields $|h_R - \frac{1}{2}R^2| \leq C(n) a h_R^{1/2}$. This implies $h_R \leq \frac{1}{2}R^2 + C(n)a^2$. 
For $R \geq C(n)a$, we further obtain 
\[
\left|h_R - \frac{1}{2}R^2\right| \leq C(n) a R\quad  \text{and } \quad  \|u - {\varphi}_R\|_{L^{\infty}(D_R)}  \leq  C(n)aR.
\] 

Since $\det D^2 \varphi_R=1$,  there exists an affine transformation $A_R \in \mathcal{A}_n$ such that
\[
A_Rh_R^{\frac{1}{2}} B_{1/C_0} \subset (S_{h_R}(x_R) -x_R) \subset A_R h_R^{\frac{1}{2}}B_{C_0} 
\]
for some positive constant $C_0(n)$.
Let us consider the transforms $T_Rx= A_RRx+x_R$ and the normalized functions
\[
\tilde{u}_R(x)=  \frac{ u(T_R x)}{R^2} -\frac{1}{2} ,\quad \tilde{\varphi}_R(x) =  \frac{\varphi_R(T_R x)}{R^2} -\frac{1}{2}  ,\quad \text{in } \widetilde{\Omega}_R =T_R^{-1}D_R.
\] 
Then, $B_{1/(2C_0)}  \subset \widetilde{\Omega}_R \subset B_{2C_0}$.
Applying \eqref{eq:asymptotic-quadratic-order local} in Theorem~\ref{prop:decay estimate local} to $\frac{\tilde{u}_R(2C_0x)}{4C_0^2}$ and $\frac{\tilde{\varphi}_R(2C_0x)}{4C_0^2}$ in $2\widetilde{\Omega}_R$ yields for every $0< r \leq 1/(128C_0)$:
\begin{equation}\label{eq:decay R rho 2normal}
\left|\tilde{u}_R(x) -\tilde{\varphi}_R(x)\right| \leq d_{n,0}\tilde{b}_{r,R}^2 + C(n)  \left(\frac{\tilde{a}_R^n}{r^{n-2}}+
\frac{\tilde{b}_{r,R}^{2+\beta}}{r^{\beta}}
\right) \quad \text{in } B_{1/(128C_0)}(0),
\end{equation}
where $\beta = \frac{(n-2)}{2n+1}$, and $\tilde{a}_R\geq  \tilde{b}_{r,R}\geq 0$ satisfy
\[
\omega_n \tilde{a}_R^n=R^{-n}|\mu|(D_R) \leq \omega_n R^{-n}a^n, \quad \omega_n \tilde{b}_{r,R}^n=R^{-n}(|\mu|\bigl(T_RB_r(x)\bigr)  +|\mu|\bigl(D_R \setminus T_R B_{1/32}\bigr) ) .
\]   
Since $\tilde{\varphi}_R$ is locally $C^{2,\alpha}$ and uniformly convex in $\widetilde{\Omega}_R$, for any  $x \in B_{1/(64C_0)}(0)$, we have $S_{c(n)r^2}^{\tilde{\varphi}_R}(x) \subset B_{r}(x) \subset S_{C(n)r^2}^{\tilde{\varphi}_R}(x)  $.
From this inclusion we obtain 
\[
\omega_n \tilde{b}_{r,R}^n  \leq R^{-n}\left(|\mu|\bigl(S_{Cr^2R^2}^{\varphi_R}(T_R x)\bigr)+|\mu|\bigl(D_R \setminus  T_R B_{1/32}\bigr)\right) .
\]
Note that the function $\tilde{\varphi}_R \in \C_{+}^{2,\alpha}(B_{1/(4C_0)})$ attains its minimum at $0$, and $\tilde{u}_R$ attains its minimum at $T_R^{-1}(0)$. Then from the uniform bound 
\[
\|\tilde{u}_R - \tilde{\varphi}_R\|_{L^{\infty}(\widetilde{\Omega}_R)} =R^{-2} \|u - {\varphi}_R\|_{L^{\infty}(D_R)} \leq  C(n)aR^{-1},
\]
we obtain
\[
\begin{split}
\tilde{\varphi}_R(T_R^{-1}(0))
&\geq \tilde{\varphi}_R(0) +c|T_R^{-1}(0)|^2 \\
&\geq \tilde{u}_R (0) -C(n)aR^{-1} +c|T_R^{-1}(0)|^2 \\
&\geq \tilde{u}_R(T_R^{-1}(0)) -C(n)aR^{-1} +c|T_R^{-1}(0)|^2 \\
&\geq \tilde{\varphi}_R(T_R^{-1}(0)) -2C(n)aR^{-1} +c|T_R^{-1}(0)|^2 .
\end{split}
\]
This implies $|T_R^{-1}(0)|$ is small when $R$ is large; hence $ c_1 D_R \subset  T_R B_{1/(128C_0)}(0)$ holds for small $c_1(n)> 0$. 
Scaling back the estimate \eqref{eq:decay R rho 2normal}, we now derive
\begin{equation}\label{eq:decay R rho 2}
\left| u(x)-\varphi_R(x) \right| \leq d_{n,0}b_{\rho}^2+ C(n)  \left( \frac{a^n}{\rho^{n-2} }+
\frac{b_{\rho}^{2+\beta}}{\rho^{\beta}}
\right) \quad \text{in } c_1(n)D_R,
\end{equation}
where $\rho=rR$ and $b_{\rho} \geq 0$ satisfies
\[
\omega_n b_{\rho}^n\leq |\mu|(S_{C(n)\rho^2}^{\varphi_R}(x)) +|\mu|\bigl(D_R \setminus  T_R B_{1/32}\bigr) \leq |\mu|(S_{C(n)\rho^2}^{\varphi_R}(x)) +|\mu|\bigl(D_R \setminus  c_1D_R\bigr).
\]
By \eqref{eq:decay R rho 2}, the family ${\varphi_R}$ is locally  uniformly bounded for large $R$. Thus, a subsequence converges locally uniformly to an entire solution of $\det D^2 \varphi = 1$. The Jörgens-Calabi-Pogorelov theorem then implies that $\varphi(x) = \frac{1}{2}x^{\top}Ax + b \cdot x + c$ for some $A \in \A_n$, $b \in \mathbb{R}^n$, and $c \in \mathbb{R}$. Taking $R \to \infty$ in \eqref{eq:decay R rho 2}, we find that
\[
\begin{split}
& \left| u(x) -\left(\frac{1}{2}x^{\top}Ax + b \cdot x + c\right)   \right|  \\
& \le  d_{n,0}\bigl(\omega_n^{-1}|\mu|(E_A(x,C(n)\rho))\bigr)^{\frac{2}{n}} +  C(n)\left(\frac{a^n}{\rho^{n-2}}+
\frac{\bigl(\omega_n^{-1}|\mu|(E_A(x,C(n)\rho))\bigr)^{\frac{2+\beta}{n}}  }{\rho^{\beta}}
\right).
\end{split}
\] 
Setting $2C(n)\rho = |x| $ yields \eqref{eq:asymptotic-quadratic}.

\medskip

\textbf{Step 2.}  By Theorem \ref{thm:abp global existence}, this function $u$ is the unique solution of \eqref{eq:global-ma-eq-mu} with $\mu=\M u-\LL$ satisfying \eqref{eq:asymptotic-quadratic}. As mentioned in the end of the proof of Theorem \ref{thm:abp global existence}, $u$ satisfies \eqref{eq:abp-global}. This proves part (ii) of Theorem \ref{thm:abp global rigidity}.

\medskip

\textbf{Step 3.}  We now prove part (iii) in Theorem \ref{thm:abp global rigidity}, that is, the equality
\[
\left\| u(x) - \left(\frac{1}{2}x^{\top}Ax  + b \cdot x + c\right) \right\|_{L^{\infty}(\R^n)} =d_{n,0} a^2
\]
holds if and only if, up to a unimodular affine transformation and the addition of linear functions, $u$ equals the function $W_a$ defined in \eqref{eq:isolated global solution} or $W_a^*$ defined in \eqref{eq:obstacle global solution}.

By the asymptotic expansion \eqref{eq:asymptotic-quadratic}, we may assume that the supremum norm is attained at the origin. 
We also assume $b =c= 0$, and thus $u(0) = \pm d_{n,0}a^2$.
Consider the rescaled solutions $u_R$ defined in \eqref{eq:approx varphi w R}. 
For a given small $\sigma > 0$, apply Theorem \ref{thm:rigidity} to $w_R(x) = R^{-2} u_R(R A^{1/2} x)$ and $\varphi(x) = \frac{1}{2} |x|^2$ on the domain $\Omega = B_1(0)$ and then passing to the limit $R \to \infty$ yields
\begin{align*}
\frac{ u(0)-\varphi(0)}{d_{n,0}a^2} 
& \geq -1+C(n,\sigma)\left(\frac{\omega_na^n-\mu \left( E_{\sigma a}(0)  \right) }{\omega_na^n} \right)^\frac{n}{2}, \\
\frac{ u(0)-\varphi(0)}{d_{n,0}a^2} 
& \leq 1-C(n,\sigma)\left(\frac{\omega_na^n+\mu \left(E_{(1+\sigma)a} (0)  \right)}{ \omega_na^n} \right)^n,
\end{align*}
where $E_R(0)=E_A(0, R) $.

Note that $\varphi(0)=0$. If $u(0) = -d_{n,0}a^2$, then we have $\mu(E_{\sigma a}(0)) = \omega_n a^n > 0$, and the measure $\mu$ must be a non-negative measure supported in $E_{\sigma a}(0)$. If $u(0) = d_{n,0}a^2$, then $\mu(E_{(1+\sigma)a}(0)) = -\omega_n a^n < 0$, and the measure $\mu$ must be a non-positive measure supported in $E_{(1+\sigma)a}(0)$.
Since $\sigma > 0$ can be arbitrary small and $1 + \mu \geq 0$, we conclude that either $\mu = \omega_n a^n \delta_0$ or $\mu = -\chi_{E_{a}(0)}$. Then it follows from \cite{jin2016solutions} that, up to a unimodular affine transformation and the addition of linear functions, $u$ equals the function $W_a$ or $W_a^*$. 
\end{proof}

 \begin{proof}[Proof of Theorem \ref{thm:global decay}]
 By Theorem \ref{thm:abp global existence}, this function $u$ is the unique solution of \eqref{eq:global-ma-eq-mu} with $\mu=\M u-\LL$ satisfying \eqref{eq:asymptotic-quadratic}. As mentioned in the end of the proof of Theorem \ref{thm:abp global existence}, $u$ satisfies \eqref{eq:global decay} . This proves Theorem \ref{thm:global decay}.
 \end{proof}

\begin{proof}[Proof of Theorem \ref{prop:strictly convex cond global}] 
After an appropriate affine transformation, we may assume that $u$ asymptotically approaches $ \frac{1}{2}|x|^2$ at infinity in the sense of \eqref{eq:asymptotic-quadratic}.
Denoting $a^n = \sum_{i=1}^{\infty} a_i^n$, the estimate \eqref{eq:abp-global} implies
\[
 \frac{1}{2}|x|^2 -d_{n,0}a^2 \leq u(x) \leq \frac{1}{2}|x|^2.
\]  

Now, suppose for contradiction that $u$ is not strictly convex. By the results in \cite{caffarelli1990ilocalization, caffarelli1993note}, it must then be linear on a non-degenerate $E$ with vertices in $\{y_1, y_2, \dots\}$. After a unimodular affine transformation and the addition of linear functions, we may assume that $u$ is linear on the segment connecting $y_1 = b e_n$ and $y_2 =-be_n$ for some $b>0$. It follows that
\[
0\geq 2u(0)= u(y_1)+u(y_2) \geq 2\left(\frac{b^2}{2}-d_{n,0}a^2\right)= b^2-2d_{n,0}a^2,
\]
which implies $b^2 \leq 2d_{n,0}a^2$. Consequently,
\[
|y_1-y_2|^n =(2b)^n\leq(8d_{n,0})^{\frac{n}{2}}\sum_{i=1}^{\infty}  a_i^n.
\]
This contradicts \eqref{eq:sufficient cond for sc}; hence, $u$ must be strictly convex. The regularity theory established in \cite{caffarelli1990ilocalization,caffarelli1990interiorw2p,caffarelli1991regularity} and \cite{savin2005obstacle,huang2024regularity} then becomes applicable, yielding the desired regularity for $u$.
\end{proof}

\begin{proof}[Proof of Theorem \ref{thm:global deviation}.]
Let $\mu:=\M u-\LL.$ If $|\mu|(\mathbb{R}^n) = 0$, then it follows from the Jörgens-Calabi-Pogorelov theorem. If $|\mu|(\mathbb{R}^n) = \infty$, the estimate \eqref{eq:global deviation} is immediate since the right-hand side is infinite.

We therefore assume $0<|\mu|(\mathbb{R}^n) < \infty$. By Theorem \ref{thm:abp global rigidity}, after a suitable unimodular affine transformation we may assume for simplicity that
\[
\limsup_{|x| \to \infty} \left| u(x) -  \frac{1}{2}|x|^2   \right| = 0.
\]
Let us denote
\[
\mathcal{P}=\left\{P(x)=\frac{1}{2}x^{\top}Ax  + b \cdot x+c:\; A \in \A_n,\ b \in \mathbb{R}^n,\ c \in \mathbb{R} \right\}.
\]
Then any polynomial $P\in\mathcal{P}$ satisfying $\left\| u(x) - P(x) \right\|_{L^{\infty}(\R^n)}<\infty$ must be of the form $P(x)=\frac{1}{2}|x|^{2}+c$ for some $c\in\R$. Consequently,
\begin{equation}\label{eq:global deviation reduction}
\begin{split}
\inf_{P\in\mathcal{P}}\bigl\|u(x)-P(x)\bigr\|_{L^{\infty}(\mathbb{R}^{n})}  &=\inf_{c\in\mathbb{R}}\left\|u(x)-\frac{1}{2}|x|^{2}-c\right\|_{L^{\infty}(\R^{n})}  \\
&=\frac{1}{2}\left(\sup_{\R^n}\left( u(x)-\frac{1}{2}|x|^2\right)-\inf_{\R^n} \left(u(x)-\frac{1}{2}|x|^2\right)\right).
\end{split}
\end{equation} 
Therefore, it suffices to estimate for
\[
\inf_{\R^n} \left(u(x)-\frac{1}{2}|x|^2\right)\quad \text{and} \quad \sup_{\R^n}\left( u(x)-\frac{1}{2}|x|^2\right) .
\] 

\medskip

\textbf{Step 1. Verification of  \eqref{eq:global deviation}.} Assume $0<|\mu|(\mathbb{R}^n) < \infty$. Let us express
\[ 
\mu=\mu_+-\mu_-,  
\]
where $\mu_+$ and $\mu_-$ are non-negative measures satisfying $\mu_+ \perp \mu_-$, and denote
\[
|\mu|(\R^n)=\omega_n a^n,\quad \mu_+(\R^n) =\omega_n a_+^n,\quad \mu_-(\R^n) =\omega_n a_-^n
\]
for  non-negative $a_+, a_-, a$. Recalling Theorem \ref{thm:abp global rigidity},  
let $u_+$ and $u_-$ be the convex solutions to  
\begin{align*}
& \M u_{+}=1+\mu_+  \quad \text{in } {\R^n}, \quad \limsup_{|x| \to \infty} \left| u_{+}(x) -  \frac{1}{2}|x|^2   \right| = 0, \\
& \M   u_{-} =1-\mu_- \quad \text{in } {\R^n}, \quad   \limsup_{|x| \to \infty} \left| u_{-}(x) -  \frac{1}{2}|x|^2   \right| = 0,  
\end{align*} 
respectively. 
Then, by the comparison principle and Theorem \ref{thm:abp global rigidity}, we have
\[
\frac{1}{2}|x|^2 -d_{n,0} a_{+}^2 \leq u_{+} \leq u \leq u_{-} \leq \frac{1}{2}|x|^2 +d_{n,0} a_-^2.
\]
Consequently,
\begin{equation}\label{eq:global deviation mid}
-d_{n,0} a_{+}^2 \leq u_{+}(x) -\frac{1}{2}|x|^2\leq u(x)- \frac{1}{2}|x|^2\leq u_{-}(x)-\frac{1}{2}|x|^2 \leq   d_{n,0} a_-^2.
\end{equation}
Recalling \eqref{eq:global deviation reduction}, we find that  
\begin{equation}\label{eq:global deviation mid2}
\inf_{P\in\mathcal{P}}\bigl\|u(x)-P(x)\bigr\|_{L^{\infty}(\mathbb{R}^{n})} \leq \frac{1}{2}d_{n,0}( a_-^2+a_+^2)\leq 2^{-\frac{2}{n}} d_{n,0}( a_-^n+a_+^n)^{\frac{2}{n}} = 2^{-\frac{2}{n}} d_{n,0}a^2.
\end{equation}
This establishes \eqref{eq:global deviation}.

\medskip

\textbf{Step 2. Strict inequality in  \eqref{eq:global deviation}.} 
We are going to prove that equality in \eqref{eq:global deviation} can never be attained when $0<|\mu|(\mathbb{R}^n) < \infty$. 

Suppose, for contradiction, that equality in \eqref{eq:global deviation} holds for some convex function $u$ with $|\mu|(\mathbb{R}^n)\in (0,\infty)$. 
Let $a_+$, $a_-$, $u_+$ and $u_-$ be as in Step 1. 
Then, by \eqref{eq:global deviation reduction} and \eqref{eq:global deviation mid2}, we have $a_+=a_->0$ and
\begin{align*}
\inf_{\R^n} \left(u(x) -\frac{1}{2}|x|^2\right)=-d_{n,0} a_{+}^2 
\quad \text{and} \quad 
\sup_{\R^n}\left( u(x)-\frac{1}{2}|x|^2\right) =   d_{n,0} a_-^2 .
\end{align*}
Moreover, recalling the asymptotic behavior of $u$, we may assume that these extremal values are attained at $y_{+}$ and $y_{-}$, respectively. 
It then follows from \eqref{eq:global deviation mid} that
\begin{align*}
\inf_{\R^n} \left(u_{+}(x) -\frac{1}{2}|x|^2\right)=-d_{n,0} a_{+}^2
\quad \text{and} \quad
\sup_{\R^n}\left( u_{-}(x)-\frac{1}{2}|x|^2\right) =   d_{n,0} a_-^2,
\end{align*}
and these extremal values are attained at $y_{+}$ and $y_{-}$, respectively.
Consequently,
\[
\left\| u_{+}(x) -\frac{1}{2}|x|^2 \right\|_{L^{\infty}(\R^n)} = d_{n,0} a_{+}^2 \quad \text{and} \quad \left\| u_{-}(x) -\frac{1}{2}|x|^2 \right\|_{L^{\infty}(\R^n)} = d_{n,0} a_{-}^2.
\]
Combining this with Theorem~\ref{thm:abp global rigidity} and the prescribed asymptotic behavior of $u_{+}$ and $u_{-}$, we conclude that, modulo the addition of (possibly different) linear functions, $u_{+}$ coincides with the function $W_{a_{+}}$ defined in~\eqref{eq:isolated global solution}, while $u_{-}$ coincides with the function $W_{a_{-}}^{*}$ defined in~\eqref{eq:obstacle global solution}. 

In summary, we have $u_+ < \frac{1}{2}|x|^2 < u_-$. 
Moreover, $\operatorname{supp}(\mu)$ is compact. Consequently, since $\{y_{+},y_{-}\}$ is finite, there exists $M>0$ such that 
\[
\{y_+,y_{-}\} \cup \operatorname{supp}(\mu) \subset B_M
\]
Since $u_{+}\leq u\leq u_{-}$ on $\R^{n}$, the strong maximum principle implies that either  $u < u_-$ on $\R^n \setminus B_M$ or $u_+ < u$ on $\R^n \setminus B_M$. 
If $u < u_-$ on $\R^n \setminus B_M$, then the comparison principle yields  
\[
u(x) \leq u_-(x)-\inf_{\partial B_{2M}} (u_--u) \leq \frac{1}{2}|x|^2  +d_{n,0} a_-^2-\inf_{\partial B_{2M}} (u_--u) \quad \text{in } B_{2M}.
\]
Since $y_+ \in B_{2M}$, this contradicts $u(y_-) -\frac{1}{2}|y_-|^2 =d_{n,0} a_{-}^2 $.
If $u_{+}<u$ on $\mathbb{R}^{n}\setminus B_{M}$, 
then the comparison principle yields  
\[
u(x) \geq u_+(x)+\inf_{\partial B_{2M}} (u-u_{+}) \geq \frac{1}{2}|x|^2  -d_{n,0} a_+^2+\inf_{\partial B_{2M}} (u-u_{+}) \quad \text{in } B_{2M},
\]
and we obtain a contradiction with $u(y_+) -\frac{1}{2}|y_+|^2 =-d_{n,0} a_{+}^2 $.
In conclusion, equality in~\eqref{eq:global deviation} cannot be attained.

\medskip

\textbf{Step 3. Sharpness of the constant.} 
Assuming $0<|\mu|(\mathbb{R}^n) < \infty$, we are going to prove that the constant $2^{-\frac{2}{n}} d_{n,0}$ is optimal via a family of solutions consisting of an isolated singularity and an obstacle, in which the distance between the singularity and the coincidence set tends to infinity.

For $\rho>0$, $M\gg \rho^2$, we consider the obstacle problem
\[
\det D^2 u_{M,\rho}=\left(1+\frac{1}{2}\omega_n a^n\delta_0\right)\cdot \chi_{\{ u_{M,\rho}> \rho^2x_n+h_{M,\rho} \}} \quad \text{in } B_{M}, \text{ and }  u_{M,\rho} =\frac{1}{2}|x|^2 \quad \text{on } \partial B_{M},
\]
with $u_{M,\rho}\geq \rho^2x_n+h_{M,\rho}$, where $h_{M,\rho}$ is chosen such that coincidence set 
\[
K_{M,\rho}:=\{ u_{M,\rho}= \rho^2x_n+h_{M,\rho} \}
\] 
satisfies $|K_{M,\rho}|=\frac{1}{2}\omega_n a^n$. 
For $\rho$ and $M$ sufficiently large, the estimate
\[
\left\|u_{M,\rho} - \frac{1}{2}|x|^2\right\|_{L^\infty(B_M)} \le C(n) a^2
\] 
(which follows from Theorem \ref{thm:ordera2}) yields 
\[
\frac{1}{2}|x|^2 -C(n) a^2\leq \rho^2x_n+h_{M,\rho} \leq \frac{1}{2}|x|^2 +C(n) a^2 \quad \text{on } K_{M,\rho},
\]
and a further elementary analysis gives
\[
\left|h_{M,\rho}+\frac{1}{2} \rho^4\right| \leq Ca^2,\quad  K_{M,\rho}\subset B_{Ca}(\rho^2e_n) .
\] 
Passing to the limit  $M \to \infty$ along a subsequence, we obtain an entire convex solution of 
\[
\det D^2 u_\rho= 1+\frac{1}{2}\omega_n a^n\delta_0  -  \chi_{\{ u_{\rho}= \rho^2x_n+h_\rho\}} =:1+\mu_\rho  \quad \text{in } \R^n, \quad u_\rho\geq \rho^2x_n+h_\rho,
\] 
whose coincidence set 
\[
K_{\rho}:=\{ u_\rho= \rho^2x_n+h_\rho \}\subset B_{Ca}(\rho^2e_n)
\]
satisfying $|K_{\rho}|=\frac{1}{2}\omega_n a^n$. 
Using Theorem \ref{thm:abp global rigidity} and adding some constant if needed, we can assume that
\begin{equation*}\label{eq:urhoinfinity}
\limsup_{|x| \to \infty} \left| u_\rho(x) -  \frac{1}{2}|x|^2   \right| = 0.
\end{equation*}

From \eqref{eq:global decay} and since $|\mu_\rho|(B_{\rho/2} (x))=0$ for $x\in \partial B_{\rho}$, we have
\[
\left| u_\rho(x) -  \frac{1}{2}|x|^2   \right|
 \le   C(n)  \frac{a^n}{\rho^{n-2}}, \quad \forall x\in \partial B_{\rho}.
\]
Denote $\tilde{a}=2^{-\frac{1}{n}}a$. The comparison principle then implies 
\[
u_\rho(x) \leq W_{\tilde{a}}(x)-W_{\tilde{a}}(\rho e_n)+\frac{1}{2}\rho^2+C(n)  \frac{a^n}{\rho^{n-2}}\quad \text{in } B_{\rho},
\]
which gives
\begin{equation}\label{eq:lower deviation}
u_{\rho}(0)\leq -d_{n,0} \tilde{a}^2+C(n)  \frac{a^n}{\rho^{n-2}}.
\end{equation}

Similarly, we have
\[
\left| u_\rho(x) -  \frac{1}{2}|x|^2   \right|
 \le   C(n)  \frac{a^n}{\rho^{n-2}}, \quad \forall x\in \partial B_{\rho}(\rho^2e_n).
\]
The comparison principle (see Lemma \ref{lem:comparison principle equal volume}) then implies 
\[
u_\rho(x)-\rho^2 x_n+\frac{1}{2}\rho^4 \geq W_{\tilde{a}}^*(x-\rho^2e_n)-W_{\tilde{a}}^*(\rho e_n)+\frac{1}{2}\rho^2-C(n)  \frac{a^n}{\rho^{n-2}}\quad \text{in } B_{\rho}(\rho^2e_n),
\]
which gives
\begin{equation}\label{eq:upper deviation}
u_{\rho}(\rho^2e_n)-\frac{1}{2}\rho^4\geq d_{n,0} \tilde{a}^2-C(n)  \frac{a^n}{\rho^{n-2}}.
\end{equation}
Combining~\eqref{eq:lower deviation} and~\eqref{eq:upper deviation} with~\eqref{eq:global deviation reduction}, we obtain
\[
\inf_{P\in \mathcal{P}}\left\| u_{\rho}(x) - P(x) \right\|_{L^{\infty}(\R^n)}=\inf_{c\in R}\left\| u_{\rho}(x) - \frac{1}{2}|x|^2-c \right\|_{L^{\infty}(\R^n)} \geq d_{n,0} \tilde{a}^2-C(n)  \frac{a^n}{\rho^{n-2}}.
\]
Letting $\rho\to\infty$, we recover the sharp constant  $d_{n,0} \tilde{a}^2=2^{-\frac{2}{n}}d_{n,0}a^2$.
\end{proof}

\appendix
\counterwithin*{equation}{section}
\renewcommand\theequation{\thesection\arabic{equation}}

\small

\bibliography{references}

\smallskip

\noindent T. Jin

\noindent Department of Mathematics, The Hong Kong University of Science and Technology\\
Clear Water Bay, Kowloon, Hong Kong\\
Email: \textsf{tianlingjin@ust.hk}

\medskip

\noindent X. Tu

\noindent Department of Mathematics, The Hong Kong University of Science and Technology\\
Clear Water Bay, Kowloon, Hong Kong\\[1mm]
Email:  \textsf{maxstu@ust.hk}

\medskip

\noindent J. Xiong

\noindent School of Mathematical Sciences, Laboratory of Mathematics and Complex Systems, MOE\\ Beijing Normal University, 
Beijing 100875, China\\
Email: \textsf{jx@bnu.edu.cn}

\end{document}